\documentclass[12pt, reqno, a4paper]{amsart}
\usepackage{amsmath, amssymb, amsthm, amscd}
\usepackage{extarrows}
\usepackage[T2A, T1]{fontenc}
\usepackage{txfonts}	
\usepackage{eucal}
\usepackage[dvips]{color}
\usepackage{multicol}
\usepackage[all]{xy}		
\usepackage{graphicx}
\usepackage{color}
\usepackage{colordvi}
\usepackage{xspace}
\usepackage{tikz}
\usepackage{enumitem}
\usepackage{ulem}
\usepackage[colorlinks,final,backref=page,hyperindex]{hyperref}

\allowdisplaybreaks 

\newif\ifflabel\flabelfalse
\ifflabel
\else
	
\fi

\newtheorem{theorem}{Theorem}[section]
\newtheorem{lemma}[theorem]{Lemma}
\newtheorem{corollary}[theorem]{Corollary}
\newtheorem{proposition}[theorem]{Proposition}
\theoremstyle{definition}
\newtheorem{definition}[theorem]{Definition}
\newtheorem{example}[theorem]{Example}
\newtheorem{remark}[theorem]{Remark}

\newcommand{\End}{\mathrm{End}}
\newcommand{\Hom}{\mathrm{Hom}}
\newcommand{\id}{\mathrm{id}}

\newcommand{\ad}{\mathrm{ad}}

\newcommand{\courant}[1]{\left\llbracket #1\right\rrbracket }

\newcommand{\delete}[1]{}

\begin{document}
\title[Extended $\mathcal{O}$-operators and extended PYBE]{Post-Poisson algebras, extended $\mathcal{O}$-operators and extended Poisson Yang-Baxter equations}

\author[Y.~Lin]{Yuanchang Lin}
\address{School of Mathematics, North University of China, Taiyuan 030051, China}
\email{linyuanchang@mail.nankai.edu.cn}

\author[D.~Lu]{Dilei Lu$^\ast$}
\address{College of Applied Science, Beijing Information Science and Technology University, Beijing 100192, China}
\email{ludyray@bistu.edu.cn}

\thanks{$^*$Corresponding author.}

\subjclass[2020]{
	16T25, 
	17A30, 
	17A60, 
	17B38, 
	17B62, 
	17B63. 
}

\keywords{Poisson algebras;
	post-Poisson algebras;
	extended $\mathcal{O}$-operators; 
	extended Poisson Yang-Baxter equation; 
	Poisson bialgebras.
}

\date{\today}

\begin{abstract}
	This paper introduces the extended $\mathcal{O}$-operators on Poisson algebras as a natural generalization of ordinary $\mathcal{O}$-operators, together with the extended Poisson Yang-Baxter equations. 
	We show that $\mathcal{O}$-operators of  weight $\lambda$ on Poisson algebras give rise to post-Poisson algebras, whose operad are the trisuccessor of the operad of Poisson algebras, and that extended $\mathcal{O}$-operators induce new Poisson algebra structures on module spaces.
	Equivalent characterizations of extended $\mathcal{O}$-operators are obtained via the symmetrizer-antisymmetrizer decomposition.
	The generalized Poisson Yang-Baxter equations are also introduced, and their connections with coboundary Poisson bialgebras and extended $\mathcal{O}$-operators are established. 
	The tensor form of extended $\mathcal{O}$-operators leads to the notion of the extended Poisson Yang-Baxter equations, which generalizes the notion of the Poisson Yang-Baxter equations. 
	Finally, the relationships among extended $\mathcal{O}$-operators, the extended Poisson Yang-Baxter equations, and the Poisson Yang-Baxter equations are studied in the framework of quadratic Poisson algebras and semi-direct product Poisson algebras.
\end{abstract}

\maketitle

\tableofcontents

 
\section{Introduction}\label{sec:intro}

The purpose of this paper is to establish extended structures for Poisson algebras, namely extended $\mathcal{O}$-operators and the extended Poisson Yang-Baxter equations, following the frameworks of extended $\mathcal{O}$-operators on Lie algebras~\cite{bai2010nonabelian} and associative algebras~\cite{bai2012O}.

A Poisson algebra is a vector space equipped with a commutative associative multiplication and a Lie bracket satisfying the Leibniz rule.
Poisson algebras serve as fundamental structures in various areas of mathematics and mathematical physics, including Poisson geometry \cite{vaisman2012lectures, weinstein1977lectures}, classical and quantum mechanics \cite{arnol2013mathematical, dirac2013lectures, odzijewicz2011hamiltonian}, algebraic geometry \cite{ginzburg2004poisson, polishchuk1997algebraic}, quantization theory \cite{huebschmann1990poisson, kontsevich2003deformation}, integrable systems and  quantum groups \cite{chari1995guide, drinfeld1986quantum}.
The bialgebra theory for Poisson algebras was introduced in~\cite{ni2013poisson} as a Poisson analogue of Drinfeld's Lie bialgebras~\cite{drinfeld1983hamiltonian}.
It is known that the classical Yang-Baxter equation (CYBE) and the associative Yang-Baxter equation (AYBE) are the fundamental Yang-Baxter-type equations governing Lie bialgebras and antisymmetric infinitesimal bialgebras, respectively~\cite{bai2010nonabelian, bai2012O}.
Furthermore, in the theory of Poisson bialgebras, a solution of the Poisson Yang-Baxter equation (PYBE), which combines the CYBE together with the AYBE under an invariant condition, gives rise to a coboundary Poisson bialgebra~\cite{lin2026quasitriangular, ni2013poisson}.

The notion of $\mathcal{O}$-operators (also known as Rota-Baxter operators of weight zero in certain contexts) provides a systematic method to construct solutions of Yang-Baxter-type equations~\cite{bai2010nonabelian, bai2011generalizations,bai2012O}.
For Lie algebras, extended $\mathcal{O}$-operators were introduced in~\cite{bai2010nonabelian} in order to study double Lie algebra structures and nonabelian generalized Lax pairs.
It was shown that extended $\mathcal{O}$-operators on Lie algebras are generalizations of Rota-Baxter operators and ordinary $\mathcal{O}$-operators~\cite{bai2011generalizations,kupershmidt1999classical,semenov1983classical}, and the relationship between extended $\mathcal{O}$-operators and the (extended) classical Yang-Baxter equations was presented.
Similar results for associative algebras and Novikov algebras were obtained in~\cite{bai2012O, yu2026extended}. 
These developments motivate our investigation of analogous extended structures in the Poisson algebra context.

On the other hand, the operad of post-Poisson algebras is the trisuccessor of the Poisson operad~\cite{bai2013splitting}, which is isomorphic to the Manin black product of the Poisson operad and the post-Lie operad~\cite{bai2013splitting,vallette2007Maninp}. Consequently, there is a natural construction of post-Poisson algebras from Poisson algebras equipped with Rota-Baxter operators of weights \cite{ni2013poisson}. 
Dotsenko's monadic approach to PBW-type theorems facilitates the use of operadic techniques, previously employed in homotopical algebra primarily, and proves the Koszulness of the post-Poisson operad ~\cite{dotsenko2021endofunctors}. 
Post-Poisson algebras have been shown to be quasi-classical limits of tridendriform algebra deformations of  commutative tridendriform algebras, extending the classic result that Poisson algebras are quasi-classical limits of associative algebra deformations of commutative associative algebras~\cite{chen2024postpoisson,ospel2022polarization}. 
Collectively, these works bridge deformation theory and  operadic algebras, establishing post-Poisson algebras as a significant and active research area.
The present paper also establishes relationships between $\mathcal{O}$-operators and post-Poisson algebras.

In this paper, we demonstrate the feasibility of extending the aforementioned framework to Poisson algebras.
More precisely, we first recall the notion of $A$-module Poisson algebras, which simultaneously encode the action of $A$ on $V$ via the Lie bracket and the commutative multiplication, together with the Poisson algebra structure on $V$ itself.
From such data, we define $\mathcal{O}$-operators of weight $\lambda$ on Poisson algebras and show that every $\mathcal{O}$-operator of weight $\lambda$ gives rise to a post-Poisson algebra structure. 
We give a necessary and sufficient condition under which a Poisson algebra admits a post-Poisson algebra structure whose associated Poisson algebra is the Poisson algebra itself.

We then introduce extended $\mathcal{O}$-operators on Poisson algebras.
Such an operator consists of a linear map $T:V\to A$ together with an extension $S:V \to A$ which may satisfy certain balanced, $A$-invariant and equivalent conditions, such that the failure of $T$ to be an $\mathcal{O}$-operator is controlled by $S$ and the mass parameters.
 Moreover, in the present paper, $T$ and the extension $S$ are shown to be correspond to the skew-symmetric and symmetric part of solutions of the PYBE, respectively, via the symmetrizer-antisymmetrizer decomposition.
Through this operator perspective, we can characterize solutions of the PYBE whose symmetric part is nonzero and $(\ad, L)$-invariant, thereby generalizing the classical result that \(\mathcal{O}\)-operators characterize skew-symmetric solutions of the Poisson Yang-Baxter equation (i.e., those with zero symmetric part).  
We show that extended $\mathcal{O}$-operators induce new Poisson algebra structures on module spaces, and provide equivalent descriptions of them in terms of homomorphisms of Poisson algebras and ordinary $\mathcal{O}$-operators via the symmetrizer-antisymmetrizer decomposition. 
Moreover, the notion of generalized Poisson Yang-Baxter equations (GPYBE) is also introduced, which arises naturally in the context of coboundary Poisson bialgebras.
Their relationship with extended $\mathcal{O}$-operators is then established.

We investigate the tensor form of extended $\mathcal{O}$-operators, which leads to the notion of the extended Poisson Yang-Baxter equation (EPYBE) of mass, as a special case of the GPYBE under the invariant condition.
When the symmetric part of the solution of the EPYBE is $(\ad,L)$-invariant, we establish a correspondence between extended $\mathcal{O}$-operators and solutions of the EPYBE.
In particular, for a suitable choice of the mass parameter, this correspondence gives the characterization of solutions of the PYBE in terms of ordinary $\mathcal{O}$-operators. Then we further study the extended $\mathcal{O}$-operators and the EPYBE in the framework of quadratic Poisson algebras and semi-direct product Poisson algebra. For a quadratic Poisson algebra, the nondegenerate bilinear form yields an identification between the underlying Poisson algebras and its dual. We demonstrate that an extended $\mathcal{O}$-operator on this Poisson algebra associated to the adjoint module corresponds to a solution of the EPYBE via such a nondegenerate bilinear form. Furthermore, the semi-direct product Poisson algebra establishes a correspondence between extended $\mathcal{O}$-operators on Poisson algebras and solutions of the EPYBE, generalizing the relationship between $\mathcal{O}$-operators and solutions of the PYBE.

The paper is organized as follows.
In Section~\ref{sec:o}, we recall the notions of $A$-module Poisson algebras, post-Poisson algebras and $\mathcal{O}$-operators, and introduce extended $\mathcal{O}$-operators on Poisson algebras.
The relationship between $\mathcal{O}$-operators and post-Poisson algebras is established.
Moreover, we show that extended $\mathcal{O}$-operators induce new Poisson algebra structures, and give equivalent characterization of extended $\mathcal{O}$-operators via the symmetrizer-antisymmetrizer decomposition.
In section~\ref{sec:tensor}, we first focus on the GPYBE and their connections with extended $\mathcal{O}$-operators and coboundary Poisson bialgebras. Then we introduce the EPYBE, establish the equivalence between extended $\mathcal{O}$-operators and solutions of the EPYBE under certain conditions, and study the EPYBE on quadratic Poisson algebras and semi-direct product Poisson algebras.

Throughout this paper, we work over a base field $\mathbf{k}$ of characteristic $0$, and all vector spaces and algebras are assumed to be finite-dimensional.
We adopt the following conventions and notations.
\begin{enumerate}
	\item[(i)]
	   Let $(A, \diamond)$ be a vector space equipped with a binary operation $\diamond: A \otimes A \rightarrow A$.
	   Let $L_{\diamond}(a)$ and $R_{\diamond}(a)$ denote the left and right multiplication operators, that is
	   \begin{equation*}
		   L_{\diamond}(a) b = R_{\diamond}(b) a = a \diamond b, \;\; \forall a, b \in A.
	   \end{equation*}
	   We also simply denote them by $L(a)$ and $R(a)$ respectively without confusion.
	   In particular, if $(A, [\ ,\ ])$ is a Lie algebra, we let $\ad_{[,]}(a) = \ad(a)$ denote the adjoint operator, that is
	   \begin{equation*}
		   \ad_{[,]}(a) b = \ad(a) b = [a, b], \;\; \forall a, b \in A.
	   \end{equation*}

	\item[(ii)]
	   Let $V$ be a vector space.
	   Denote the flip operator by $\tau: V \otimes V \rightarrow V \otimes V$, which is defined by
	   \begin{equation*}
		   \tau(u \otimes v) = v \otimes u, \;\; \forall u, v \in V.
	   \end{equation*}

	\item[(iii)]
	   Let $(A, \diamond)$ be a vector space equipped with a binary operation $\diamond: A \otimes A \rightarrow A$.
	   Let $r = \sum_{i} a_i \otimes b_i \in A \otimes A$.
	   Set
	   \begin{align*}
		   r_{12} = \sum_{i} a_i \otimes b_i \otimes 1, \;\;
		   r_{13} = \sum_{i} a_i \otimes 1 \otimes b_i, \;\;
		   r_{23} = \sum_{i} 1 \otimes a_i \otimes b_i, \\
		   r_{21} = \sum_{i} b_i \otimes a_i \otimes 1, \;\;
		   r_{31} = \sum_{i} b_i \otimes 1 \otimes a_i, \;\;
		   r_{32} = \sum_{i} 1 \otimes b_i \otimes a_i,
	   \end{align*}
	   where $1$ is the unit if $(A, \diamond)$ is unital or a symbol playing a similar role as the unit for the non-unital cases.
	   Furthermore, define compound symbols such as $r_{12} \diamond r_{13}$ by
	   \begin{equation*}
		   r_{12} \diamond r_{13} = \sum_{i, j} a_i \diamond a_j \otimes b_i \otimes b_j.
	   \end{equation*}

	\item[(iv)]
	   Denote the standard pairing between the dual space $V^*$ and $V$ by
	   \begin{equation*}
		   \langle \ ,\ \rangle: V^* \times V \rightarrow \mathbf{k}, \;\; \langle f, v \rangle := f(v), \;\; \forall f \in V^*, \; v \in V.
	   \end{equation*}

	\item[(v)]
	   Let $V, W$ be two vector spaces and $T: V \rightarrow W$ be a linear map.
	   Denote the dual map by $T^*: W^* \rightarrow V^*$, which is defined by
	   \begin{equation*}
		   \langle T^*(w^*), v \rangle = \langle w^*, T(v) \rangle, \;\; \forall v \in V, \; w^* \in W^*.
	   \end{equation*}

	\item[(vi)]
	   Let $A, V$ be vector spaces.
	   For a linear map $\zeta: A \rightarrow \End_{\mathbf{k}}(V)$, define a linear map $\zeta^*: A \rightarrow \End_{\mathbf{k}}(V^*)$ by $\zeta^*(a) = (\zeta(a))^*$, or more explicitly,
	   \begin{equation*}
		   \langle \zeta^*(a)v^*, u\rangle = \langle v^*, \zeta(a) u\rangle, \;\; \forall a \in A, \; u \in V, \; v^* \in V^*.
	   \end{equation*}
\end{enumerate}

\section{\texorpdfstring{$A$}{A}-module Poisson algebras, post-Poisson algebras and extended \texorpdfstring{$\mathcal{O}$}{O}-operators}\label{sec:o}
In this section, we recall the notions of $A$-module Poisson algebras, post-Poisson algebras and $\mathcal{O}$-operators, and introduce the concept of extended $\mathcal{O}$-operators as generalization of $\mathcal{O}$-operators.
We then present the relationship between post-Poisson algebras and $\mathcal{O}$-operators.
Furthermore, we show that extended $\mathcal{O}$-operators induce new Poisson algebra structures and give an equivalent characterization of extended $\mathcal{O}$-operators under certain conditions.

\subsection{\texorpdfstring{$A$}{A}-module Poisson algebras}\label{ss:pmpa}

\begin{definition}
	A \textbf{Poisson algebra} is a triple $(A, [\,,\,], \cdot)$, where $(A, [\,,\,])$ is a Lie algebra and $(A, \cdot)$ is a commutative associative algebra satisfying
	\begin{equation*}
		[a, b \cdot c] = [a, b] \cdot c + b \cdot [a, c], \;\; \forall a, b, c \in A.
	\end{equation*}
\end{definition}

\begin{definition}\cite{ni2013poisson}
	Let $(A, [\,,\,], \cdot)$ be a Poisson algebra, $V$ be a vector space and $\rho, \zeta: A \rightarrow \End_{\mathbf{k}}(V)$ be two linear maps.
	Then $(V, \rho, \zeta)$ is called a {\bf module} of $(A, [\,,\,], \cdot)$ if $(V, \rho)$ is a module of $(A, [\,,\,])$ and $(V, \zeta)$ is a module of $(A, \cdot)$ such that
	\begin{equation}
		\rho(a \cdot b) = \zeta(b) \rho(a) + \zeta(a) \rho(b), \;\;
		\zeta([a, b]) = \rho(a) \zeta(b) - \zeta(b) \rho(a), \;\;
		\forall a, b \in A. \label{eq:repp}
	\end{equation}
	An {\bf $A$-module homomorphism} between modules $(V_{1}, \rho_{1}, \zeta_{1})$ and $(V_{2}, \rho_{2}, \zeta_{2})$ of $(A, [\,,\,], \cdot)$ is a linear map $\varphi: V_{1} \rightarrow V_{2}$ such that
	\begin{equation*}
		\varphi(\rho_{1}(a)v) = \rho_{2}(a)(\varphi(v)), \;\;
		\varphi(\zeta_{1}(a)v) = \zeta_{2}(a)(\varphi(v)), \;\;
		\forall a \in A, \; v \in V_{1}.
	\end{equation*}
\end{definition}

Note that if $(V, \rho, \zeta)$ is a module of $(A, [\,,\,], \cdot)$, then $(V^*, -\rho^*, \zeta^*)$ is also a module of $(A, [\,,\,], \cdot)$.

\begin{example}
	Let $(A, [\,,\,], \cdot)$ be a Poisson algebra.
	Then both $(A, \ad_{[,]}, L_{\cdot})$ and $(A^*, -\ad_{[,]}^*, L_{\cdot}^*)$ are modules of $(A, [\,,\,], \cdot)$, called the {\bf adjoint} and {\bf coadjoint} module of $(A, [\,,\,], \cdot)$, respectively. 
\end{example}

\begin{definition}[\cite{ni2013poisson}]\label{def:module}
	Let $(A, [\,,\,], \cdot)$ and $(V, [\,,\,]_V, \cdot_V)$ be Poisson algebras, and $\rho, \zeta: A \rightarrow \End_{\mathbf{k}}(V)$ be two linear maps.
	The quintuple $(V, [\,,\,]_V, \cdot_V, \rho, \zeta)$ is called an \textbf{$(A, [\,,\,], \cdot)$-module Poisson algebra} (and \textbf{$A$-module Poisson algebra} for short) if $(V, \rho, \zeta)$ is a module of $(A, [\,,\,], \cdot)$ and the following equalities hold for all $a \in A$ and $u, v \in V$:
	\begin{align}
		\rho(a)([u, v]_V)  & = [\rho(a)u, v]_V + [u, \rho(a)v]_V, \quad
		\zeta(a)(u \cdot_V v) = (\zeta(a)u) \cdot_V v, \label{eq:pmc}       \\
		\rho(a)(u \cdot_V v) & = (\rho(a)u) \cdot_V v + u \cdot_V (\rho(a)v), \;\;
		[u, \zeta(a)v]_V = -(\rho(a)u) \cdot_V v + \zeta(a)([u, v]_V). \label{eq:pmd}
	\end{align}
\end{definition}

\begin{proposition}[\cite{ni2013poisson}]
	Let $(A, [\,,\,], \cdot)$ be a Poisson algebra, $V$ be a vector space with binary operations $[\,,\,]_V$ and $\cdot_V$, and $\rho, \zeta: A \rightarrow \End_{\mathbf{k}}(V)$ be two linear maps.
	Define binary operations $\courant{\,,\,}$ and $\bullet$ on $A \oplus V$ by (for all $a, b \in A$ and $u, v \in V$)
	\begin{align*}
		\courant{a + u, b + v} & := [a, b] + \rho(a) v - \rho(b) u + [u, v]_V,     \\
		(a + u) \bullet (b + v) & := a \cdot b + \zeta(a) v + \zeta(b) u + u \cdot_V v.
	\end{align*}
	Then $(V, [\,,\,]_V, \cdot_V, \rho, \zeta)$ is an $A$-module Poisson algebra if and only if $(A \oplus V, \courant{\,,\,}, \bullet)$ is a Poisson algebra, which is called the \textbf{semi-direct product of $(A, [\,,\,], \cdot)$ and $(V, [\,,\,]_V, \cdot_V, \rho, \zeta)$} and denoted by $(A \ltimes_{\rho, \zeta} V, \courant{\,,\,}, \bullet)$.
\end{proposition}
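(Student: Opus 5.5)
The plan is a direct verification. We split the Poisson algebra axioms for $(A\oplus V,\courant{\,,\,},\bullet)$ into their components along $A$ and $V$, and compare each component with the defining identities of an $A$-module Poisson algebra. All three structures (Lie bracket, commutative associative product, Leibniz rule) are multilinear. So it suffices to check each axiom on homogeneous arguments, each argument lying in $A$ or in $V$, and to sort the resulting identities by how many arguments lie in $V$.

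\emph{Lie part.} Skew-symmetry of $\courant{\,,\,}$ is immediate from its form. For the Jacobi identity:
\begin{itemize}
\item three arguments in $A$: this is Jacobi for $[\,,\,]$;
\item two in $A$, one in $V$: this is exactly $\rho([a,b])=[\rho(a),\rho(b)]$;
\item one in $A$, two in $V$: this is the derivation rule $\rho(a)[u,v]_V=[\rho(a)u,v]_V+[u,\rho(a)v]_V$;
\item three in $V$: this is Jacobi in $V$.
\end{itemize}
So $(A\oplus V,\courant{\,,\,})$ is a Lie algebra iff $(V,\rho)$ is an $A$-module, $(V,[\,,\,]_V)$ is Lie, and the first identity in \eqref{eq:pmc} holds. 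This is the standard semidirect product statement for Lie algebras.

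\emph{Commutative associative part.} Commutativity is clear. For associativity:
\begin{itemize}
\item $(a,b,v)$-type terms give $\zeta(a\cdot b)=\zeta(a)\zeta(b)$, using commutativity;
\item $(a,u,v)$-type terms give $\zeta(a)(u\cdot_V v)=(\zeta(a)u)\cdot_V v$, together with its mirror $u\cdot_V\zeta(a)v=\zeta(a)(u\cdot_Vv)$, which follows by commutativity of $\cdot_V$;
\item $(u,v,w)$ gives associativity in $V$.
\end{itemize}

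\emph{Leibniz rule.} The main bookkeeping is the Leibniz rule $\courant{x,y\bullet z}=\courant{x,y}\bullet z+y\bullet\courant{x,z}$. It is not symmetric in $x$ versus $(y,z)$, so all positions must be treated separately. The cases are:
\begin{itemize}
\item $x=a$, $y=b$, $z=v$: this yields $\rho(a)\zeta(b)=\zeta([a,b])+\zeta(b)\rho(a)$, the second identity of \eqref{eq:repp};
\item $x=u$, $y=a$, $z=b$: this yields $-\rho(a\cdot b)u=-\zeta(b)\rho(a)u-\zeta(a)\rho(b)u$, the first identity of \eqref{eq:repp};
\item $x=a$, $y=u$, $z=v$: this yields the first identity of \eqref{eq:pmd};
\item $x=u$, $y=a$, $z=v$: this yields $[u,\zeta(a)v]_V=-(\rho(a)u)\cdot_Vv+\zeta(a)[u,v]_V$, the second identity of \eqref{eq:pmd};
\item $x=u$, $y=v$, $z=a$: this reduces to the previous case by commutativity;
\item $x=u$, $y=v$, $z=w$: this is Leibniz in $V$;
\item all arguments in $A$: this is Leibniz in $A$.
\end{itemize}
Reading each equivalence in both directions gives the ``if and only if'' of the proposition.

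The only real obstacle is careful sign tracking in the mixed Leibniz cases, especially $x=u$, $y=a$, $z=v$. There the $A$-component of each side vanishes, and one must correctly collect the terms $-\rho(a)u$ coming from $\courant{u,a}$ together with $\zeta(a)[u,v]_V$. I would verify that case explicitly and state the others as analogous.
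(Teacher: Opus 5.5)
The paper does not prove this proposition; it quotes it from \cite{ni2013poisson}. Your direct component-by-component check is the standard argument and it is correct. Each mixed case yields exactly the corresponding identity in \eqref{eq:repp}, \eqref{eq:pmc} or \eqref{eq:pmd}, and reading the equivalences backwards gives the converse.

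Two small points should be tightened:
\begin{enumerate}
\item Skew-symmetry of $\courant{\,,\,}$ and commutativity of $\bullet$ are not automatic, since $[\,,\,]_V$ and $\cdot_V$ are arbitrary operations. They are equivalent to skew-symmetry of $[\,,\,]_V$ and commutativity of $\cdot_V$. You need this for the converse direction. You also need it to read the Jacobi case with one argument in $A$ as the derivation rule, because that case directly gives $[\rho(a)u,v]_V-[\rho(a)v,u]_V=\rho(a)[u,v]_V$.
\item The Leibniz case $x=a$, $y=v$, $z=b$ is missing from your list. It reduces to $x=a$, $y=b$, $z=v$ by commutativity of $\bullet$, just as your case $x=u$, $y=v$, $z=a$ does.
\end{enumerate}
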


\begin{example}
	Let $(A, [\,,\,], \cdot)$ be a Poisson algebra. Then
	\begin{enumerate}
		\item[(i)]
		   $(A, [\,,\,], \cdot, \ad_{[,]}, L_{\cdot})$ is an $A$-module Poisson algebra.

		\item[(ii)]
		   A module of $(A, [\,,\,], \cdot)$ is equivalent to an $A$-module Poisson algebra equipped with the trivial Poisson algebra structure.
		   In the sequel, we will adopt this perspective for convenience and always regard a module of $(A, [\,,\,], \cdot)$ as such an $A$-module Poisson algebra.
	\end{enumerate}
\end{example}

\subsection{\texorpdfstring{$\mathcal{O}$}{O}-operators and post-Poisson algebras}
Recall the notion of $\mathcal{O}$-operations, motivated by the corresponding notions in the context of Lie and associative algebras \cite{bai2010nonabelian, bai2012O}.
\begin{definition}\label{def:oo}
	Let $(A, [\,,\,], \cdot)$ be a Poisson algebra, $(V, [\,,\,]_V, \cdot_V, \rho, \zeta)$ be an $A$-module Poisson algebra and $\lambda \in \mathbf{k}$.
	A linear map $T: V \rightarrow A$ is called an {\bf $\mathcal{O}$-operator} of weight $\lambda$ on $(A, [\,,\,], \cdot)$ associated to $(V, [\,,\,]_V, \cdot_V, \rho, \zeta)$ if for all $u, v \in V$
	\begin{align*}
		[T(u),T(v)]   & = T\big( \rho(T(u))v - \rho(T(v))u + \lambda [u, v]_V \big),   \\
		T(u) \cdot T(v) & = T\big( \zeta(T(u))v + \zeta(T(v))u + \lambda u \cdot_V v \big).
	\end{align*}
	An $\mathcal{O}$-operator of weight $\lambda$ on $(A, [\,,\,], \cdot)$ associated to $(A, [\,,\,], \cdot, \ad_{[,]}, L_\cdot)$ is called a {\bf Rota-Baxter operator} of weight $\lambda$ on $(A, [\,,\,], \cdot)$.
\end{definition}

\begin{remark}
	When $\lambda \neq 0$, the following statements are equivalent.
	\begin{enumerate}
		\item[(i)]
		   $T$ is an $\mathcal{O}$-operator of weight $\lambda$ on $(A, [\,,\,], \cdot)$ associated to $(V, [\,,\,]_V, \cdot_V, \rho, \zeta)$.

		\item[(ii)]
		   $\frac{T}{\lambda}$ is an $\mathcal{O}$-operator of weight $1$ on $(A, [\,,\,], \cdot)$ associated to $(V, [\,,\,]_V, \cdot_V, \rho, \zeta)$.

		\item[(iii)]
		   $T$ is an $\mathcal{O}$-operator of weight $1$ on $(A, [\,,\,], \cdot)$ associated to $(V, \lambda [\,,\,]_V, \lambda \cdot_V, \rho, \zeta)$.
	\end{enumerate}
\end{remark}

We show that, as a generalization of the Rota-Baxter operator, an $\mathcal{O}$-operator is equivalent to a Rota-Baxter operator on a larger Poisson algebra.

 \begin{proposition}
	Let $(A, [\,,\,], \cdot)$ be a Poisson algebra, $(V, [\,,\,]_V, \cdot_V, \rho, \zeta)$ be an $A$-module Poisson algebra and $T: V \rightarrow A$ be a linear map. 
	Let $(\mathfrak{A}=A \ltimes_{\rho, \zeta} V, \courant{\,,\,}, \bullet)$ be the semi-direct product of $(A, [\,,\,], \cdot)$ and $(V, [\,,\,]_V, \cdot_V, \rho, \zeta)$. For a given $ \lambda \in \mathbf{k}$, the
following statements are equivalent.
\begin{enumerate}[label=(\roman*)]
	\item\label{Rla1} 
	$T$ is an $\mathcal{O}$-operator of weight $\lambda$ on $(A, [\,,\,], \cdot)$ associated to $(V, [\,,\!\,]_V, \cdot_V, \rho, \zeta)$.
	
	\item\label{Rla2}  
	The linear map $\hat{T}$ given by 
	\begin{equation*}
		\hat{T}: \mathfrak{A} \to \mathfrak{A},\quad a + u \mapsto -\lambda a + T(u), \quad \forall a \in A, u \in V,
	\end{equation*}
	is a Rota-Baxter operator of weight $\lambda$ on $(\mathfrak{A}, \courant{\,,\,}, \bullet)$.
	
	\item\label{Rla3} 	
	The linear map $ \check{T}:=-\lambda \id - \hat{T}$ given by 
	\begin{equation*}
	\check{T}: \mathfrak{A} \to \mathfrak{A},\quad a + u \mapsto -T(u) - \lambda u, \quad \forall a \in A, u \in V,
	\end{equation*} 
	is a Rota-Baxter operator of weight $\lambda$ on $(\mathfrak{A}, \courant{\,,\,}, \bullet)$.
\end{enumerate}	
 \end{proposition}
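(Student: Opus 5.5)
The plan is to prove \ref{Rla1}$\Leftrightarrow$\ref{Rla2} by direct computation in the semi-direct product $(\mathfrak{A}, \courant{\,,\,}, \bullet)$, and then to deduce \ref{Rla2}$\Leftrightarrow$\ref{Rla3} from a general fact about Rota-Baxter operators. Recall that $R$ is a Rota-Baxter operator of weight $\lambda$ on $\mathfrak{A}$ if, for all $x,y\in\mathfrak{A}$,
\begin{align*}
\courant{R(x),R(y)} &= R\big(\courant{R(x),y}+\courant{x,R(y)}+\lambda\courant{x,y}\big),\\
R(x)\bullet R(y) &= R\big(R(x)\bullet y+x\bullet R(y)+\lambda\, x\bullet y\big).
\end{align*}

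\textbf{Step 1: \ref{Rla1}$\Leftrightarrow$\ref{Rla2}.} Both sides of each identity are bilinear in $(x,y)$. For the bracket identity both sides are also skew-symmetric, and for the product identity both sides are symmetric. It therefore suffices to check three cases: $x=a, y=b\in A$; $x=a\in A, y=v\in V$; and $x=u, y=v\in V$.
\begin{itemize}
\item For $a,b\in A$: the left side of the bracket identity is $\lambda^2[a,b]$. The right side is $\hat T(-\lambda[a,b]-\lambda[a,b]+\lambda[a,b])=\hat T(-\lambda[a,b])=\lambda^2[a,b]$. So the identity holds automatically, and likewise for $\bullet$.
\item For $a\in A$, $v\in V$: the left side is $\courant{-\lambda a,T(v)}=-\lambda[a,T(v)]$. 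The right side is $\hat T(-\lambda\rho(a)v+[a,T(v)]+\lambda\rho(a)v)=\hat T([a,T(v)])=-\lambda[a,T(v)]$, again automatic. The same cancellation of $\mp\lambda\zeta(a)v$ happens for $\bullet$.
\item For $u,v\in V$: the left sides are $[T(u),T(v)]$ and $T(u)\cdot T(v)$. The arguments on the right sides are $\rho(T(u))v-\rho(T(v))u+\lambda[u,v]_V$ and $\zeta(T(u))v+\zeta(T(v))u+\lambda u\cdot_V v$. Both lie in $V$, so $\hat T$ acts on them as $T$. The two identities then become exactly the defining identities of an $\mathcal{O}$-operator of weight $\lambda$ in Definition~\ref{def:oo}.
\end{itemize}
Hence $\hat T$ is a Rota-Baxter operator of weight $\lambda$ if and only if $T$ is an $\mathcal{O}$-operator of weight $\lambda$.

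\textbf{Step 2: \ref{Rla2}$\Leftrightarrow$\ref{Rla3}.} I would prove the standard lemma that, for any bilinear operation $\circ$ on a vector space, $R$ satisfies $R(x)\circ R(y)=R(R(x)\circ y+x\circ R(y)+\lambda x\circ y)$ if and only if $R'=-\lambda\id-R$ does. To see this, substitute $R=-\lambda\id-R'$ and expand both sides. The left side becomes $\lambda^2 x\circ y+\lambda(R'(x)\circ y+x\circ R'(y))+R'(x)\circ R'(y)$. On the right side the argument becomes $-R'(x)\circ y-x\circ R'(y)-\lambda x\circ y$, and applying $-\lambda\id-R'$ to it gives the same $\lambda$- and $\lambda^2$-terms plus $R'(R'(x)\circ y+x\circ R'(y)+\lambda x\circ y)$. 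After cancellation one obtains the Rota-Baxter identity for $R'$. The argument is symmetric, so the converse holds too.

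Applying this lemma to both $\courant{\,,\,}$ and $\bullet$, and using $\check T=-\lambda\id-\hat T$, gives \ref{Rla2}$\Leftrightarrow$\ref{Rla3}. There is no real obstacle; the only care needed is the bookkeeping in the mixed case of Step 1, where the terms $\pm\lambda\rho(a)v$ and $\pm\lambda\zeta(a)v$ must be seen to cancel.
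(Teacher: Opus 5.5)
Your proposal is correct and follows essentially the same route as the paper. For (i)$\Leftrightarrow$(ii), the paper does a single computation on general elements $a+u$, $b+v$, showing that the Rota-Baxter defect of $\hat T$ reduces to the $\mathcal{O}$-operator defect of $T$ on $V$; your three-case check via bilinearity and (skew-)symmetry establishes the same thing. For (ii)$\Leftrightarrow$(iii), the paper simply cites the well-known fact that $P$ is a Rota-Baxter operator of weight $\lambda$ if and only if $-\lambda\id-P$ is, which you additionally prove.
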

 \begin{proof}
	\ref{Rla1} $\Longleftrightarrow$ \ref{Rla2}. 
	Let $a,b \in A$ and $u,v \in V$. 
	Then we have
	\begin{align*}
		\courant{\hat{T}(a + u),\hat{T}(b + v)} &= [-\lambda a \!+\! T(u),-\lambda b \!+\! T(v)] =\lambda^2[a,b]  \!- \!\lambda [a,T(v)]  \! - \!\lambda [T(u),b]  \!+\! [T(u) , T(v)] ,\\
		\hat{T}\courant{ \hat{T}(a+u), b+v } &=  \hat{T}([T(u),b]  - \lambda[a,b]  - \lambda \rho(a)v + \rho(T(u))v)\\
		&= - \lambda[T(u),b]  + \lambda^2[a,b]  - \lambda T(\rho(a)v) + T(\rho(T(u))v),\\
		\hat{T}\courant{ a+u,\hat{T}(b+v) } &=  \hat{T}([a,T(v)]  - \lambda[a,b]  + \lambda \rho(b)u - \rho(T(v))u)\\
		&=  - \lambda[a,T(v)]  + \lambda^2[a,b]  + \lambda T(\rho(b)u) - T(\rho(T(v))u),\\
		\lambda\hat{T}\courant{a+u, b+v} &= -\lambda^2[a,b] + \lambda T(\rho(a)v ) - \lambda T(\rho(b)u ) + \lambda T([u,v]_V).
\end{align*}
Hence,
\begin{align*}
	&\courant{\hat{T}(a + u),\hat{T}(b+v)} - \hat{T} \big(\courant{ \hat{T}(a+u), b+v } + \courant{ a+u, \hat{T}(b+v) } + \lambda\courant{a+u, b+v} \big) \\
	&= [T(u), T(v)] - T(\rho(T(u)) v - \rho(T(v))u + \lambda [u, v]_V).
\end{align*}
Similarly, we also have
\begin{align*} 
	&\hat{T}(a+u) \bullet \hat{T}(b+v) - \hat{T}( \hat{T}(a+u)\bullet (b+v)  +  (a+u) \bullet \hat{T}(b+v)  + \lambda  (a+u)\bullet (b+v)) \\
	&=	T(u) \cdot T(v) - T\big( \zeta(T(u))v + \zeta(T(v))u + \lambda u \cdot_V v \big).
\end{align*}
Thus, $T$ is an $\mathcal{O}$-operator of weight $\lambda$ on $(A, [\,,\,], \cdot)$ associated to $(V, [\,,\!\,]_V, \cdot_V, \rho, \zeta)$ if and only if $\hat{T}$ is a Rota-Baxter operator of weight $\lambda$ on $(\mathfrak{A}, \courant{\,,\,}, \bullet)$.
 
\ref{Rla2} $\Longleftrightarrow$ \ref{Rla3}. 
This follows from the well-known fact that $P : A \to A$ is a Rota-Baxter operator of weight $\lambda$ on a Poisson algebra $(A, [\,,\,], \cdot)$ if and only if $-\lambda \id-P : A \to A$ is a Rota-Baxter operator of weight $\lambda$.
\end{proof}

Next we recall the notion of post-Poisson algebras, which are the Poisson analogue of the dendriform trialgebras~\cite{loday2004trialgebras}.

\begin{definition}[\cite{vallette2007homology}]
	A {\bf (left) post-Lie algebra} is triple $(A, \{\,,\,\}, \star)$, where $(A, \{\,,\,\})$ is a Lie algebra and $\star$ is a binary operation satisfying the following equalities for all $a, b, c \in A$:
	\begin{align}
		c \star \{a, b\} - \{c \star a, b\} - \{a, c \star b\}                           & = 0, \label{postLie1}\\
		 c \star (b \star a) - b \star (c \star a) + (b \star c) \star a - (c \star b) \star a + \{b, c\} \star a & = 0. \label{postLie2} 
	\end{align}
\end{definition}

\begin{definition}[\cite{loday2007algebra}]
	A {\bf (left) commutative dendriform trialgebra} is a triple $(A, \circ, \succ)$, where $(A, \circ)$ is a commutative associative algebra and $\succ$ is a binary operation satisfying the following equalities for all $a, b, c \in A$:
	\begin{align}
		(a \succ b + b \succ a + a \circ b) \succ c & = a \succ (b \succ c), \label{comtri1}\\
		(a \succ b) \circ c             & = a \succ (b \circ c). \label{comtri2}
	\end{align}
\end{definition}

\begin{definition}[\cite{ni2013poisson}]
	A \textbf{post-Poisson algebra} is a tuple $(A, \{\,,\,\}, \star, \circ, \succ)$, such that $(A, \{\,,\,\}, \circ)$ is a Poisson algebra, $(A, \{\,,\,\}, \star)$ is a post-Lie algebra, $(A, \circ, \succ)$ is a commutative dendriform trialgebra, and they are compatible in the sense that for all $a, b, c \in A$:
	\begin{align}
		\{a, b \succ c\}              & = b\succ \{a, c\} -  (b \star a) \circ c,             \label{postPois1} \\
		a \star (b \circ c)             & = (a \star b) \circ c + b \circ (a \star c),           \label{postPois2}  \\
		(a \succ b + b \succ a + a \circ b) \star c & = a \succ (b \star c) + b \succ (a \star c) ,           \label{postPois3}  \\
		    (a \star b - b \star a + \{a, b\}) \succ c       & = a \star (b \succ c) - b \succ (a \star c) .\label{postPois4} 
	\end{align}
\end{definition}

Recall that a \textbf{derivation} of a commutative dendriform trialgebra $(A, \circ, \succ)$ is defined to be a linear map $D:A \to A$ satisfying 
\begin{equation*}
	D(a \circ b) = D(a) \circ b + a \circ D(b),\;\;
	D(a \succ b) = D(a) \succ b + a\succ D(b), \;\; 
	\forall a, b \in A.
\end{equation*}

There is a construction of a post-Poisson algebra by a pair of commuting derivations on a commutative dendriform trialgebra.

\begin{proposition}\label{sdztodpp}
	Let $(A, \circ, \succ)$ be a commutative dendriform trialgebra and $D_1,D_2: A \to A$ be two commuting derivations on $(A,\circ, \succ)$. 
	Define new binary operations $\{\,,\,\}, \star: A \otimes A \to A$ on $A$ by
	\begin{align*}
		\{a,b\}= D_1(a)\circ D_2(b) - D_2(a) \circ D_1(b), \;\;
		a \star b = D_1(a)\succ D_2(b) - D_2(a) \succ D_1(b), \;\; \forall a, b \in A. 
	\end{align*} 
	Then $(A, \{\,,\,\}, \star)$ is a post-Lie algebra. Moreover, $(A, \{\,,\,\}, \star, \circ, \succ)$ is a post-Poisson algebra.
\end{proposition}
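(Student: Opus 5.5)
Write $D_i a$ for $D_i(a)$. The plan is a direct verification of the axioms. We use three facts throughout: $D_1,D_2$ are derivations of both $\circ$ and $\succ$; they commute; and $\circ$ is commutative and associative, with the trialgebra identities \eqref{comtri1}--\eqref{comtri2}.

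\textbf{Step 1: the bracket.} First show that $\{\,,\,\}$ is a Lie bracket and that $(A,\{\,,\,\},\circ)$ is a Poisson algebra. This is the classical fact that two commuting derivations of a commutative associative algebra give a Poisson bracket $D_1\wedge D_2$. Skew-symmetry follows from commutativity of $\circ$. For the Leibniz rule, expand $\{a,b\circ c\}$ with the derivation property: the terms with $D_1D_2$ or $D_2D_1$ acting on $b\circ c$ split correctly, and what remains is $\{a,b\}\circ c+b\circ\{a,c\}$. For the Jacobi identity, expand $\{a,\{b,c\}\}$ into terms of the form $D_iD_j(\cdot)\circ D_k(\cdot)\circ D_l(\cdot)$. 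In the cyclic sum, each term with a second derivative appears twice with opposite signs. This uses $D_1D_2=D_2D_1$ together with commutativity and associativity of $\circ$.

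\textbf{Step 2: the post-Lie axioms.} Next verify \eqref{postLie1} and \eqref{postLie2} for $\star$, by the same bookkeeping.
\begin{itemize}
\item For \eqref{postLie1}, expand $c\star\{a,b\}=D_1c\succ D_2(D_1a\circ D_2b-D_2a\circ D_1b)-\cdots$ with the derivation rule, and rewrite every $x\succ(y\circ z)$ as $(x\succ y)\circ z$ using \eqref{comtri2}. The result then matches $\{c\star a,b\}+\{a,c\star b\}$ after expanding those brackets.
\item For \eqref{postLie2}, group the terms in which some $D_iD_j$ falls on $a$. These cancel between $c\star(b\star a)$ and $b\star(c\star a)$ after symmetrization, using $D_1D_2=D_2D_1$.
\item The terms where $D_iD_j$ falls on $b$ or $c$ cancel against $(b\star c)\star a-(c\star b)\star a$.
\item The terms with first derivatives only have the shape $x\succ(y\succ D_ka)$. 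Combine these via \eqref{comtri1} into $(x\succ y+y\succ x+x\circ y)\succ D_ka$. The symmetric part $x\succ y+y\succ x$ cancels, and the leftover $\circ$-part is exactly $-\{b,c\}\star a$.
\end{itemize}

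\textbf{Step 3: the compatibilities.} Verify \eqref{postPois1}--\eqref{postPois4} the same way, always reducing with \eqref{comtri1}--\eqref{comtri2}. Identity \eqref{postPois2} is the easiest: $D_2$ and $D_1$ distribute over $b\circ c$, and \eqref{comtri2} moves $\succ$ past $\circ$. For \eqref{postPois3}, note that $a\succ b+b\succ a+a\circ b$ behaves like a symmetric product with each $D_i$ acting as a derivation on it. Expanding and applying \eqref{comtri1} in reverse then gives $a\succ(b\star c)+b\succ(a\star c)$.

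Identities \eqref{postPois1} and \eqref{postPois4} are the main obstacle, since they mix $\{\,,\,\}$, $\star$, $\circ$ and $\succ$ simultaneously. For \eqref{postPois4}, first combine $a\star b-b\star a+\{a,b\}$ into terms of the form $(D_1a\succ D_2b+D_2b\succ D_1a+D_1a\circ D_2b)-(1\leftrightarrow2)$. Then apply \eqref{comtri1} to turn $(\cdot)\succ c$ into iterated $\succ$, and compare with $a\star(b\succ c)-b\succ(a\star c)$ expanded by the derivation rule. The second-derivative terms cancel by commutativity of $D_1,D_2$. For \eqref{postPois1}, expand $\{a,b\succ c\}$, use $D_i(b\succ c)=D_ib\succ c+b\succ D_ic$, and convert all terms to the normal form $x\succ(y\circ z)$ via \eqref{comtri2}. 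The result then matches $b\succ\{a,c\}-(b\star a)\circ c$.

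Since all of these are finite term-by-term matchings, I would organize each check by the positions of the derivatives, i.e. by the multi-index of which $D_i$ hits which argument. Within each class, cancellation then uses only \eqref{comtri1}, \eqref{comtri2}, commutativity of $\circ$, and $[D_1,D_2]=0$.
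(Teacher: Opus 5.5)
Your route is the paper's: direct expansion using the derivation rule, \eqref{comtri1}--\eqref{comtri2}, commutativity of $\circ$ and $D_1D_2=D_2D_1$. The paper writes out only \eqref{postLie1} and \eqref{postPois1} and calls the rest similar. The route does go through, but your term-by-term account of \eqref{postLie2} is wrong as stated.

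Each of $\{\,,\,\}$ and $\star$ applies $D_1$ once and $D_2$ once. So every term of \eqref{postLie2} carries four derivative applications on three arguments, and exactly one argument is differentiated twice. Consequently:
\begin{itemize}
\item There are no ``first-derivative-only'' terms $x\succ(y\succ D_ka)$.
\item $\{b,c\}\star a=D_1\{b,c\}\succ D_2a-D_2\{b,c\}\succ D_1a$ consists entirely of terms with the second derivative on $b$ or $c$.
\item The terms of $c\star(b\star a)-b\star(c\star a)$ with the double derivative on $b$ or $c$ are iterated products $D_ic\succ(D_jD_kb\succ D_la)$, and the analogues with $b,c$ swapped. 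These cannot cancel against the $(x\succ y)\succ z$ terms of $(b\star c)\star a-(c\star b)\star a$ alone.
\end{itemize}
The correct grouping merges your last two bullets. By \eqref{comtri1},
\[
D_ic\succ(D_jD_kb\succ D_la)=(D_ic\succ D_jD_kb+D_jD_kb\succ D_ic+D_ic\circ D_jD_kb)\succ D_la .
\]
The $\succ$-parts then cancel $(b\star c)\star a-(c\star b)\star a$, and the $\circ$-parts cancel $\{b,c\}\star a$. Equivalently, $D_i\big(b\star c-c\star b+\{b,c\}\big)$ is a signed sum of symmetric products $x\succ y+y\succ x+x\circ y$, to which \eqref{comtri1} applies directly. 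The terms with the double derivative on $a$ do cancel as you say. The tools are $D_1D_2=D_2D_1$ together with the symmetry $x\succ(y\succ z)=y\succ(x\succ z)$, which follows from \eqref{comtri1} because its left side is symmetric in the first two arguments.

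A smaller point of the same kind concerns the Leibniz rule and \eqref{postPois1}--\eqref{postPois4}. These involve only first derivatives, one $D_1$ and one $D_2$ per term. So there are no $D_1D_2$- or $D_2D_1$-terms to cancel there, and $[D_1,D_2]=0$ is used only for the Jacobi identity, \eqref{postLie1} and \eqref{postLie2}. In \eqref{postPois4}, what actually cancels after your rewriting of the left side is $D_1a\succ(b\succ D_2c)$ against $b\succ(D_1a\succ D_2c)$, and likewise with $1\leftrightarrow 2$. This again uses the symmetry of iterated $\succ$, not commutativity of the derivations. With these corrections your verification is complete, and it covers more cases explicitly than the paper does.
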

\begin{proof}
	It is known that $(A,\{\,,\,\})$ is a Lie algebra. 
	For all $a, b, c \in A$, we have
	\begin{align*}
		& c \star \{a, b\} - \{c \star a, b\} - \{a, c \star b\}\\
		&=   	D_1(c)\succ D_2 ( D_1(a)\circ D_2(b) - D_2(a) \circ D_1(b)) - D_2(c)\succ D_1 ( D_1(a)\circ D_2(b) - D_2(a) \circ D_1(b)) \\
		&\quad - D_1( D_1(c)\succ D_2(a) - D_2(c) \succ D_1(a) )  \circ D_2(b) + D_2( D_1(c)\succ D_2(a) - D_2(c) \succ D_1(a) )  \circ D_1(b)\\
		&\quad+ D_1( D_1(c)\succ D_2(b) - D_2(c) \succ D_1(b) )  \circ D_2(a) - D_2( D_1(c)\succ D_2(b) - D_2(c) \succ D_1(b) )  \circ D_1(a)\\
		&\overset{ \eqref{comtri2}}{=}0.  
	\end{align*}
	Hence, Eq.~\eqref{postLie1} holds.
	Similarly, Eq.~\eqref{postLie2} holds, so $(A, \{\,,\,\}, \star)$ is a post-Lie algebra. 
	Moreover
	\begin{align*}
		&\{a, b \succ c\} - b \succ \{a, c\} + c \circ (b \star a)\\
		&= D_1(a)\circ (D_2(b) \succ c)) +D_1(a)\circ (b \succ D_2( c)) - D_2(a) \circ (D_1(b) \succ c ) - D_2(a) \circ (b\succ D_1( c) )\\
		&\quad - b \succ (D_1(a)\circ D_2(c) - D_2(a) \circ D_1(c))+ c \circ (D_1(b)\succ D_2(a) - D_2(b) \succ D_1(a))\\
		&\overset{ \eqref{comtri2}}{=} D_1(a)\circ (D_2(b) \succ c)) - D_2(a) \circ (b \succ D_1( c) ) + b \succ ( D_2(a) \circ D_1(c)) - c \circ (D_2(b) \succ D_1(a))\\
		&\overset{\eqref{comtri2}}{=} 0. 
	\end{align*}
	Thus, Eq.~\eqref{postPois1} holds.
	The remaining Eqs.~\eqref{postPois2}-\eqref{postPois4} follow similarly.
	Therefore, $(A, \{\,,\,\}, \star, \circ, \succ)$ is a post-Poisson algebra.
\end{proof}

\begin{remark}
	Proposition~\ref{sdztodpp} provides a construction of post-Poisson algebras from a pair of commuting derivations on commutative dendriform trialgebras, analogous to the classical construction of Poisson algebras from a pair of commuting derivations on commutative associative algebras.
\end{remark}

\begin{proposition}[\cite{ni2013poisson}]\label{prop:pp2p}
	Let $(A, \{\,,\,\}, \star, \circ, \succ)$ be a post-Poisson algebra.
	Define two binary operations $[\,,\,], \cdot: A \otimes A \rightarrow A$ on $A$ by
	\begin{align}
		[a, b] :&= a \star b - b \star a + \{a, b\}, \label{assoLie} \\ 
		a \cdot b :&= a \succ b + b \succ a + a \circ b, \;\;
		\forall a, b \in A. \label{assoCom}
	\end{align}
	Then $(A, [\,,\,], \cdot)$ is a Poisson algebra, called the {\bf associated Poisson algebra} of $(A, \{\,,\,\}, \star, \circ, \succ)$.
	Moreover, $(A, \{\,,\,\}, \circ, L_\star, L_\succ)$ is an $(A, [\,,\,], \cdot)$-module Poisson algebra.
\end{proposition}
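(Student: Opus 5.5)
Our approach is to reduce everything to the semi-direct product criterion stated earlier: $(V,[\,,\,]_V,\cdot_V,\rho,\zeta)$ is an $A$-module Poisson algebra if and only if $A\ltimes_{\rho,\zeta}V$ is a Poisson algebra. Here $V=A$ as a vector space, with $[\,,\,]_V=\{\,,\,\}$, $\cdot_V=\circ$, $\rho=L_\star$ and $\zeta=L_\succ$. The plan has three steps, carried out in order.

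\textbf{Step 1 ($(A,[\,,\,],\cdot)$ is a Poisson algebra).}
\begin{itemize}
\item Commutativity of $\cdot$ and skew-symmetry of $[\,,\,]$ are immediate from \eqref{assoLie} and \eqref{assoCom}.
\item For associativity of $\cdot$, expand $(a\cdot b)\cdot c - a\cdot(b\cdot c)$. Use \eqref{comtri1} to rewrite $(a\cdot b)\succ c$ as $a\succ(b\succ c)$. Use \eqref{comtri2} together with commutativity and associativity of $\circ$ to rewrite the terms $(a\succ b)\circ c$. The remaining terms then cancel in pairs. This is the standard fact that a commutative tridendriform algebra has an associated commutative associative algebra.
\item For the Jacobi identity of $[\,,\,]$, expand the cyclic sum. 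Use \eqref{postLie1} to handle the mixed $\star$/$\{\,,\,\}$ terms and \eqref{postLie2} (cyclically permuted) for the pure $\star$ terms. Together with Jacobi for $\{\,,\,\}$, everything cancels. This is the known statement that the subadjacent bracket of a post-Lie algebra is a Lie bracket.
\end{itemize}

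\textbf{Step 2 (the Leibniz rule).} We verify $[a,b\cdot c]=[a,b]\cdot c+b\cdot[a,c]$.
\begin{itemize}
\item Expanding gives $3\times 3=9$ types of terms on each side, involving $\star,\{\,,\,\},\succ,\circ$.
\item Use \eqref{postPois3} to convert $(b\cdot c)\star a$. Use \eqref{postPois4} to convert $[a,b]\succ c$. Use \eqref{postPois2} for $a\star(b\circ c)$, and \eqref{postPois1} (applied twice, with $b,c$ swapped) for $\{a,b\succ c\}$.
\item Finally apply the Poisson compatibility of $(A,\{\,,\,\},\circ)$ for $\{a,b\circ c\}$.
\end{itemize}
The main obstacle is the bookkeeping here: one must pair each term from the left-hand side with exactly one instance of an axiom. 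I would organize the computation by sorting terms according to which operation is outermost.

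\textbf{Step 3 (the module Poisson algebra structure).} The conditions on $\rho=L_\star$ and $\zeta=L_\succ$ read as follows.
\begin{itemize}
\item $L_\star$ being a representation of $(A,[\,,\,])$ is exactly \eqref{postLie2}.
\item $L_\succ$ being a representation of $(A,\cdot)$ is \eqref{comtri1}.
\item The two compatibilities in \eqref{eq:repp} are \eqref{postPois3} and \eqref{postPois4}, respectively.
\item The first identity of \eqref{eq:pmc} is \eqref{postLie1}, and the second is \eqref{comtri2} (using commutativity of $\circ$).
\item In \eqref{eq:pmd}, the first identity is \eqref{postPois2} and the second is \eqref{postPois1}.
\end{itemize}
This step is a direct dictionary check. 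Finally, one could alternatively phrase the whole result via the semi-direct product proposition, but it is more direct to verify the axioms of Definition~\ref{def:module} as above, since each one is literally one of the post-Poisson identities.
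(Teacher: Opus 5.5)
Your proposal is correct. The paper does not prove this proposition itself; it is quoted from \cite{ni2013poisson}, so there is no internal argument to compare against. Your direct verification is the standard one. Your Step 3 dictionary, in which each module-Poisson-algebra axiom for $(A,\{\,,\,\},\circ,L_\star,L_\succ)$ is literally one post-Poisson identity, is exactly the content of the Remark that follows the proposition in the paper.

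I checked the Leibniz computation with exactly the axioms you list. On the right-hand side, the terms $\pm b\succ(a\star c)$ and $\pm c\succ(a\star b)$ produced by \eqref{postPois4} and by $b\succ[a,c]$, $c\succ[a,b]$ cancel. The remaining twelve terms then match the twelve terms on the left one-to-one, using commutativity of $\circ$ once, for $(c\star a)\circ b=b\circ(c\star a)$. So your list of axioms for Step 2 is exactly what is needed.

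Two places should be tightened. In the associativity check, after applying \eqref{comtri1} to $(a\cdot b)\succ c$ and $(b\cdot c)\succ a$, and \eqref{comtri2} to the $\circ$-terms, what remains is
\[
c\succ(a\succ b)+c\succ(b\succ a)-a\succ(c\succ b)-b\succ(c\succ a).
\]
These terms do not cancel literally. You need the identity $x\succ(y\succ z)=y\succ(x\succ z)$, which follows from \eqref{comtri1} because its left-hand side is symmetric in $x,y$.

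In the Jacobi check, \eqref{postLie2} is best used in the form $[a,b]\star c=a\star(b\star c)-b\star(a\star c)$, which absorbs the mixed term $\{a,b\}\star c$. The iterated $\star$-terms then cancel under the cyclic sum on their own. After cyclic summation, \eqref{postLie1} cancels the terms $c\star\{a,b\}$ against $\{a\star b,c\}-\{b\star a,c\}$, and Jacobi for $\{\,,\,\}$ handles the rest.

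Finally, the semi-direct product reduction announced in your first paragraph is never used and can be dropped.
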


\begin{remark}
	More generally, post-Poisson algebras admits the following equivalent characterization. 
	Let $A$ be a vector space equipped with binary operations $\{\,,\,\}, \star, \circ, \succ: A \otimes A \to A$.
	Define $[\,,\,]$ and $\cdot$ by Eqs.~\eqref{assoLie} and \eqref{assoCom}, respectively. 
	Then $(A, \{\,,\,\}, \star, \circ, \succ)$ is a post-Poisson algebra if and only if $(A, \{\,,\,\}, \circ)$ and $(A, [\,,\,], \cdot)$ are Poisson algebras, and $(A, \{\,,\,\}, \circ, L_\star, L_\succ)$ is an $(A, [\,,\,], \cdot)$-module Poisson algebra.
\end{remark}

The following result establishes the relationship between $\mathcal{O}$-operators and post-Poisson algebras.
In particular, it shows that an $\mathcal{O}$-operator induces a new Poisson algebra structure and gives a homomorphism between Poisson algebras.
\begin{proposition}[\cite{ni2013poisson}]\label{prop:o2pp}
	Let $(A, [\,,\,], \cdot)$ be a Poisson algebra and $(V, [\,,\,]_V, \cdot_V, \rho, \zeta)$ be an $A$-module Poisson algebra.
	Let $T: V \rightarrow A$ be an $\mathcal{O}$-operator of weight $\lambda$ on $(A, [\,,\,], \cdot)$ associated to $(V, [\,,\,]_V, \cdot_V, \rho, \zeta)$.
	Define the following binary operations on $V$:
	\begin{equation}
		\{u, v\} = \lambda [u, v]_V, \;
		u \star v = \rho(T(u))v, \;
		u \circ v = \lambda u \cdot_V v, \;
		u \succ v = \zeta(T(u))v, \;
		\forall u, v \in V. \label{eq:oo2pp}
	\end{equation}
	Then $(V, \{\,,\,\}, \star, \circ, \succ)$ is a post-Poisson algebra.
	Define binary operations $\{\,,\,\}_T, \circ_T: V \otimes V \rightarrow V$ on $V$ by (for all $u, v \in V$)
	\begin{align}
		\{u, v\}_T & := \rho(T(u))v - \rho(T(v))u + \lambda [u, v]_V, \label{eq:soo2p1}   \\
		u \circ_T v & := \zeta(T(u))v + \zeta(T(v))u + \lambda u \cdot_V v. \label{eq:soo2p2}
	\end{align}
	Then $(V, \{\,,\,\}_T, \circ_T)$ is a Poisson algebra, as the associated Poisson algebra of $(V$, $\{\,,\,\}$, $\star$, $\circ$, $\succ)$.
	Moreover, $T$ is a homomorphism of Poisson algebras between $(V, \{\,,\,\}_T, \circ_T)$ and $(A, [\,,\,], \cdot)$.
\end{proposition}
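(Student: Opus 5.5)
The plan is to verify the post-Poisson axioms directly for the operations in \eqref{eq:oo2pp}, and then read off the remaining claims from Proposition~\ref{prop:pp2p}. Rescaling by $\lambda$ preserves the Poisson identities, so $(V,\lambda[\,,\,]_V,\lambda\cdot_V)$ is a Poisson algebra. This gives $(V,\{\,,\,\},\circ)$ as a Poisson algebra with no work.

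For each remaining axiom we expand $L_\star(u)=\rho(T(u))$ and $L_\succ(u)=\zeta(T(u))$. The identities then split into two kinds, handled as follows.

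\emph{Linear in $T$.} These are \eqref{postLie1}, \eqref{comtri2}, \eqref{postPois1} and \eqref{postPois2}. They involve a single application of $T$, so they follow immediately from the $A$-module Poisson algebra axioms \eqref{eq:pmc}--\eqref{eq:pmd} with $a=T(u)$:
\begin{itemize}
\item \eqref{postLie1} is the derivation property of $\rho(a)$ on $[\,,\,]_V$;
\item \eqref{comtri2} is $\zeta(a)(u\cdot_V v)=(\zeta(a)u)\cdot_V v$;
\item \eqref{postPois2} is the derivation property of $\rho(a)$ on $\cdot_V$;
\item \eqref{postPois1} is the last identity in \eqref{eq:pmd}, after using commutativity of $\cdot_V$.
\end{itemize}

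\emph{Quadratic in $T$.} These are \eqref{postLie2}, \eqref{comtri1}, \eqref{postPois3} and \eqref{postPois4}. In each, the left-hand side is $L_\star$ or $L_\succ$ evaluated at $\{u,v\}_T$ or $u\circ_T v$, i.e.\ at $\rho(T(\{u,v\}_T))$ or $\zeta(T(u\circ_T v))$. The $\mathcal{O}$-operator identities rewrite $T(\{u,v\}_T)=[T(u),T(v)]$ and $T(u\circ_T v)=T(u)\cdot T(v)$. After this substitution each identity becomes one of the module relations:
\begin{itemize}
\item \eqref{postLie2} becomes $\rho([a,b])=[\rho(a),\rho(b)]$;
\item \eqref{comtri1} becomes $\zeta(a\cdot b)=\zeta(a)\zeta(b)$;
\item \eqref{postPois3} becomes $\rho(a\cdot b)=\zeta(b)\rho(a)+\zeta(a)\rho(b)$;
\item \eqref{postPois4} becomes $\zeta([a,b])=\rho(a)\zeta(b)-\zeta(b)\rho(a)$.
\end{itemize}
Here $a=T(u)$, $b=T(v)$, and everything is applied to $w$.

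I expect the main obstacle to be bookkeeping: checking that the orientation and sign conventions of \eqref{postLie2} and \eqref{postPois3}--\eqref{postPois4} match \eqref{eq:repp} exactly. The safest route is to write each identity as ``operator applied to $w$'' and compare term by term.

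For the remaining claims, Proposition~\ref{prop:pp2p} gives that the associated operations are a Poisson algebra. Substituting \eqref{eq:oo2pp} into \eqref{assoLie} and \eqref{assoCom} yields exactly \eqref{eq:soo2p1} and \eqref{eq:soo2p2}, so $(V,\{\,,\,\}_T,\circ_T)$ is that associated Poisson algebra. Finally, the two defining equations of an $\mathcal{O}$-operator read $T(\{u,v\}_T)=[T(u),T(v)]$ and $T(u\circ_T v)=T(u)\cdot T(v)$, which is precisely the statement that $T$ is a homomorphism of Poisson algebras.
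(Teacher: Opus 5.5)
Your proposal is correct and complete. The identities \eqref{postLie1}, \eqref{comtri2}, \eqref{postPois2} and \eqref{postPois1} are $\lambda$ times \eqref{eq:pmc}--\eqref{eq:pmd}. For \eqref{postPois1} no commutativity of $\cdot_V$ is needed: it is literally $\lambda$ times the second identity of \eqref{eq:pmd} with $a=T(v)$. After the substitutions $T(\{u,v\}_T)=[T(u),T(v)]$ and $T(u\circ_T v)=T(u)\cdot T(v)$, the remaining four reduce to exactly the module relations you list, and the orientations in \eqref{postLie2}, \eqref{postPois3} and \eqref{postPois4} do match \eqref{eq:repp} with $a=T(u)$, $b=T(v)$.

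The paper gives no proof of its own here; it quotes the result from \cite{ni2013poisson}. Your direct axiom check, followed by Proposition~\ref{prop:pp2p} and the definition of an $\mathcal{O}$-operator, is the standard argument.

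There is a more conceptual way to package the same computations with the paper's tools:
\begin{enumerate}
\item Lemma~\ref{lem:npaeq} with $\mathcal{L}_T^\lambda=\mathcal{A}_T^\lambda=0$ shows that $(V,\{\,,\,\}_T,\circ_T)$ is a Poisson algebra.
\item Your eight checks then say precisely that $(V,\lambda[\,,\,]_V,\lambda\cdot_V)$, with actions $u\mapsto\rho(T(u))$ and $u\mapsto\zeta(T(u))$, is a $(V,\{\,,\,\}_T,\circ_T)$-module Poisson algebra. It is the pullback of the given $A$-module Poisson algebra along the homomorphism $T$.
\item The post-Poisson structure then follows from the characterization in the remark after Proposition~\ref{prop:pp2p}.
\end{enumerate}
Your route has the small advantage that the Poisson property of $(V,\{\,,\,\}_T,\circ_T)$ comes out for free from Proposition~\ref{prop:pp2p} rather than being an input.
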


\begin{corollary}\label{RBtopostPois}
	Let $(A, [\,,\,], \cdot)$ be a Poisson algebra and $T: A \rightarrow A$ be a Rota-Baxter operator of weight $\lambda$ on $(A, [\,,\,], \cdot)$.
	Then there is a post-Poisson algebra structure $(A, \{\,,\,\}, \star, \circ, \succ)$ on $A$ given by
	\begin{align*}
		\{a, b\} = \lambda [a, b], \;
		a \star b = [T(a), b], \;
		a \circ b = \lambda a \cdot b, \;
		a \succ b = T(a) \cdot b, \;
		\forall a, b \in A.
	\end{align*}
	If in addition $T$ is invertible, then there is a post-Poisson algebra structure $(A, \{\,,\,\}_T, \star_T, \circ_T, \succ_T)$ on $A$, whose associated Poisson algebra is $(A, [\,,\,], \cdot)$, given by (for all $a, b \in A$)
	\begin{align*}
		\{a, b\}_T & = \lambda T([T^{-1}(a), T^{-1}(b)]), \;\;\;\;
		a \star_T b = T([a, T^{-1}(b)]), \;              \\
		a \circ_T b & = \lambda T(T^{-1}(a) \cdot T^{-1}(b)), \;\quad
		a \succ_T b = T(a \cdot T^{-1}(b)).
	\end{align*}
\end{corollary}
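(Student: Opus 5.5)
The first part of Corollary~\ref{RBtopostPois} is the special case $V=A$ of Proposition~\ref{prop:o2pp}. By Definition~\ref{def:oo}, a Rota-Baxter operator of weight $\lambda$ is an $\mathcal{O}$-operator of weight $\lambda$ associated to the $A$-module Poisson algebra $(A,[\,,\,],\cdot,\ad_{[,]},L_\cdot)$. Substituting $\rho=\ad$ and $\zeta=L_\cdot$ into Eq.~\eqref{eq:oo2pp} gives exactly
\[
\{a,b\}=\lambda[a,b],\quad a\star b=[T(a),b],\quad a\circ b=\lambda a\cdot b,\quad a\succ b=T(a)\cdot b.
\]
So Proposition~\ref{prop:o2pp} yields the post-Poisson structure directly.

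For the second part, with $T$ invertible, I would transport the structure just obtained along $T$. Write $(A,\{\,,\,\}',\star',\circ',\succ')$ for the post-Poisson algebra of the first part, and define the new operations on $A$ by $x\ast_T y:=T\big(T^{-1}(x)\ast' T^{-1}(y)\big)$ for each $\ast\in\{\{\,,\,\},\star,\circ,\succ\}$. All post-Poisson axioms are multilinear identities in the operations, so the transported structure is again post-Poisson, and $T$ is an isomorphism between the two.

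Explicitly, the transported operations are:
\begin{itemize}
\item $\{a,b\}_T=\lambda T([T^{-1}a,T^{-1}b])$;
\item $a\star_T b=T([T T^{-1}a,T^{-1}b])=T([a,T^{-1}b])$;
\item $a\circ_T b=\lambda T(T^{-1}a\cdot T^{-1}b)$;
\item $a\succ_T b=T(a\cdot T^{-1}b)$.
\end{itemize}
These match the formulas in the statement.

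It remains to identify the associated Poisson algebra. Transport commutes with forming the combinations in Eqs.~\eqref{assoLie} and \eqref{assoCom}, so the associated bracket of the transported structure is $T$ applied to the associated bracket of $(\{\,,\,\}',\star',\ldots)$ evaluated at $T^{-1}a$ and $T^{-1}b$. By Proposition~\ref{prop:o2pp}, that associated bracket is $\{u,v\}_T=[T u,v]-[T v,u]+\lambda[u,v]$. Since $T$ is a Poisson homomorphism, $T\{u,v\}_T=[Tu,Tv]$. Hence
\[
\text{associated bracket}(a,b)=T\{T^{-1}a,T^{-1}b\}_T=[a,b].
\]
The same argument with $\circ_T$ and the Rota-Baxter identity for $\cdot$ gives $a\cdot b$.

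The main, mild, obstacle is bookkeeping: the symbols $\{\,,\,\}_T$ and $\circ_T$ in the corollary denote the transported operations, not the ones in Eqs.~\eqref{eq:soo2p1}--\eqref{eq:soo2p2}. I will keep the notation distinct during the argument. One could instead verify the axioms directly, but transport of structure avoids that computation.
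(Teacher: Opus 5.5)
Your proposal is correct and follows the same route as the paper. The first part is Proposition~\ref{prop:o2pp} applied to the adjoint $A$-module Poisson algebra $(A,[\,,\,],\cdot,\ad_{[,]},L_\cdot)$, and the second part transports that structure along the invertible $T$; your check via the Rota--Baxter identities that the associated Poisson algebra is $(A,[\,,\,],\cdot)$ spells out a step the paper only asserts.
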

\begin{proof}
	Since $(A, [\,,\,], \cdot, \ad_{[,]}, L_{\cdot})$ is an $A$-module Poisson algebra, Proposition~\ref{prop:o2pp} implies that $(A$, $\{\,,\,\}$, $\star$, $\circ$, $\succ)$ is a post-Poisson algebra.
	When $T$ is invertible, this structure can be transformed into $(A$, $\{\,,\,\}_T$, $\star_T$, $\circ_T$, $\succ_T)$, whose associated Poisson algebra is precisely $(A, [\,,\,], \cdot)$.
\end{proof}

\begin{example}\label{HesPois}
	Let $(A, [\ ,\ ] , \cdot )$ be a 3-dimensional Poisson algebra with a basis $\{e_1, e_2, e_3\}$ whose non-zero bracket and multiplication are given by
	\begin{equation}
		[e_1,e_2] = e_3 = - [e_2, e_1], \quad e_1 \cdot e_2 = e_3 = e_2 \cdot e_1. \label{HPCF1}
	\end{equation}
	Let $P: A \to A$ be the linear map defined by
	\begin{equation}
		P(e_1) = e_1, \quad P(e_2) = 2e_2, \quad P(e_3) = \frac{1}{2}e_3. \label{HPCF1RB}
	\end{equation}
	Then $P$ is a Rota-Baxter operator of weight $1$ on $(A, [\ ,\ ], \cdot)$.
	By Corollary~\ref{RBtopostPois}, there is a post-Poisson algebra $(A$, $\{\,,\,\}$, $\star$, $\circ$, $\succ)$, whose non-zero binary operations are given by
	\begin{equation*}
		\{ e_1,e_2\} \!=\! e_3 \!=\! -\{e_2, e_1\}, e_1 \star e_2 \!=\! e_3, e_2 \star e_1 \!=\! -2e_3, e_1 \circ e_2 \!=\! e_3 \!=\! e_2 \circ e_1, e_1 \succ e_2 \!=\! e_3, e_2 \succ e_1 \!=\! 2e_3.
	\end{equation*}
\end{example}

\begin{theorem}\label{thm:ippeq}
	Let $(A, [\,,\,], \cdot)$ be a Poisson algebra.
	Then there is a post-Poisson algebra structure on $A$, whose associated Poisson algebra is $(A, [\,,\,], \cdot)$, if and only if there is an invertible $\mathcal{O}$-operator of weight $1$ on $(A, [\,,\,], \cdot)$ associated to an $A$-module Poisson algebra.
\end{theorem}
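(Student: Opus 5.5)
We prove the two implications separately. In both directions the bridge between post-Poisson structures and $\mathcal{O}$-operators is Proposition~\ref{prop:pp2p} (for necessity) and Proposition~\ref{prop:o2pp} (for sufficiency). The idea mirrors the invertible case of Corollary~\ref{RBtopostPois}.

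\emph{Necessity.} Suppose $(A, \{\,,\,\}, \star, \circ, \succ)$ is a post-Poisson algebra whose associated Poisson algebra is $(A, [\,,\,], \cdot)$. By Proposition~\ref{prop:pp2p}, $(A, \{\,,\,\}, \circ, L_\star, L_\succ)$ is an $(A, [\,,\,], \cdot)$-module Poisson algebra. We take $V = A$ with this module Poisson algebra structure and $T = \id_A$, which is invertible. The weight-$1$ $\mathcal{O}$-operator identities become
\begin{align*}
[u,v] &= L_\star(u)v - L_\star(v)u + \{u,v\},\\
u \cdot v &= L_\succ(u)v + L_\succ(v)u + u \circ v.
\end{align*}
These are exactly Eqs.~\eqref{assoLie} and \eqref{assoCom}. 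So $\id_A$ is an invertible $\mathcal{O}$-operator of weight $1$.

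\emph{Sufficiency.} Suppose $T: V \to A$ is an invertible $\mathcal{O}$-operator of weight $1$ associated to $(V, [\,,\,]_V, \cdot_V, \rho, \zeta)$. Proposition~\ref{prop:o2pp} gives a post-Poisson algebra $(V, \{\,,\,\}, \star, \circ, \succ)$ via Eq.~\eqref{eq:oo2pp}. Its associated Poisson algebra is $(V, \{\,,\,\}_T, \circ_T)$, and $T$ is a homomorphism from it to $(A, [\,,\,], \cdot)$. Since $T$ is a bijective linear map, it is an isomorphism of Poisson algebras.

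We then transport the post-Poisson structure to $A$ along $T$. For $a, b \in A$, define
\begin{align*}
\{a,b\}_A &:= T\big([T^{-1}a, T^{-1}b]_V\big), & a \star_A b &:= T\big(\rho(a)T^{-1}b\big),\\
a \circ_A b &:= T\big(T^{-1}a \cdot_V T^{-1}b\big), & a \succ_A b &:= T\big(\zeta(a)T^{-1}b\big).
\end{align*}
The post-Poisson axioms are identities in the four operations only, so they are preserved under transport by a linear isomorphism. Hence $(A, \{\,,\,\}_A, \star_A, \circ_A, \succ_A)$ is a post-Poisson algebra. Its associated bracket is
\[
a \star_A b - b \star_A a + \{a,b\}_A = T\big(\{T^{-1}a, T^{-1}b\}_T\big) = [a,b],
\]
where the last step uses the homomorphism property of $T$. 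Likewise, the associated commutative product is $a \cdot b$. So the associated Poisson algebra is $(A, [\,,\,], \cdot)$.

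There is no real obstacle in this argument. The only point requiring care is that the transport of structure is legitimate and that the associated operations are computed correctly. Both follow directly from the defining equations of $\{\,,\,\}_T$ and $\circ_T$ together with the homomorphism statement in Proposition~\ref{prop:o2pp}.
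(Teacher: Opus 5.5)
Your proposal is correct and follows the paper's proof essentially verbatim. Necessity uses $\id_A$ as a weight-$1$ $\mathcal{O}$-operator associated to $(A,\{\,,\,\},\circ,L_\star,L_\succ)$, and sufficiency transports the post-Poisson structure of Proposition~\ref{prop:o2pp} along the invertible $T$ with the same four formulas; your explicit check that the transported associated operations recover $[\,,\,]$ and $\cdot$ via the homomorphism property of $T$ spells out a step the paper leaves implicit.
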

\begin{proof}
	Suppose that $(A, \{\,,\,\}, \star, \circ, \succ)$ is a post-Poisson algebra, whose associated Poisson algebra is $(A, [\,,\,], \cdot)$.
	Then $\id: A \rightarrow A$ is an invertible $\mathcal{O}$-operator of weight $1$ on $(A, [\,,\,], \cdot)$ associated to the $A$-module Poisson algebra $(A, \{\,,\,\}, \circ, L_\star, L_\succ)$.

	Conversely, suppose that $T: V \rightarrow A$ is an invertible $\mathcal{O}$-operator of weight $1$ on $(A, [\,,\,], \cdot)$ associated to an $A$-module Poisson algebra $(V, [\,,\,]_V, \cdot_V, \rho, \zeta)$.
	Consequently, Proposition~\ref{prop:o2pp} yields a post-Poisson algebra structure $(V, \{\,,\,\}, \star, \circ, \succ)$ via Eq.~\eqref{eq:oo2pp} with $\lambda = 1$.
	Since $T$ is invertible, this structure can be transformed into $(A, [\,,\,]_T, \star_T, \cdot_T, \succ_T)$, explicitly given by (for all $a, b \in A$)
	\begin{align*}
		[a, b]_T  & = T([T^{-1}(a), T^{-1}(b)]_V), \;\;\;\;
		a \star_T b = T(\rho(a)T^{-1}(b)), \;          \\
		a \cdot_T b & = T(T^{-1}(a) \cdot_V T^{-1}(b)), \;\,\quad
		a \succ_T b = T(\zeta(a) T^{-1}(b)),
	\end{align*}
	whose associated Poisson algebra is precisely $(A, [\,,\,], \cdot)$.
\end{proof}

Proposition~\ref{prop:o2pp} demonstrates that an $\mathcal{O}$-operator yields a new Poisson algebra structure on the underlying module space.
At the end of this subsection, we provide a necessary and sufficient condition for a general linear map $T$ to induce such a new Poisson algebra structure.
\begin{lemma}\label{lem:npaeq}
	Let $(A, [\,,\,], \cdot)$ be a Poisson algebra and $(V, [\,,\,]_V, \cdot_V, \rho, \zeta)$ be an $A$-module Poisson algebra.
	Let $T: V \rightarrow A$ be a linear map and $\lambda \in \mathbf{k}$.
	Define binary operations $\{\,,\,\}_T, \circ_T: V \otimes V \rightarrow V$ by Eqs.~\eqref{eq:soo2p1} and \eqref{eq:soo2p2} respectively.
	Then $(V, \{\,,\,\}_T, \circ_T)$ is a Poisson algebra if and only if for all $u, v, w \in V$:
	\begin{align}
		\rho\big(\mathcal{L}_{T}^{\lambda}(u, v)\big)w + \rho\big(\mathcal{L}_{T}^{\lambda}(v, w))\big) u + \rho\big(\mathcal{L}_{T}^{\lambda}(w, u)\big)v  & = 0, \label{eq:soo1} \\
		\zeta\big(\mathcal{A}_{T}^{\lambda}(u, v)\big)w - \zeta\big(\mathcal{A}_{T}^{\lambda}(v, w)\big) u                           & = 0, \label{eq:soo2} \\
		\zeta\big( \mathcal{L}_{T}^{\lambda}(u, v) \big)w + \zeta\big( \mathcal{L}_{T}^{\lambda}(u, w) \big)v + \rho\big(\mathcal{A}_{T}^{\lambda}(v, w)\big)u & = 0, \label{eq:soo3}
	\end{align}
	where
	\begin{align*}
		\mathcal{L}_{T}^{\lambda}(u, v) & = [T(u), T(v)] - T(\rho(T(u))v - \rho(T(v))u + \lambda [u, v]_V),     \\
		\mathcal{A}_{T}^{\lambda}(u, v) & = T(u) \cdot T(v) - T(\zeta(T(u))v + \zeta(T(v))u + \lambda u \cdot_V v).
	\end{align*}
\end{lemma}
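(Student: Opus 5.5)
The plan is a direct computation. It expands each Poisson axiom for $(V,\{\,,\,\}_T,\circ_T)$ and uses the module and $A$-module Poisson algebra identities to cancel every term except those collected into $\mathcal{L}_T^\lambda$ and $\mathcal{A}_T^\lambda$. Skew-symmetry of $\{\,,\,\}_T$ and commutativity of $\circ_T$ are immediate from Eqs.~\eqref{eq:soo2p1} and \eqref{eq:soo2p2}. Three identities remain to analyse: the Jacobi identity, associativity, and the Leibniz rule. For each I will show that the defect equals, up to an overall sign, the left-hand side of the corresponding equation \eqref{eq:soo1}, \eqref{eq:soo2} or \eqref{eq:soo3}. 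Both directions of the equivalence then follow at once.

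First, the Jacobi identity. I expand $\{\{u,v\}_T,w\}_T+\text{cyclic}$. The terms $\rho(T\{u,v\}_T)w$ are rewritten as $\rho([T(u),T(v)])w-\rho(\mathcal{L}_T^\lambda(u,v))w$. Then $\rho([T(u),T(v)]) = [\rho(T(u)),\rho(T(v))]$ cancels against the terms $\rho(T(u))\rho(T(v))w$ arising from $-\rho(T(w))\{u,v\}_T$, cyclically. The remaining mixed terms $\lambda\rho(T(w))[u,v]_V$ and $\lambda[\rho(T(u))v,w]_V$ cancel by the derivation property of $\rho$ on $[\,,\,]_V$ in Eq.~\eqref{eq:pmc}. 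The pure $\lambda^2$ terms vanish by the Jacobi identity of $[\,,\,]_V$. This is the Poisson analogue of the known Lie computation and leaves exactly $-\big(\rho(\mathcal{L}_T^\lambda(u,v))w+\text{cyclic}\big)$.

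Second, associativity. I compute $(u\circ_T v)\circ_T w-u\circ_T(v\circ_T w)$ in the same way. Here I use that $\zeta$ is a representation of $(A,\cdot)$, so $\zeta(T(u)\cdot T(v))=\zeta(T(u))\zeta(T(v))$. I also use commutativity of $\cdot$, the relation $\zeta(a)(u\cdot_V v)=(\zeta(a)u)\cdot_V v$ from Eq.~\eqref{eq:pmc}, and commutativity and associativity of $\cdot_V$. After cancellation, the defect should reduce to $\zeta(\mathcal{A}_T^\lambda(v,w))u-\zeta(\mathcal{A}_T^\lambda(u,v))w$, which is $-$\eqref{eq:soo2}.

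Third, the Leibniz rule, which I expect to be the main obstacle because all mixed compatibilities enter at once. I will expand $\{u,v\circ_T w\}_T-\{u,v\}_T\circ_T w-v\circ_T\{u,w\}_T$ into roughly two dozen terms, sorted by the number of $T$'s and powers of $\lambda$. The terms $\rho(T(u))\zeta(T(v))w$ are handled with $\zeta([a,b])=\rho(a)\zeta(b)-\zeta(b)\rho(a)$. The terms $\rho(T(v)\cdot T(w))u$ are handled with $\rho(a\cdot b)=\zeta(b)\rho(a)+\zeta(a)\rho(b)$ from Eq.~\eqref{eq:repp}. The mixed $\lambda$-terms are handled with the two relations in Eq.~\eqref{eq:pmd}. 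The $\lambda^2$ terms vanish by the Leibniz rule in $V$. What should survive is $\zeta(\mathcal{L}_T^\lambda(u,v))w+\zeta(\mathcal{L}_T^\lambda(u,w))v+\rho(\mathcal{A}_T^\lambda(v,w))u$, possibly up to a global sign. The delicate point is keeping signs consistent so that the leftover terms assemble precisely into $\mathcal{L}_T^\lambda$ and $\mathcal{A}_T^\lambda$. I would first verify the $\lambda=0$ part of the computation and then add the $\lambda$-linear terms separately.
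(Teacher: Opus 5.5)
Your proposal is correct and takes the same approach as the paper. The paper's proof is exactly this direct computation: skew-symmetry and commutativity are automatic, and the Jacobi, associativity and Leibniz defects equal $-\big(\rho(\mathcal{L}_T^\lambda(u,v))w+\text{cyclic}\big)$, $\zeta(\mathcal{A}_T^\lambda(v,w))u-\zeta(\mathcal{A}_T^\lambda(u,v))w$, and exactly the left-hand side of \eqref{eq:soo3} with a plus sign, respectively. Your assignment of module identities to groups of terms is accurate and more detailed than the paper's ``it is easy to check''.
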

\begin{proof}
	For all $u, v, w \in V$, it is easy to check that
	\begin{align*}
		 & \{\{u,v\}_T, w\}_T + \{\{v,w\}_T, u\}_T + \{\{w,u\}_T, v\}_T = - \rho\big(\mathcal{L}_{T}^{\lambda}(u, v)\big)w - \rho\big( \mathcal{L}_{T}^{\lambda}(v, w) \big) u - \rho\big( \mathcal{L}_{T}^{\lambda}(w, u) \big)v,    \\
		 & (u \circ_T v) \circ_T w - u \circ_T (v \circ_T w) = -\zeta\big( \mathcal{A}_{T}^{\lambda}(u, v) \big)w + \zeta\big( \mathcal{A}_{T}^{\lambda}(v, w) \big) u,                                  \\
		 & \{u, v \circ_T w\}_T - \{u, v\}_T \circ_T w - v \circ_T \{u, w\}_T = \zeta\big( \mathcal{L}_{T}^{\lambda}(u, v) \big)w + \zeta\big( \mathcal{L}_{T}^{\lambda}(u, w) \big)v + \rho\big( \mathcal{A}_{T}^{\lambda}(v, w) \big)u.
	\end{align*}
	Hence, $(V, \{\,,\,\}_T, \circ_T)$ is a Poisson algebra if and only if Eqs.~\eqref{eq:soo1}-\eqref{eq:soo3} hold.
\end{proof}

\subsection{Extended \texorpdfstring{$\mathcal{O}$}{O}-operators on Poisson algebras}
In this subsection, we introduce the notion of extended $\mathcal{O}$-operators on Poisson algebras as a generalization of $\mathcal{O}$-operators, following the philosophy of~\cite{bai2010nonabelian, bai2012O}.

\begin{definition}
	Let $(A, [\,,\,], \cdot)$ be a Poisson algebra, $(V, [\,,\,]_V, \cdot_V, \rho, \zeta)$ be an $A$-module Poisson algebra, and $\kappa, \mu \in \mathbf{k}$.
	A linear map $S: V \rightarrow A$ is called
	\begin{enumerate}[label=(\roman*)]
		\item
		   {\bf balanced} associated to $(V, \rho, \zeta)$ if
		   \begin{equation}
			   \rho(S(u))v = -\rho(S(v))u, \;\;
			   \zeta(S(u))v = \zeta(S(v))u, \;\;
			   \forall u, v \in V. \label{eq:bal}
		   \end{equation}

		\item
		   {\bf $A$-invariant of mass $\kappa$} associated to $(V, \rho, \zeta)$ if
		   \begin{equation}
			   \kappa[a, S(v)] = \kappa S(\rho(a)v), \;\;
			   \kappa(a \cdot S(v)) = \kappa S(\zeta(a) v), \;\;
			   \forall a \in A, \; v \in V. \label{eq:ainv}
		   \end{equation}

		\item
		   a {\bf balanced $A$-module homomorphism} associated to $(V, \rho, \zeta)$ if $S: V \rightarrow A$ is balanced and $A$-invariant of nonzero mass associated to $(V, \rho, \zeta)$.
		   That is, $S: V \rightarrow A$ is both an $A$-module homomorphism from $(V, \rho, \zeta)$ to $(A, \ad_{[,]}, L_\cdot)$ and a balanced linear map associated to $(V, \rho, \zeta)$.
		   
		\item
		   {\bf equivalent of mass $\mu$} associated to $(V, [\,,\,]_V, \cdot_V, \rho, \zeta)$ if
				\begin{equation}
					\mu \rho\big( S([u, v]_V) \big) w = \mu [\rho(S(u))v, w]_V, \;
					\mu \zeta(S(u \cdot_V v))w =
					\mu \big(\zeta(S(u))v\big) \cdot_V w, \;\;
					\forall u,v,w \in V. \label{eq:eqvm}
				\end{equation}
		
	\end{enumerate}
\end{definition}

\begin{definition}
	Let $(A, [\,,\,], \cdot)$ be a Poisson algebra, $(V, [\,,\,]_V, \cdot_V, \rho, \zeta)$ be an $A$-module Poisson algebra, and $\kappa, \mu, \lambda \in \mathbf{k}$.
	Let $T, S: V \to A$ be linear maps.
	Then $T$ is called an {\bf extended $\mathcal{O}$-operator of weight $\lambda$ with extension $S$ of mass $(\kappa, \mu)$} on $(A, [\,,\,], \cdot)$ associated to $(V, [\,,\,]_V, \cdot_V, \rho, \zeta)$ if for all $u, v\in V$,
	\begin{align}
		[T(u), T(v)] - T\big(\rho(T(u))v - \rho(T(v))u + \lambda [u,v]_V \big)     & = \kappa [S(u), S(v)] + \mu S([u, v]_V), \label{eq:ext-o-lie}     \\
		T(u) \cdot T(v) - T\big(\zeta(T(u))v + \zeta(T(v))u + \lambda u \cdot_V v\big) & = \kappa S(u) \cdot S(v) + \mu S(u \cdot_V v). \label{eq:ext-o-assoc}
	\end{align}
\end{definition}

\begin{remark}
	The parameters $\kappa, \mu$ and $\lambda$ in the above definitions are introduced so that different cases could be treated uniformly when these parameters vary.
\end{remark}

\begin{example}
	Let $(A, [\,,\,], \cdot)$ be a Poisson algebra.
	\begin{enumerate}[label=(\roman*)]
		\item
		   If $S = 0$, then an extended $\mathcal{O}$-operator of weight $\lambda$ with extension $S$ of mass $(\kappa, \mu)$ reduces to an $\mathcal{O}$-operator of weight $\lambda$ in Definition~\ref{def:oo}.

		\item
		   If $(V, [\,,\,]_V, \cdot_V, \rho, \zeta) = (A, [\,,\,], \cdot, \ad_{[,]}, L_\cdot)$, then $S = \id: A \rightarrow A$ is balanced and $A$-invariant of mass $\kappa$ associated to $(A, \ad_{[,]}, L_\cdot)$, and equivalent of mass $\mu$ associated to $(A, [\,,\,], \cdot, \ad_{[,]}, L_\cdot)$ for all $\kappa, \mu \in \mathbf{k}$.

		\item
		   Every $A$-module homomorphism from $(V, \rho, \zeta)$ to $(A, \ad_{[,]}, L_\cdot)$ is $A$-invariant of mass $\kappa$ associated to $(V, \rho, \zeta)$ for all $\kappa \in \mathbf{k}$.
	\end{enumerate}
\end{example}

\begin{lemma}\label{lem:baie}
	Let $(A, [\,,\,], \cdot)$ be a Poisson algebra and $(V, [\,,\,]_V, \cdot_V, \rho, \zeta)$ be an $A$-module Poisson algebra.
	Suppose that $S: V \rightarrow A$ is balanced associated to $(V, \rho, \zeta)$.
	Then
	\begin{enumerate}[label=(\roman*)]
		\item\label{it:balg1}
		   For all $u, v, w \in V$,
		   \begin{align}
			   \zeta(S([u, v]_V))w + \zeta(S([u, w]_V))v + \rho(S(v \cdot_V w)) u & = 0. \label{eq:ipf1}
		   \end{align}

		\item\label{it:balg2}
		   If $S$ in addition is $A$-invariant of mass $\kappa$ associated to $(V, \rho, \zeta)$,
		   then
		   \begin{align}
			   \kappa \rho(S(u)) \rho(S(v)) w + \kappa \rho(S(v)) \rho(S(w)) u + \kappa \rho(S(w)) \rho(S(u)) v & = 0, \label{eq:ipf2} \\
			   \kappa \zeta(S(u))\zeta(S(v)) w - \kappa \zeta(S(w)) \zeta(S(u)) v                & = 0, \label{eq:ipf3} \\
			   \kappa \zeta([S(u), S(v)])w + \kappa \zeta([S(u), S(w)])v + \kappa \rho(S(v) \cdot S(w)) u    & = 0. \label{eq:ipf4}
		   \end{align}

		\item\label{it:balg3}
		   If $S$ in addition is equivalent of mass $\mu$ associated to $(V, [\,,\,]_V, \cdot_V, \rho, \zeta)$,
		   then
		   \begin{align}
			   \mu \rho(S([u, v]_V))w + \mu \rho(S([v, w]_V))u + \mu \rho(S([w, u]_V))v & = 0, \label{eq:ipf5} \\
			   \mu \zeta(S(u \cdot_V v)) w - \mu \zeta(S(v \cdot_V w)) u        & = 0. \label{eq:ipf6}
		   \end{align}
	\end{enumerate}
\end{lemma}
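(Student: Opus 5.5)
All three parts should follow from direct computation. In each identity, the balance condition \eqref{eq:bal} moves $S$ from one argument to another. The remaining work uses the module-Poisson compatibilities \eqref{eq:pmc}, \eqref{eq:pmd}, the module axioms \eqref{eq:repp}, and, in parts \ref{it:balg2} and \ref{it:balg3}, the invariance condition \eqref{eq:ainv} or the equivalence condition \eqref{eq:eqvm}. Parts \ref{it:balg2} and \ref{it:balg3} are short. I would do them first and then use the same pattern for \ref{it:balg1}, which I expect to be the real obstacle.

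For \ref{it:balg2}, I combine invariance with balance. By \eqref{eq:ainv}, $\kappa[S(u),S(v)]=\kappa S(\rho(S(u))v)$. Hence
\[
\kappa\rho([S(u),S(v)])w=\kappa\rho\big(S(\rho(S(u))v)\big)w=-\kappa\rho(S(w))\rho(S(u))v .
\]
Since $\rho$ is a Lie module, the left side also equals $\kappa\big(\rho(S(u))\rho(S(v))w-\rho(S(v))\rho(S(u))w\big)$. Rewriting $\rho(S(u))w=-\rho(S(w))u$ in the middle term then gives \eqref{eq:ipf2}. For \eqref{eq:ipf3}, I compute $\kappa\zeta(S(u)\cdot S(v))w$ in two ways. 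It equals $\kappa\zeta(S(u))\zeta(S(v))w$ because $\zeta$ is a module, and it equals $\kappa\zeta(S(\zeta(S(u))v))w=\kappa\zeta(S(w))\zeta(S(u))v$ by invariance and balance.

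For \eqref{eq:ipf4}, I proceed the same way. Invariance and balance give
\[
\kappa\zeta([S(u),S(v)])w=\kappa\zeta(S(w))\rho(S(u))v,\qquad
\kappa\rho(S(v)\cdot S(w))u=-\kappa\rho(S(u))\zeta(S(v))w .
\]
Next, the second identity of \eqref{eq:repp} expands $\zeta([S(u),S(w)])v$. The terms should then cancel after one more use of balance on $\zeta(S(v))w=\zeta(S(w))v$.

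For \ref{it:balg3}, the key is to apply balance to the term whose bracket does not contain $u$. Balance gives $\mu\rho(S([v,w]_V))u=-\mu\rho(S(u))[v,w]_V$. By the derivation property in \eqref{eq:pmc}, this equals $-\mu[\rho(S(u))v,w]_V-\mu[v,\rho(S(u))w]_V$. The other two terms become $\mu[\rho(S(u))v,w]_V$ and $\mu[\rho(S(w))u,v]_V=-\mu[\rho(S(u))w,v]_V$ by \eqref{eq:eqvm} and balance. The sum then vanishes by skew-symmetry of $[\,,\,]_V$. For \eqref{eq:ipf6}, balance and \eqref{eq:pmc} give $\zeta(S(v\cdot_V w))u=\zeta(S(u))(v\cdot_V w)=(\zeta(S(u))v)\cdot_V w$. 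By \eqref{eq:eqvm}, this equals $\zeta(S(u\cdot_V v))w$ up to the factor $\mu$.

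For \ref{it:balg1}, only balance is available, together with the identities \eqref{eq:pmc}, \eqref{eq:pmd} and commutativity of $\cdot_V$. The plan is as follows.
\begin{enumerate}
\item Rewrite $\zeta(S([u,v]_V))w=\zeta(S(w))[u,v]_V$, and treat the second term in the same way.
\item Expand $\zeta(a)[u,v]_V=[u,\zeta(a)v]_V+(\rho(a)u)\cdot_V v$ using the second identity of \eqref{eq:pmd}.
\item Rewrite $\rho(S(v\cdot_V w))u=-\rho(S(u))(v\cdot_V w)$ and expand it by the derivation rule in \eqref{eq:pmd}.
\item Use balance to convert $\rho(S(w))u$ into $-\rho(S(u))w$, and $\zeta(S(w))v$ into $\zeta(S(v))w$.
\end{enumerate}
The main obstacle is the sign bookkeeping here. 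A naive pass leaves terms of the form $[u,\zeta(S(v))w]_V$ and $(\rho(S(u))v)\cdot_V w$ that do not obviously cancel. If they persist, I would try applying \eqref{eq:pmd} with the roles of the arguments exchanged, that is, to $[\zeta(S(v))w,u]_V$ using skew-symmetry. I would also try moving $S$ onto $u$ instead of $w$ in the first step, choosing whichever placement makes the bracket and product terms pair off.
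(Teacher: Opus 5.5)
Your arguments for \ref{it:balg2} and \ref{it:balg3} are correct and are essentially the paper's computations; for \eqref{eq:ipf5} you order the steps slightly differently, which is harmless. The gap is in \ref{it:balg1}, and it is not a matter of sign bookkeeping. If you carry out your steps 1--4 exactly as listed, you get
\[
\zeta(S([u,v]_V))w+\zeta(S([u,w]_V))v+\rho(S(v\cdot_V w))u=2\big([u,\zeta(S(v))w]_V-\rho(S(u))(v\cdot_V w)\big).
\]
The vanishing of the right-hand side is equivalent to \eqref{eq:ipf1} itself. The paper extracts exactly this identity from \eqref{eq:ipf1} in the proof of Proposition~\ref{prop:bae2amp}. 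So the route is circular. Your first fallback does not help: writing the residual as $-[\zeta(S(v))w,u]_V$ puts the $\zeta$-term in the first slot, where the second identity of \eqref{eq:pmd} does not apply. Your second fallback cannot be carried out in step 1, since balance on $\zeta(S([u,v]_V))w$ can only swap $[u,v]_V$ and $w$.

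The missing idea is to apply the second identity of \eqref{eq:pmd} with $v$ (resp.\ $w$), not $u$, as the outer bracket argument. Then $u$ lands under $\zeta$, and balance can move $S$ onto $u$:
\[
\zeta(S(w))[u,v]_V=-[v,\zeta(S(w))u]_V-(\rho(S(w))v)\cdot_V u,\qquad \zeta(S(v))[u,w]_V=-[w,\zeta(S(v))u]_V-(\rho(S(v))w)\cdot_V u.
\]
The two product terms cancel, because $\rho(S(w))v=-\rho(S(v))w$. By balance the brackets become $-[v,\zeta(S(u))w]_V-[w,\zeta(S(u))v]_V$. Now \eqref{eq:pmd}, together with $\zeta(S(u))([v,w]_V+[w,v]_V)=0$, gives $[v,\zeta(S(u))w]_V+[w,\zeta(S(u))v]_V=-\rho(S(u))(v\cdot_V w)$. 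This cancels against $\rho(S(v\cdot_V w))u=-\rho(S(u))(v\cdot_V w)$. This is the paper's argument read in reverse: the paper starts by rewriting $-\rho(S(u))(v\cdot_V w)$ as $[v,\zeta(S(u))w]_V+[w,\zeta(S(u))v]_V$.
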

\begin{proof}
	\ref{it:balg1}. For all $u, v, w \in V$, we have
	\begin{align*}
		 & \zeta(S([u, v]_V))w + \zeta(S([u, w]_V))v + \rho(S(v \cdot_V w)) u                      \\
		 & \overset{\eqref{eq:bal}}{=} \zeta(S(w))[u, v]_V + \zeta(S(v))[u, w]_V - \rho(S(u))(v \cdot_V w)        \\
		 & \overset{\eqref{eq:pmd}}{=} \zeta(S(w))[u, v]_V + \zeta(S(v))[u, w]_V + [v, \zeta(S(u))w] + [w, \zeta(S(u))v] \\
		 & \overset{\eqref{eq:bal}}{=} \zeta(S(w))[u, v]_V + \zeta(S(v))[u, w]_V + [v, \zeta(S(w))u] + [w, \zeta(S(v))u] \\
		 & \overset{\eqref{eq:pmd}}{=} -(\rho(S(w))v) \cdot u -(\rho(S(v))w) \cdot u \overset{\eqref{eq:bal}}{=} 0.
	\end{align*}

	\ref{it:balg2}. For all $u, v, w \in V$, we have
	\begin{equation*}
		\kappa \rho([S(u), S(v)])w \overset{\eqref{eq:ainv}}{=} \kappa \rho\big( S(\rho(S(u))v) \big)w \overset{\eqref{eq:bal}}{=} -\kappa \rho(S(w)) \rho(S(u))v
	\end{equation*}
	On the other hand,
	\begin{equation*}
		\kappa \rho([S(u), S(v)])w \!=\! \kappa \big(\rho(S(u)) \rho(S(v)) - \rho(S(v)) \rho(S(u)) \big) w \!\overset{\eqref{eq:bal}}{=}\! \kappa \rho(S(u)) \rho(S(v))w + \kappa \rho(S(v)) \rho(S(w)) u.
	\end{equation*}
	Therefore, Eq.~\eqref{eq:ipf2} holds.
	By
	\begin{equation*}
		\kappa \zeta(S(u))\zeta(S(v)) w = \kappa \zeta(S(u) \cdot S(v))w \overset{\eqref{eq:ainv}}{=} \kappa \zeta\big( S(\zeta(S(u))v)\big)w \overset{\eqref{eq:bal}}{=} \kappa \zeta(S(w))\zeta(S(u))v,
	\end{equation*}
	we show that Eq.~\eqref{eq:ipf3} holds.
	Finally, we have
	\begin{align*}
		\kappa \zeta([S(u), S(v)])w \overset{\eqref{eq:ainv}}{=} \kappa \zeta\big( S(\rho(S(u))v)\big)w \overset{\eqref{eq:bal}}{=} \kappa \zeta(S(w)) \rho(S(u))v,
	\end{align*}
	and
	\begin{align*}
		\kappa \rho(S(u) \cdot S(v)) w \overset{\eqref{eq:ainv}}{=} \kappa \rho\big( S(\zeta(S(u))v)\big)w \overset{\eqref{eq:bal}}{=} -\kappa \rho(S(w)) \zeta(S(u))v.
	\end{align*}
	Thus,
	\begin{align*}
		 & \kappa \zeta([S(u), S(v)])w + \kappa \zeta([S(u), S(w)])v + \kappa \rho(S(v) \cdot S(w)) u                 \\
		 & = \kappa \zeta(S(w)) \rho(S(u))v + \kappa \zeta([S(u), S(w)])v - \kappa \rho(S(u)) \zeta(S(v))w              \\
		 & \overset{\eqref{eq:ainv}}{=} \kappa \zeta(S(w)) \rho(S(u))v + \kappa \zeta([S(u), S(w)])v - \kappa \rho(S(u)) \zeta(S(w))v \\
		 & \overset{\eqref{eq:repp}}{=} 0.
	\end{align*}
	That is, Eq.~\eqref{eq:ipf4} holds.

	\ref{it:balg3}. For all $u, v, w \in V$, we have
	\begin{align*}
		 & \mu \rho(S([u, v]_V)) w \overset{\eqref{eq:eqvm}}{=} \mu [\rho(S(u))v, w] \overset{\eqref{eq:pmc}}{=} \mu \rho(S(u))[v, w]_V - \mu [v, \rho(S(u))w]_V     \\
		 & \overset{\eqref{eq:eqvm}}{=} \mu \rho(S(u))[v, w]_V + \mu \rho(S([u, w]_V)) v \overset{\eqref{eq:bal}}{=} -\mu \rho(S([v, w]_V)) u - \mu \rho(S([w, u]_V)) v.
	\end{align*}
	Therefore, Eq.~\eqref{eq:ipf5} holds.
	\begin{align*}
		\mu \zeta(S(u \cdot_V v)) w \overset{\eqref{eq:eqvm}}{=} \mu (\zeta(S(u))v) \cdot_V w \overset{\eqref{eq:pmc}}{=} \mu \zeta(S(u)) (v \cdot_V w) \overset{\eqref{eq:bal}}{=} \mu \zeta(S(v \cdot_V w)) u.
	\end{align*}
	That is, Eq.~\eqref{eq:ipf6} holds.
\end{proof}

\begin{proposition}\label{prop:bae2amp}
	Let $(A, [\,,\,], \cdot)$ be a Poisson algebra and $(V, [\,,\,]_V, \cdot_V, \rho, \zeta)$ be an $A$-module Poisson algebra.
	Suppose that $S: V \to A$ is a balanced $A$-module homomorphism associated to $(V, \rho, \zeta)$ satisfying Eq.~\eqref{eq:eqvm} with $\mu = \lambda$.
	Then $(V, \{\,,\,\}_{+}, \circ_{+}, \rho, \zeta)$ (resp. $(V, \{\,,\,\}_{-}, \circ_{-}, \rho, \zeta)$) is an $A$-module Poisson algebras, where $\{\,,\,\}_{+}$ and $\circ_{+}$ (resp. $\{\,,\,\}_{-}$ and $\circ_{-}$) are defined by (for all $u, v \in V$)
	\begin{align*}
		\{u, v\}_{+} = \lambda [u, v]_V - 2\rho(S(u))v \quad \text{(resp. $\{u, v\}_{-} = \lambda [u, v]_V + 2\rho(S(u))v$)}, \\
		u \circ_{+} v = \lambda u \cdot_V v - 2 \zeta(S(u))v \quad \text{(resp. $u \circ_{-} v = \lambda u \cdot_V v + 2 \zeta(S(u))v$)}.
	\end{align*}
\end{proposition}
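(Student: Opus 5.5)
The plan is to verify directly that $(V,\{\,,\,\}_{\pm},\circ_{\pm})$ is a Poisson algebra and that the identities \eqref{eq:pmc}, \eqref{eq:pmd} hold with $[\,,\,]_V,\cdot_V$ replaced by $\{\,,\,\}_\pm,\circ_\pm$. It suffices to treat $\{\,,\,\}_+,\circ_+$, because the minus case is the same computation with $S$ replaced by $-S$. Balancedness, $A$-invariance and equivalence are all linear in $S$, and the only places where $S$ appears quadratically carry the common sign $(\pm2)^2=4$.

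First I will note that the new operations are well defined. Balancedness \eqref{eq:bal} gives $\rho(S(u))v=-\rho(S(v))u$ and $\zeta(S(u))v=\zeta(S(v))u$, so $\{\,,\,\}_+$ is skew-symmetric and $\circ_+$ is commutative. I then check the module-algebra compatibilities, which are linear in $S$:
\begin{itemize}
\item For $\rho(a)\{u,v\}_+=\{\rho(a)u,v\}_++\{u,\rho(a)v\}_+$, the $\lambda$-part is \eqref{eq:pmc}. The $S$-part needs $\rho(a)\rho(S(u))v-\rho(S(\rho(a)u))v-\rho(S(u))\rho(a)v=0$. Using $A$-invariance (mass nonzero), $S(\rho(a)u)=[a,S(u)]$, and this is exactly the representation property of $\rho$.
\item The $\zeta$ and mixed identities are handled the same way. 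One replaces $S(\rho(a)u)$ and $S(\zeta(a)u)$ by $[a,S(u)]$ and $a\cdot S(u)$, and then invokes \eqref{eq:repp} and the associativity and commutativity of $\zeta$. In identities such as $[u,\zeta(a)v]_+=-(\rho(a)u)\circ_+v+\zeta(a)\{u,v\}_+$, balancedness is also used to move $S$ onto the appropriate argument.
\end{itemize}

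Next come the Poisson axioms for $(V,\{\,,\,\}_+,\circ_+)$. Each of Jacobi, associativity and Leibniz, once expanded, splits into three parts:
\begin{itemize}
\item a $\lambda^2$ part, which vanishes because $V$ is Poisson;
\item a $\lambda S$ cross part;
\item an $S^2$ part.
\end{itemize}
The $S^2$ parts are precisely the left-hand sides of \eqref{eq:ipf2}, \eqref{eq:ipf3} and \eqref{eq:ipf4} in Lemma~\ref{lem:baie}\ref{it:balg2}, once $\rho([S(u),S(v)])$ and $\zeta(S(u)\cdot S(v))$ are rewritten through the module and invariance relations. So they vanish by that lemma, since $\kappa\neq0$.

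For the cross terms I will apply equivalence \eqref{eq:eqvm} with $\mu=\lambda$ to rewrite each term as $\lambda[\rho(S(u))v,w]_V$ or $\lambda(\zeta(S(u))v)\cdot_Vw$. The remaining terms then match:
\begin{itemize}
\item \eqref{eq:ipf5} together with \eqref{eq:pmc} for Jacobi;
\item \eqref{eq:ipf6} together with \eqref{eq:pmc} for associativity;
\item \eqref{eq:ipf1} together with \eqref{eq:pmd} for Leibniz.
\end{itemize}

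The main obstacle I expect is the Leibniz cross term. There the terms of types $\rho(S(\cdot))(\cdot\cdot_V\cdot)$, $\zeta(S(\cdot))[\cdot,\cdot]_V$ and $[\cdot,\zeta(S(\cdot))\cdot]_V$ must all cancel. I would first use \eqref{eq:pmd} and balancedness to bring everything to the shape of \eqref{eq:ipf1}, and only then use \eqref{eq:eqvm} for any leftover mixed terms.

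Finally, the verification of the module identities and Poisson axioms with $-2S$ in place of $-2S$ gives the $\{\,,\,\}_-,\circ_-$ statement.
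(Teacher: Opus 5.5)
Your proposal is correct and follows the paper's route: you split Jacobi, associativity and Leibniz into $\lambda^2$, cross and $S^2$ parts, kill the $S^2$ parts by \eqref{eq:ipf2}--\eqref{eq:ipf4}, and kill the cross parts by \eqref{eq:eqvm} with \eqref{eq:ipf5} and \eqref{eq:ipf6}, resp.\ by \eqref{eq:ipf1} with \eqref{eq:pmd}; your $S\mapsto -S$ reduction and explicit module-compatibility checks make precise what the paper leaves as ``analogous'' and ``readily verified'' (your last sentence should read $+2S$ in place of $-2S$). For the Leibniz cross term $X=-[u,\zeta(S(v))w]_V+\zeta(S(w))[u,v]_V+\zeta(S(v))[u,w]_V$ you will not need \eqref{eq:eqvm}: the paper evaluates $X$ once via \eqref{eq:pmd} and balancedness and once via \eqref{eq:ipf1}, obtaining $Y$ and $-Y$ with $Y=[u,\zeta(S(v))w]_V-\rho(S(u))(v\cdot_V w)$, hence $X=0$.
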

\begin{proof}
	For brevity, we prove only the case $(V, \{\,,\,\}_{+}, \circ_{+}, \rho, \zeta)$; the argument for $(V, \{\,,\,\}_{-}, \circ_{-}, \rho, \zeta)$ proceeds analogously.
	Let $u, v, w \in V$. Then
	\begin{align*}
		& \{\{u, v\}_{+}, w\}_{+} = \lambda^2 [[u, v]_V, w]_V - 2 \lambda [\rho(S(u))v, w] + 2 \lambda \rho(S(w))([u, v]_V) - 4 \rho(S(w))\rho(S(u))v \\
		& \overset{\eqref{eq:bal},\eqref{eq:eqvm}}{=} \lambda^2 [[u, v]_V, w]_V - 4 \lambda \rho(S([u, v]_V))w - 4 \rho(S(w))\rho(S(u))v
	\end{align*}
	Therefore,
	\begin{equation*}
		\{\{u, v\}_{+}, w\}_{+} + \{\{v, w\}_{+}, u\}_{+} + \{\{w, u\}_{+}, v\}_{+} \overset{\eqref{eq:ipf2},\eqref{eq:ipf5}}{=} 0.
	\end{equation*}
	That is, $(V, \{\,,\,\}_{+})$ is a Lie algebra.
	By
	\begin{align*}
		& (u \circ_{+} v) \circ_{+} w = \lambda^2 (u \cdot_V v) \cdot_V w - 2\lambda (\zeta(S(u))v) \cdot_V w - 2\lambda \zeta(S(u \cdot_V v))w + 4 \zeta(S(\zeta(S(u))v))w \\
		& \overset{\eqref{eq:bal}, \eqref{eq:eqvm}}{=} \lambda^2 (u \cdot_V v) \cdot_V w - 4 \lambda \zeta(S(u \cdot_V v))w + 4 \zeta(S(w))\zeta(S(u))v,
	\end{align*}
	we have
	\begin{equation*}
		(u \circ_{+} v) \circ_{+} w - u \circ_{+} (v \circ_{+} w) \overset{\eqref{eq:ipf3}, \eqref{eq:ipf6}}{=} 0.
	\end{equation*}
	That is, $(V, \circ_{+})$ is a commutative associative algebra.
	Furthermore, we have
	\begin{align*}
		& \{u, v \circ_{+} w\}_{+} - \{u, v\}_{+} \circ_{+} w - v \circ_{+} \{u, w\}_{+}                          \\
		& =\lambda [u, \lambda v \cdot_V w - 2 \zeta(S(v))w]_V - 2 \rho(S(u)) (\lambda v \cdot_V w - 2 \zeta(S(v))w)            \\
		& \quad - \lambda (\lambda [u, v]_V - 2 \rho(S(u))v) \cdot_V w + 2 \zeta(S(w))(\lambda [u, v]_V - 2 \rho(S(u))v)          \\
		& \quad - \lambda v \cdot_V(\lambda [u, w]_V - 2 \rho(S(u))w) + 2 \zeta(S(v))(\lambda [u, w]_V - 2 \rho(S(u))w)          \\
		& \overset{\eqref{eq:pmd}}{=} - 2 \lambda [u, \zeta(S(v))w]_V+ 2 \lambda \zeta(S(w))([u, v]_V) + 2 \lambda \zeta(S(v))([u, w]_V)  \\
		& \quad + 4\rho(S(u))\zeta(S(v))w - 4\zeta(S(w))\rho(S(u))v - 4\zeta(S(v))\rho(S(u))w                       \\
		& \overset{\eqref{eq:bal}}{=} - 2 \lambda [u, \zeta(S(v))w]_V + 2 \lambda \zeta(S(w))([u, v]_V) + 2 \lambda \zeta(S(v))([u, w]_V) \\
		& \quad\;\; - 4\rho(S(v) \cdot S(w))u - 4\zeta([S(u), S(v)])w - 4\zeta([S(u), S(w)])v                         \\
		& \overset{\eqref{eq:ipf4}}{=} - 2 \lambda [u, \zeta(S(v))w]_V+ 2 \lambda \zeta(S(w))([u, v]_V) + 2 \lambda \zeta(S(v))([u, w]_V).
	\end{align*}
	Note that
	\begin{align*}
		& - [u, \zeta(S(v))w]_V+ \zeta(S(w))([u, v]_V) + \zeta(S(v))([u, w]_V)                                                     \\
		& \overset{\eqref{eq:pmd}}{=} [u, \zeta(S(v))w]_V + (\rho(S(w))u) \cdot_V v + (\rho(S(v))u) \cdot_V w \overset{\eqref{eq:pmd}}{=} [u, \zeta(S(v))w]_V - \rho(S(u))(v \cdot_V w)
	\end{align*}
	and
	\begin{align*}
		& - [u, \zeta(S(v))w]_V + \zeta(S(w))([u, v]_V) + \zeta(S(v))([u, w]_V)                             \\
		& \overset{\eqref{eq:ipf1}}{=} - [u, \zeta(S(v))w]_V - \rho(S(v \cdot_V w)) u = - [u, \zeta(S(v))w]_V + \rho(S(u))(v \cdot_V w).
	\end{align*}
	Hence,
	\begin{align*}
		& \{u, v \circ_{+} w\}_{+} - \{u, v\}_{+} \circ_{+} w - v \circ_{+} \{u, w\}_{+}              \\
		& = - 2 \lambda [u, \zeta(S(v))w]_V+ 2 \lambda \zeta(S(w))([u, v]_V) + 2 \lambda \zeta(S(v))([u, w]_V) = 0.
	\end{align*}
	Therefore, $(V, \{\,,\,\}_{+}, \circ_{+})$ is a Poisson algebra.
	One can now readily verify that $(V, \{\,,\,\}_{+}, \circ_{+}, \rho, \zeta)$ is an $A$-module Poisson algebra.
\end{proof}

\begin{theorem}\label{thm:eo2ps}
	Let $(A, [\,,\,], \cdot)$ be a Poisson algebra, $(V, [\,,\,]_V, \cdot_V, \rho, \zeta)$ be an $A$-module Poisson algebra, and $T, S: V \rightarrow A$ be two linear maps.
	If $S$ satisfies Eqs.~\eqref{eq:bal}, \eqref{eq:ainv} and \eqref{eq:eqvm}, and $T$ is an extended $\mathcal{O}$-operator of weight $\lambda$ with extension $S$ of mass $(\kappa, \mu)$ on $(A, [\,,\,], \cdot)$ associated to $(V, [\,,\,]_V, \cdot_V, \rho, \zeta)$,
	then $(V$, $\{\,,\,\}_T$, $\circ_T)$ is a Poisson algebra, where $\{\,,\,\}_T$ and $\circ_T$ are defined by Eqs.~\eqref{eq:soo2p1} and \eqref{eq:soo2p2} respectively.
\end{theorem}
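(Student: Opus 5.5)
By Lemma~\ref{lem:npaeq}, it suffices to verify Eqs.~\eqref{eq:soo1}--\eqref{eq:soo3} for the given $T$. Since $T$ is an extended $\mathcal{O}$-operator with extension $S$ of mass $(\kappa,\mu)$, Eqs.~\eqref{eq:ext-o-lie} and \eqref{eq:ext-o-assoc} identify
\begin{equation*}
\mathcal{L}_T^\lambda(u,v)=\kappa[S(u),S(v)]+\mu S([u,v]_V),\qquad
\mathcal{A}_T^\lambda(u,v)=\kappa S(u)\cdot S(v)+\mu S(u\cdot_V v).
\end{equation*}
Substituting these, each of the three identities splits into a $\kappa$-part and a $\mu$-part. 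The plan is to show that each part vanishes separately, using Lemma~\ref{lem:baie}.

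For Eq.~\eqref{eq:soo1}, the $\mu$-part is
\[
\mu\big(\rho(S([u,v]_V))w+\rho(S([v,w]_V))u+\rho(S([w,u]_V))v\big),
\]
which vanishes by Eq.~\eqref{eq:ipf5}. For the $\kappa$-part, I write $\rho([S(u),S(v)])=\rho(S(u))\rho(S(v))-\rho(S(v))\rho(S(u))$. Then I use balancedness \eqref{eq:bal} to convert terms into the cyclic form of Eq.~\eqref{eq:ipf2}. For instance, $-\rho(S(v))\rho(S(u))w=\rho(S(v))\rho(S(w))u$. Summing cyclically then gives twice the cyclic sum in \eqref{eq:ipf2}, which is zero. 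Alternatively, the computation in the proof of Lemma~\ref{lem:baie}\ref{it:balg2} shows directly that $\kappa\rho([S(u),S(v)])w=-\kappa\rho(S(w))\rho(S(u))v$, and the cyclic sum of these again vanishes by \eqref{eq:ipf2}.

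For Eq.~\eqref{eq:soo2}, the $\mu$-part is exactly Eq.~\eqref{eq:ipf6}. The $\kappa$-part is
\[
\kappa\zeta(S(u)\cdot S(v))w-\kappa\zeta(S(v)\cdot S(w))u=\kappa\zeta(S(u))\zeta(S(v))w-\kappa\zeta(S(v))\zeta(S(w))u,
\]
using that $\zeta$ is a module of the commutative associative algebra. I would rewrite the second term with \eqref{eq:ipf3}, via commutativity $\zeta(S(v))\zeta(S(w))=\zeta(S(w))\zeta(S(v))$ and a relabeling, to match the first term.

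For Eq.~\eqref{eq:soo3}, the $\kappa$-part is literally Eq.~\eqref{eq:ipf4}. The $\mu$-part is
\[
\mu\big(\zeta(S([u,v]_V))w+\zeta(S([u,w]_V))v+\rho(S(v\cdot_V w))u\big),
\]
which is $\mu$ times Eq.~\eqref{eq:ipf1}, valid for any balanced $S$.

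The main obstacle is bookkeeping in the $\kappa$-parts of \eqref{eq:soo1} and \eqref{eq:soo2}, where the expressions must be matched to the precise index patterns of \eqref{eq:ipf2} and \eqref{eq:ipf3}. I expect the arguments inside the proof of Lemma~\ref{lem:baie} to supply these directly. Everything else is immediate once the extended $\mathcal{O}$-operator identities are substituted into Lemma~\ref{lem:npaeq}.
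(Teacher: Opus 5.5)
Your proof is correct and follows the paper's argument. Like the paper, you reduce to Eqs.~\eqref{eq:soo1}--\eqref{eq:soo3} via Lemma~\ref{lem:npaeq}, substitute the extended $\mathcal{O}$-operator identities, and kill the $\kappa$- and $\mu$-parts with \eqref{eq:ipf2}/\eqref{eq:ipf5}, \eqref{eq:ipf3}/\eqref{eq:ipf6} and \eqref{eq:ipf4}/\eqref{eq:ipf1} from Lemma~\ref{lem:baie}; your extra bookkeeping (the $\kappa$-part of \eqref{eq:soo1} equals twice the cyclic sum in \eqref{eq:ipf2}, plus the relabeling in \eqref{eq:ipf3}) just makes explicit what the paper leaves implicit.
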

\begin{proof}
	By Lemma~\ref{lem:npaeq}, it suffices to show that Eqs.~\eqref{eq:soo1}, \eqref{eq:soo2} and \eqref{eq:soo3} hold.
	Let $u, v, w \in V$.
	Since $T$ is an extended $\mathcal{O}$-operator of weight $\lambda$ with extension $S$ of mass $(\kappa, \mu)$ on $(A, [\,,\,], \cdot)$ associated $(V, [\,,\,]_V, \cdot_V, \rho, \zeta)$, we have
	\begin{equation*}
		\mathcal{L}_{T}^{\lambda}(u, v) = \kappa [S(u), S(v)] + \mu S([u, v]_V), \;\;
		\mathcal{A}_{T}^{\lambda}(u, v) = \kappa S(u) \cdot S(v) + \mu S(u \cdot_V v).
	\end{equation*}
	Therefore, by Lemma~\ref{lem:baie}, we have
	\begin{align*}
		 & \rho\big(\mathcal{L}_{T}^{\lambda}(u, v)\big)w + \rho\big(\mathcal{L}_{T}^{\lambda}(v, w)\big) u + \rho\big(\mathcal{L}_{T}^{\lambda}(w, u))\big)v                \\
		 & = \kappa \rho\big([S(u), S(v)]\big)w + \mu \rho\big(S([u, v]_V)\big)w + \kappa \rho\big([S(v), S(w)]\big)u + \mu \rho\big(S([v, w]_V)\big)u                    \\
		 & \quad + \kappa \rho\big([S(w), S(u)]\big)v + \mu \rho\big(S([w, u]_V)\big)v \overset{\eqref{eq:ipf2} ,\eqref{eq:ipf5}}{=} 0,                            \\
		 & \zeta\big(\mathcal{A}_{T}^{\lambda}(u, v)\big)w -\zeta\big(\mathcal{A}_{T}^{\lambda}(v, w)\big) u                                         \\
		 & = \kappa \zeta(S(u) \cdot S(v) )w + \mu \zeta(S(u \cdot_V v)) w - \kappa \zeta(S(v) \cdot S(w) )u - \mu \zeta(S(v \cdot_V w)) u \overset{\eqref{eq:ipf3}, \eqref{eq:ipf6}}{=} 0, \\
		 & \zeta\big( \mathcal{L}_{T}^{\lambda}(u, v) \big)w + \zeta\big( \mathcal{L}_{T}^{\lambda}(u, w) \big)v + \rho\big(\mathcal{A}_{T}^{\lambda}(v, w)\big)u               \\
		 & = \kappa \zeta\big([S(u), S(v)]\big)w + \mu \zeta\big(S([u, v]_V)\big)w + \kappa \zeta\big([S(u), S(w)]\big)v + \mu \zeta\big(S([u, w]_V)\big)v                  \\
		 & \quad + \kappa \rho(S(v) \cdot S(w) )u + \mu \rho(S(v \cdot_V w)) u \overset{\eqref{eq:ipf1}, \eqref{eq:ipf4}}{=} 0.
	\end{align*}
	That is, Eqs.~\eqref{eq:soo1}-\eqref{eq:soo3} hold.
	The proof is completed.
\end{proof}

\begin{corollary}
	Let $(A, [\,,\,], \cdot)$ be a Poisson algebra, and $S, T: A \rightarrow A$ be two linear maps.
	If $S$ is balanced and $A$-invariant of mass $\kappa$ associated to $(A, \ad_{[,]}, L_\cdot)$, and $T: A \rightarrow A$ is an extended $\mathcal{O}$-operator of weight $\lambda$ with extension $S$ of mass $(\kappa, \kappa)$ or $(\kappa, 0)$ associated to $(A, [\,,\,], \cdot, \ad_{[,]}, L_\cdot)$,
	then $(A, \{\,,\,\}_T, \circ_T)$ is a Poisson algebra, where
	$\{\,,\,\}_T, \circ_T: A \otimes A \rightarrow A$ are defined by (for all $a, b \in A$)
	\begin{align*}
		\{a, b\}_T & := [T(a), b] + [a, T(b)] + \lambda [a, b],     \\
		a \circ_T b & := T(a) \cdot b + a \cdot T(b) + \lambda a \cdot b.
	\end{align*}
\end{corollary}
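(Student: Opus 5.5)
The plan is to derive this corollary directly from Theorem~\ref{thm:eo2ps}, applied to the adjoint $A$-module Poisson algebra $(V,[\,,\,]_V,\cdot_V,\rho,\zeta)=(A,[\,,\,],\cdot,\ad_{[,]},L_\cdot)$. First I will check that the operations agree. With $\rho=\ad$ and $\zeta=L_\cdot$, Eq.~\eqref{eq:soo2p1} gives
$\{a,b\}_T=[T(a),b]-[T(b),a]+\lambda[a,b]=[T(a),b]+[a,T(b)]+\lambda[a,b]$.
Similarly, Eq.~\eqref{eq:soo2p2} gives
$a\circ_T b=T(a)\cdot b+T(b)\cdot a+\lambda a\cdot b=T(a)\cdot b+a\cdot T(b)+\lambda a\cdot b$,
using commutativity. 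So these are exactly the operations in the statement.

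Next I will verify the hypotheses of Theorem~\ref{thm:eo2ps} on $S$. Balancedness \eqref{eq:bal} and $A$-invariance of mass $\kappa$ \eqref{eq:ainv} are assumed. It remains to check Eq.~\eqref{eq:eqvm} with mass equal to the second mass parameter $\mu$, which is either $\kappa$ or $0$. When $\mu=0$ the condition is vacuous, so Theorem~\ref{thm:eo2ps} applies with mass $(\kappa,0)$ immediately.

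The case $\mu=\kappa$ is the only real step. If $\kappa=0$ it again reduces to the vacuous case. If $\kappa\neq0$, $A$-invariance gives $[a,S(v)]=S([a,v])$ and $a\cdot S(v)=S(a\cdot v)$, and I need to deduce $\kappa[S([u,v]),w]=\kappa[[S(u),v],w]$ and $\kappa S(u\cdot v)\cdot w=\kappa (S(u)\cdot v)\cdot w$. Invariance with $a=u$ and balancedness give $S([u,v])=[u,S(v)]=-[S(v),u]=[S(u),v]$. Likewise $S(u\cdot v)=u\cdot S(v)=S(v)\cdot u=S(u)\cdot v$, using $L(S(v))u=L(S(u))v$. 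Substituting these identities into the left-hand sides yields \eqref{eq:eqvm} with $\mu=\kappa$.

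Theorem~\ref{thm:eo2ps} then shows that $(A,\{\,,\,\}_T,\circ_T)$ is a Poisson algebra. I expect no serious obstacle. The only point needing care is the equivalence-of-mass-$\kappa$ verification, and in particular remembering that when $\kappa=0$ the mass-$(\kappa,\kappa)$ case coincides with the mass-$(\kappa,0)$ case.
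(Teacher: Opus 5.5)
Your proposal is correct and follows the paper's proof. Like the paper, you specialize Theorem~\ref{thm:eo2ps} to the adjoint $A$-module Poisson algebra after observing that a balanced, $A$-invariant $S$ is equivalent of mass $\kappa$ or $0$. You additionally make explicit two things the paper leaves to the reader: the identities $S([u,v])=[S(u),v]$ and $S(u\cdot v)=S(u)\cdot v$, and the matching of $\{\,,\,\}_T,\circ_T$ with Eqs.~\eqref{eq:soo2p1}--\eqref{eq:soo2p2}.
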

\begin{proof}
	Note that if $S$ is balanced and $A$-invariant of mass $\kappa$ associated to $(A, \ad_{[,]}, L_\cdot)$, then $S$ is equivalent of mass $\kappa$ or $0$ associated to $(A, [\,,\,], \cdot, \ad_{[,]}, L_\cdot)$.
	Therefore, the conclusion follows from Theorem~\ref{thm:eo2ps}.
\end{proof}

Let $A$ and $V$ be vector spaces, and $\pi_{\pm}: V \rightarrow A$ be two linear maps.
Set
\begin{equation*}
	T := \frac{1}{2}(\pi_{+} + \pi_{-}), \;\;
	S := \frac{1}{2}(\pi_{+} - \pi_{-}),
\end{equation*}
called the {\bf symmetrizer} and {\bf antisymmetrizer} of $\pi_{\pm}$, respectively.
Note that $\pi_{\pm}$ can be resolved from $T$ and $S$ by $\pi_{\pm} = T \pm S$.

\begin{proposition}\label{prop:pm2p}
	Let $(A, [\,,\,], \cdot)$ be a Poisson algebra, $(V, [\,,\,]_V, \cdot_V, \rho, \zeta)$ be an $A$-module Poisson algebra, and $\pi_{\pm}: V \rightarrow A$ be two linear maps.
	Let $T, S$ be symmetrizer and antisymmetrizer of $\pi_{\pm}$ respectively.
	If $S$ is balanced associated to $(V, \rho, \zeta)$, then $(V, \{\,,\,\}, \circ)$ is a Poisson algebra, where $\{\,,\,\}, \circ: V \otimes V \rightarrow V$ are respectively defined by (for all $u, v \in V$)
	\begin{align*}
		\{u, v\} & := \rho(\pi_{+}(u))v - \rho(\pi_{-}(v))u + \lambda [u, v]_V,   \\
		u \circ v & := \zeta(\pi_{+}(u))v + \zeta(\pi_{-}(v))u + \lambda u \cdot_V v,
	\end{align*}
	if and only if Eqs.~\eqref{eq:soo1}-\eqref{eq:soo3} hold.
	If in addition $T$ is an $\mathcal{O}$-operator of weight $\lambda$ on $(A, [\,,\,], \cdot)$ associated to $(V, [\,,\,]_V, \cdot_V, \rho, \zeta)$, then $(V, \{\,,\,\}, \circ)$ is a Poisson algebra and $T$ is a homomorphism of Poisson algebras between $(V, \{\,,\,\}, \circ)$ and $(A, [\,,\,], \cdot)$.
\end{proposition}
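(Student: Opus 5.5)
The plan is to reduce everything to Lemma~\ref{lem:npaeq}. Substituting $\pi_{\pm} = T \pm S$ into the definitions gives
\begin{align*}
\{u, v\} &= \rho(T(u))v - \rho(T(v))u + \lambda[u,v]_V + \rho(S(u))v + \rho(S(v))u,\\
u \circ v &= \zeta(T(u))v + \zeta(T(v))u + \lambda u\cdot_V v + \zeta(S(u))v - \zeta(S(v))u .
\end{align*}
Since $S$ is balanced, Eq.~\eqref{eq:bal} gives $\rho(S(u))v + \rho(S(v))u = 0$ and $\zeta(S(u))v - \zeta(S(v))u = 0$. Hence $\{\,,\,\} = \{\,,\,\}_T$ and $\circ = \circ_T$, where the latter are defined by Eqs.~\eqref{eq:soo2p1} and \eqref{eq:soo2p2}. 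So the product structure on $V$ built from the pair $\pi_\pm$ is literally the one built from the symmetrizer $T$ alone.

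With this identification, the first assertion is exactly Lemma~\ref{lem:npaeq} applied to the linear map $T$ and the given $\lambda$. That is, $(V,\{\,,\,\},\circ)$ is a Poisson algebra if and only if Eqs.~\eqref{eq:soo1}--\eqref{eq:soo3} hold, with $\mathcal{L}_T^\lambda$ and $\mathcal{A}_T^\lambda$ computed from $T$. I would remark explicitly that these equations are understood for $T$, the symmetrizer.

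For the second assertion, suppose $T$ is an $\mathcal{O}$-operator of weight $\lambda$. Then $\mathcal{L}_T^\lambda = 0$ and $\mathcal{A}_T^\lambda = 0$ identically, so Eqs.~\eqref{eq:soo1}--\eqref{eq:soo3} hold trivially and $(V,\{\,,\,\},\circ)$ is a Poisson algebra. Alternatively, one can cite Proposition~\ref{prop:o2pp} directly, which also says $(V,\{\,,\,\}_T,\circ_T)$ is a Poisson algebra.

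The homomorphism property is then immediate. The defining identities of an $\mathcal{O}$-operator read $[T(u),T(v)] = T(\{u,v\}_T) = T(\{u,v\})$ and $T(u)\cdot T(v) = T(u\circ_T v) = T(u\circ v)$, which is exactly what is needed.

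The only real point is the cancellation step in the first paragraph, checking that the $S$-terms vanish with the correct signs under Eq.~\eqref{eq:bal}. I expect no genuine obstacle; everything after that step is a direct invocation of earlier results.
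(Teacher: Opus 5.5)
Your proposal is correct and follows the paper's proof. The balanced condition collapses $\{\,,\,\}$ and $\circ$ to $\{\,,\,\}_T$ and $\circ_T$, Lemma~\ref{lem:npaeq} then gives the equivalence, and Proposition~\ref{prop:o2pp} (or directly the vanishing of $\mathcal{L}_T^\lambda$ and $\mathcal{A}_T^\lambda$) handles the $\mathcal{O}$-operator case, where your explicit sign check of the $S$-terms is the step the paper compresses into a single citation of Eq.~\eqref{eq:bal}.
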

\begin{proof}
	Since $S$ is balanced associated to $(V, \rho, \zeta)$, we have
	\begin{align*}
		\{u, v\} & = \rho(\pi_{+}(u))v - \rho(\pi_{-}(v))u + \lambda [u, v]_V \overset{\eqref{eq:bal}}{=} \rho(T(u))v - \rho(T(v))u + \lambda [u, v]_V,      \\
		u \circ v & = \zeta(\pi_{+}(u))v + \zeta(\pi_{-}(v))u + \lambda u \cdot_V v \overset{\eqref{eq:bal}}{=} \zeta(T(u))v + \zeta(T(v))u + \lambda u \cdot_V v,
	\end{align*}
	for all $u, v \in V$.
	Hence, by Lemma~\ref{lem:npaeq}, $(V, \{\,,\,\}, \circ)$ is a Poisson algebra if and only if Eqs.~\eqref{eq:soo1}-\eqref{eq:soo3} hold.
	Furthermore, if in addition $T$ is an $\mathcal{O}$-operator of weight $\lambda$ on $(A, [\,,\,], \cdot)$ associated to $(V, [\,,\,]_V, \cdot_V, \rho, \zeta)$, then $(V, \{\,,\,\}, \circ)$ is a Poisson algebra and $T$ is a homomorphism of Poisson algebras between $(V, \{\,,\,\}, \circ)$ and $(A, [\,,\,], \cdot)$ by Proposition~\ref{prop:o2pp}.
\end{proof}

The following results provide equivalent characterizations of extended $\mathcal{O}$-operators under certain conditions.

\begin{proposition}\label{prop:iphr}
	Let $(A, [\,,\,], \cdot)$ be a Poisson algebra, $(V, [\,,\,]_V, \cdot_V, \rho, \zeta)$ be an $A$-module Poisson algebra, and $\pi_{\pm}: V \rightarrow A$ be two linear maps.
	Let $T, S$ be symmetrizer and antisymmetrizer of $\pi_{\pm}$ respectively.
	Suppose that $S$ is an $A$-module homomorphism from $(V, \rho, \zeta)$ to $(A, \ad_{[,]}, L_\cdot)$.
	Then $T$ is an extended $\mathcal{O}$-operator of weight $\lambda$ with extension $S$ of mass $(-1, \lambda)$ (resp. $(-1, -\lambda)$) on $(A, [\,,\,], \cdot)$ associated $(V, [\,,\,]_V, \cdot_V, \rho, \zeta)$ if and only if
	\begin{align}
		[\pi_{+}(u), \pi_{+}(v)]  & = \pi_{+}(\{u, v\}_T) \;\; \text{(resp. $[\pi_{-}(u), \pi_{-}(v)] = \pi_{-}(\{u, v\}_T)$)}, \label{eq:pihl} \\
		\pi_{+}(u) \cdot \pi_{+}(v) & = \pi_{+}(u \circ_T v) \;\; \text{(resp. $\pi_{-}(u) \cdot \pi_{-}(v) = \pi_{-}(u \circ_T v)$)}, \label{eq:piha}
	\end{align}
	where $\{\,,\,\}_T$ and $\circ_T$ are defined by Eqs.~\eqref{eq:soo2p1} and \eqref{eq:soo2p2} respectively.
\end{proposition}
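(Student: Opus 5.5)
Substitute $\pi_{\pm} = T \pm S$ into both sides of \eqref{eq:pihl} and \eqref{eq:piha}. Then use the $A$-module homomorphism property of $S$ to rewrite everything as the defect of $T$ being an $\mathcal{O}$-operator. The identity should then read off as exactly \eqref{eq:ext-o-lie} and \eqref{eq:ext-o-assoc} with mass $(-1,\pm\lambda)$. No balancedness of $S$ is assumed, so the computation must use only the conditions $[a,S(v)]=S(\rho(a)v)$ and $a\cdot S(v)=S(\zeta(a)v)$ for all $a\in A$, $v\in V$.

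For the Lie part with $\pi_+$, expand the left-hand side:
\[
[\pi_+(u),\pi_+(v)] = [T(u),T(v)] + [T(u),S(v)] + [S(u),T(v)] + [S(u),S(v)].
\]
By $A$-invariance, $[T(u),S(v)] = S(\rho(T(u))v)$ and $[S(u),T(v)] = -[T(v),S(u)] = -S(\rho(T(v))u)$. Expanding the right-hand side gives
\[
\pi_+(\{u,v\}_T) = T\big(\rho(T(u))v-\rho(T(v))u+\lambda[u,v]_V\big) + S\big(\rho(T(u))v-\rho(T(v))u\big) + \lambda S([u,v]_V).
\]
The $S(\rho(T(\cdot))\cdot)$ terms cancel on the two sides. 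What remains is that \eqref{eq:pihl} for $\pi_+$ is equivalent to
\[
\mathcal{L}_T^\lambda(u,v) = -[S(u),S(v)] + \lambda S([u,v]_V),
\]
which is \eqref{eq:ext-o-lie} with $(\kappa,\mu)=(-1,\lambda)$.

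The commutative part works the same way. We have $T(u)\cdot S(v) = S(\zeta(T(u))v)$ and $S(u)\cdot T(v) = T(v)\cdot S(u) = S(\zeta(T(v))u)$. These match the $S$-part of $\pi_+(u\circ_T v)$, which leaves
\[
\mathcal{A}_T^\lambda(u,v) = -S(u)\cdot S(v) + \lambda S(u\cdot_V v),
\]
i.e. \eqref{eq:ext-o-assoc}.

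For $\pi_-$ the cross terms change sign twice, so they again cancel against $-S(\cdots)$ on the right. The $\lambda$-term becomes $-\lambda S([u,v]_V)$, while the $[S(u),S(v)]$ term keeps sign $+$ on the left. This gives mass $(-1,-\lambda)$, and the commutative case is identical.

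Both directions hold at once, since each step is an equivalence of identities valid for all $u,v$. The only delicate point is bookkeeping of signs, in particular using antisymmetry to convert $[S(u),T(v)]$ into the invariance form. I expect no real obstacle beyond this.
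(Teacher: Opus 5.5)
Your proposal is correct and follows the paper's own argument. You substitute $\pi_{\pm}=T\pm S$ and use only the $A$-module homomorphism property of $S$ to rewrite the cross terms as $\pm\big(S(\rho(T(u))v)-S(\rho(T(v))u)\big)$, and likewise for the commutative product. These cancel against the $S$-part of $\pi_{\pm}(\{u,v\}_T)$, and what remains is exactly Eqs.~\eqref{eq:ext-o-lie}--\eqref{eq:ext-o-assoc} with mass $(-1,\pm\lambda)$; your sign bookkeeping in the $\pi_-$ case is right.
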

\begin{proof}
	We prove only the mass $(-1, \lambda)$ case, and the mass $(-1, -\lambda)$ case is analogous.
	\begin{align*}
		 & [\pi_{+}(u), \pi_{+}(v)] - \pi_{+}(\{u, v\}_T) = [(T+S)(u), (T+S)(v)] - (T+S)(\rho(T(u))v - \rho(T(v))u + \lambda [u,v]_V) \\
		 & = [T(u), T(v)] + [S(u), S(v)] - T(\rho(T(u))v - \rho(T(v))u + \lambda [u,v]_V)                       \\
		 & \quad + [T(u), S(v)] + [S(u), T(v)] - S(\rho(T(u))v - \rho(T(v))u + \lambda [u,v]_V)                    \\
		 & =[T(u), T(v)] + [S(u), S(v)] - T(\rho(T(u))v - \rho(T(v))u + \lambda [u,v]_V)                       \\
		 & \quad + S(\rho(T(u))v) - S(\rho(T(v))u) - S(\rho(T(u))v - \rho(T(v))u + \lambda [u,v]_V)                 \\
		 & =[T(u), T(v)] + [S(u), S(v)] - T(\rho(T(u))v - \rho(T(v))u + \lambda [u,v]_V) - \lambda S([u, v]_V).
	\end{align*}
	That is, Eq.~\eqref{eq:pihl} holds if and only if Eq.~\eqref{eq:ext-o-lie} holds with $\kappa = -1$ and $\mu = \lambda$.
	Similarly, we show that Eq.~\eqref{eq:piha} holds if and only if Eq.~\eqref{eq:ext-o-assoc} holds with $\kappa = -1$ and $\mu = \lambda$.
	Hence, $T$ is an extended $\mathcal{O}$-operator of weight $\lambda$ with extension $S$ of mass $(-1, \lambda)$ on $(A, [\,,\,], \cdot)$ associated $(V, [\,,\,]_V, \cdot_V, \rho, \zeta)$ if and only if Eqs.~\eqref{eq:pihl} and \eqref{eq:piha} hold.
\end{proof}

\begin{remark}
	$(A, \{\,,\,\}_T, \circ_{T})$ may not be a Poisson algebra in Proposition~\ref{prop:iphr}.
	However, by Theorem~\ref{thm:eo2ps}, if Eq.~\eqref{eq:eqvm} holds with $\mu = \lambda$,
	then $(A, \{\,,\,\}_T, \circ_{T})$ is indeed a Poisson algebra under the conditions of Proposition~\ref{prop:iphr}.
\end{remark}

\begin{theorem}\label{thm:opm}
	Let $(A, [\,,\,], \cdot)$ be a Poisson algebra, $(V, [\,,\,]_V, \cdot_V, \rho, \zeta)$ be an $A$-module Poisson algebra, and $\pi_{\pm}: V \rightarrow A$ be two linear maps. 
	Let $T, S$ be symmetrizer and antisymmetrizer of $\pi_{\pm}$ respectively.
	Suppose that $S$ is a balanced $A$-module homomorphism associated to $(V, \rho, \zeta)$ satisfying Eq.~\eqref{eq:eqvm} with $\mu = \lambda$.
	Then the following conditions are equivalent.
	\begin{enumerate}[label=(\roman*)]
		\item\label{it:oeoheo} 
			$T$ is an extended $\mathcal{O}$-operator of weight $\lambda$ with extension $S$ of mass $(-1, \lambda)$ (resp. $(-1, -\lambda)$) on $(A, [\,,\,], \cdot)$ associated to $(V, [\,,\,]_V, \cdot_V, \rho, \zeta)$.
		\item\label{it:oeohh} 
			$(V, \{\,,\,\}_{T}, \circ_{T})$ is a Poisson algebra, and $\pi_{+}$ (resp. $\pi_{-}$) is a homomorphism of Poisson algebras between $(V, \{\,,\,\}_{T}, \circ_{T})$ and $(A, [\,,\,], \cdot)$,
			where $\{\,,\,\}_T$ and $\circ_T$ are defined by Eqs.~\eqref{eq:soo2p1} and \eqref{eq:soo2p2} respectively.
			
		\item\label{it:oeoho}  
			$\pi_{+}$ (resp. $\pi_{-}$) is an $\mathcal{O}$-operator of weight 1 on $(A, [\,,\,], \cdot)$ associated to $(V, \{\,,\,\}_{+}, \circ_{+}, \rho, \zeta)$ (resp. $(V, \{\,,\,\}_{-}, \circ_{-}, \rho, \zeta)$)
			where $(V, \{\,,\,\}_{+}, \circ_{+}, \rho, \zeta)$ (resp. $(V, \{\,,\,\}_{-}, \circ_{-}, \rho, \zeta)$) is the $A$-module Poisson algebra obtained in Proposition~\ref{prop:bae2amp}.
	\end{enumerate}
\end{theorem}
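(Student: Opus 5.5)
The plan is to show (i)$\Leftrightarrow$(ii) using Theorem~\ref{thm:eo2ps} and Proposition~\ref{prop:iphr}, and then to show (i)$\Leftrightarrow$(iii) by rewriting the $\mathcal{O}$-operator identities for $\pi_{\pm}$ until they become Eqs.~\eqref{eq:pihl} and \eqref{eq:piha}. Throughout we treat the $\pi_{+}$ case; the $\pi_{-}$ case differs only in signs.

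\emph{(i)$\Leftrightarrow$(ii).} By hypothesis $S$ is a balanced $A$-module homomorphism. So $S$ satisfies Eq.~\eqref{eq:bal} and Eq.~\eqref{eq:ainv} for every $\kappa$, in particular for $\kappa=-1$, and it satisfies Eq.~\eqref{eq:eqvm} with $\mu=\lambda$. Equation \eqref{eq:eqvm} is linear in $\mu$, so it then also holds with $\mu=-\lambda$, which covers the resp.\ case.

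For (i)$\Rightarrow$(ii), Theorem~\ref{thm:eo2ps} with mass $(-1,\pm\lambda)$ shows that $(V,\{\,,\,\}_T,\circ_T)$ is a Poisson algebra. Proposition~\ref{prop:iphr} gives Eqs.~\eqref{eq:pihl} and \eqref{eq:piha}, which say exactly that $\pi_{+}$ (resp.\ $\pi_{-}$) preserves both operations, i.e.\ is a homomorphism of Poisson algebras.

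For (ii)$\Rightarrow$(i), the homomorphism property is precisely Eqs.~\eqref{eq:pihl} and \eqref{eq:piha}. The converse direction of Proposition~\ref{prop:iphr}, which needs only that $S$ is an $A$-module homomorphism, then yields (i).

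\emph{(i)$\Leftrightarrow$(iii).} Here we unwind the definition of an $\mathcal{O}$-operator of weight $1$ associated to $(V,\{\,,\,\}_{+},\circ_{+},\rho,\zeta)$, which is a legitimate $A$-module Poisson algebra by Proposition~\ref{prop:bae2amp}. The two defining identities are
\[
[\pi_+(u),\pi_+(v)]=\pi_+\big(\rho(\pi_+(u))v-\rho(\pi_+(v))u+\{u,v\}_+\big),
\]
together with the analogous identity for $\cdot$ and $\circ_{+}$. Substitute $\pi_{+}=T+S$ and $\{u,v\}_{+}=\lambda[u,v]_V-2\rho(S(u))v$. The $S$-terms inside the argument are $\rho(S(u))v-\rho(S(v))u-2\rho(S(u))v$. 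Balancedness gives $-\rho(S(v))u=\rho(S(u))v$, so these terms cancel, and the argument reduces to $\{u,v\}_T$.

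The same computation works for the product. The $S$-terms are $\zeta(S(u))v+\zeta(S(v))u-2\zeta(S(u))v$, and they vanish because $\zeta(S(v))u=\zeta(S(u))v$. So the argument reduces to $u\circ_T v$.

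In the resp.\ case, $\pi_{-}=T-S$ and $\{u,v\}_{-}=\lambda[u,v]_V+2\rho(S(u))v$. The $S$-terms are $-\rho(S(u))v+\rho(S(v))u+2\rho(S(u))v=0$, and likewise for $\zeta$.

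Hence (iii) is literally Eqs.~\eqref{eq:pihl} and \eqref{eq:piha}. By Proposition~\ref{prop:iphr} these are equivalent to (i).

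The main point requiring care is the bookkeeping of the hypotheses. Mass $\kappa=-1$ is nonzero, so Eq.~\eqref{eq:ainv} is a genuine condition, and it is supplied by the $A$-module homomorphism assumption. The identity Eq.~\eqref{eq:eqvm} is needed with $\mu=\pm\lambda$, which the sign-symmetry above provides. Beyond this, the only nontrivial input is the cancellation of the $S$-terms via Eq.~\eqref{eq:bal}.
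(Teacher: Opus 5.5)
Your proof is correct and follows the paper's argument. You get (i)$\Leftrightarrow$(ii) from Theorem~\ref{thm:eo2ps} and Proposition~\ref{prop:iphr}, and you reduce (iii) to Eqs.~\eqref{eq:pihl}--\eqref{eq:piha} by cancelling the $S$-terms via balancedness, exactly as the paper does. The only difference is that you connect (iii) directly to (i) through Proposition~\ref{prop:iphr}, whereas the paper connects (iii) to (ii) and rebuilds the Poisson structure in (iii)$\Rightarrow$(ii) from the $\mathcal{O}$-operator construction; your route is slightly cleaner because that structure then comes for free from (i)$\Rightarrow$(ii).
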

\begin{proof}
	We prove only the $(-1, \lambda)$ case; the $(-1, -\lambda)$ case is analogous.
	
	\ref{it:oeoheo} $\Longrightarrow$ \ref{it:oeohh}.
	By Theorem~\ref{thm:eo2ps}, $(V, \{\,,\,\}_{T}, \circ_{T})$ is a Poisson algebra. 
	Then Proposition~\ref{prop:iphr} shows that $\pi_{+}$ is a homomorphism of Poisson algebras.
	
	\ref{it:oeohh} $\Longrightarrow$ \ref{it:oeoheo}. 
	It follows from Proposition~\ref{prop:iphr}.
	
	\ref{it:oeohh} $\Longrightarrow$ \ref{it:oeoho}.
	For all $u, v \in V$, we have
	\begin{align*}
		&\rho(\pi_{+}(u))v - \rho(\pi_{+}(v))u + \{u,v\}_{+} =\rho(\pi_{+}(u))v - \rho(\pi_{+}(v))u + \lambda [u,v]_V - 2 \rho(S(u))v \\
		&=\rho(T(u))v - \rho(T(v))u + \lambda [u,v]_V + \rho(S(u))v - \rho(S(v))u - 2 \rho(S(u))v \\
		&=\rho(T(u))v - \rho(T(v))u + \lambda [u,v]_V = \{u, v\}_T.
	\end{align*}
	Therefore, 
	\begin{equation*}
		[\pi_{+}(u), \pi_{+}(v)] - \pi_{+}\big(\rho(\pi_{+}(u))v - \rho(\pi_{+}(v))u + \{u,v\}_{+} \big) = [\pi_{+}(u), \pi_{+}(v)] - \pi_{+}(\{u, v\}_T) = 0.
	\end{equation*}
	Similarly, we have
	\begin{equation*}
		\pi_{+}(u) \cdot \pi_{+}(v) - \pi_{+}\big(\zeta(\pi_{+}(u))v + \zeta(\pi_{+}(v))u + u \circ_{+} v \big) = \pi_{+}(u) \cdot \pi_{+}(v) - \pi_{+}(u \cdot_T v) = 0.
	\end{equation*}
	Hence, $\pi_{+}$ is an $\mathcal{O}$-operator of weight 1 on $(A, [\,,\,], \cdot)$ associated to $(V, \{\,,\,\}_{+}, \circ_{+}, \rho, \zeta)$.
	
	\ref{it:oeoho} $\Longrightarrow$ \ref{it:oeohh}.
	By Proposition~\ref{prop:pm2p}, $\pi_{+}$ is a homomorphism of Poisson algebras between $(V, \{\,,\,\}, \circ)$ and $(A, [\,,\,], \cdot)$, where $\{\,,\,\}, \circ: V \otimes V \to V$ is defined by
	\begin{align*}
		\{u, v\} & := \rho(\pi_{+}(u))v - \rho(\pi_{-}(v))u + \{u, v\}_{+},   \\
		u \circ v & := \zeta(\pi_{+}(u))v + \zeta(\pi_{-}(v))u + u \circ_{+} v.
	\end{align*}
	On the other hand, following the proof of the implication ``(\ref{it:oeohh}) $\Longrightarrow$ (\ref{it:oeoho})'', we obtain 
	\begin{align*}
		\{u, v\} = \{u, v\}_T, \;\;
		u \circ v = u \circ_T v, \;\;
		\forall u, v \in V.
	\end{align*}
	Hence, $\pi_{+}$ is a homomorphism of Poisson algebras between $(V, \{\,,\,\}_{T}, \circ_{T})$ and $(A, [\,,\,], \cdot)$.
\end{proof}

\begin{corollary}\label{coro:bah2e}
	Let $(A, [\,,\,], \cdot)$ be a Poisson algebra, $(V, \rho, \zeta)$ be a module of $(A, [\,,\,], \cdot)$, and $S: V \rightarrow A$ be a balanced $A$-module homomorphism associated to $(V, \rho, \zeta)$.
	Then $(V, \{\,,\,\}_{+}, \circ_{+}, \rho, \zeta)$ (resp. $(V, \{\,,\,\}_{-}, \circ_{-}, \rho, \zeta)$) is an $A$-module Poisson algebras, where $\{\,,\,\}_{+}$ and $\circ_{+}$ (resp. $\{\,,\,\}_{-}$ and $\circ_{-}$) are defined by (for all $u, v \in V$)
	\begin{align*}
		\{u, v\}_{+} = - 2\rho(S(u))v \quad \text{(resp. $\{u, v\}_{-} = 2\rho(S(u))v$)}, \\
		u \circ_{+} v = - 2 \zeta(S(u))v \quad \text{(resp. $u \circ_{-} v = 2 \zeta(S(u))v$)}.
	\end{align*}
	Furthermore, the following conditions are equivalent.
	\begin{enumerate}[label=(\roman*)]
		\item 
			$T: V \rightarrow A$ is an extended $\mathcal{O}$-operator of weight $0$ with extension $S$ of mass $(-1, 0)$ on $(A, [\,,\,], \cdot)$ associated to $(V, \rho, \zeta)$.
		
		\item 
			$T+S$ (resp. $T-S$) is a homomorphism of Poisson algebras between $(V, \{\,,\,\}_{T}, \circ_{T})$ and $(A, [\,,\,], \cdot)$,
			where $\{\,,\,\}_T$ and $\circ_T$ are defined by 
			\begin{align*}
				\{u, v\}_T & := \rho(T(u))v - \rho(T(v))u ,   \\
				u \circ_T v & := \zeta(T(u))v + \zeta(T(v))u. 
			\end{align*}
			
		\item 
			$T+S$ (resp. $T-S$) is an $\mathcal{O}$-operator of weight 1 on $(A, [\,,\,], \cdot)$ associated to $(V$, $\{\,,\,\}_{+}$, $\circ_{+}$, $\rho$, $\zeta)$ (resp. $(V$, $\{\,,\,\}_{-}$, $\circ_{-}$, $\rho$, $\zeta)$).
	\end{enumerate}
\end{corollary}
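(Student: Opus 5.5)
The plan is to obtain Corollary~\ref{coro:bah2e} as the special case $\lambda = 0$ of Proposition~\ref{prop:bae2amp} and Theorem~\ref{thm:opm}. We regard the module $(V, \rho, \zeta)$ as an $A$-module Poisson algebra with trivial structure $[\,,\,]_V = 0$ and $\cdot_V = 0$, as in the convention stated after the semi-direct product proposition. We take $\pi_{\pm} := T \pm S$, so that $T$ and $S$ are precisely the symmetrizer and antisymmetrizer of $\pi_{\pm}$.

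First we check the hypotheses. By assumption, $S$ is a balanced $A$-module homomorphism. Equation~\eqref{eq:eqvm} with $\mu = \lambda = 0$ holds trivially, since both sides carry the factor $\mu = 0$. (It also holds for any $\mu$ here: both sides vanish, because $[\,,\,]_V$ and $\cdot_V$ are zero and $S(0) = 0$.)

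Proposition~\ref{prop:bae2amp} with $\lambda = 0$ then says that $(V, \{\,,\,\}_{\pm}, \circ_{\pm}, \rho, \zeta)$ is an $A$-module Poisson algebra. The operations there specialize to $\{u,v\}_{\pm} = \mp 2\rho(S(u))v$ and $u\circ_{\pm} v = \mp 2\zeta(S(u))v$, since the $\lambda[u,v]_V$ and $\lambda u\cdot_V v$ terms drop out. This is the first assertion.

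For the equivalences, apply Theorem~\ref{thm:opm} with $\lambda = 0$. The mass $(-1, \lambda)$ and the mass $(-1, -\lambda)$ both become $(-1, 0)$, so item (i) of the Corollary serves both the ``$+$'' and the ``$-$'' cases. The operations $\{\,,\,\}_T$ and $\circ_T$ of Eqs.~\eqref{eq:soo2p1}--\eqref{eq:soo2p2} reduce to the displayed formulas. The weight-$1$ $\mathcal{O}$-operator condition relative to $(V,\{\,,\,\}_{\pm},\circ_{\pm},\rho,\zeta)$ is exactly item (iii).

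Item (ii) of the Corollary omits the clause ``$(V,\{\,,\,\}_T,\circ_T)$ is a Poisson algebra'' that appears in Theorem~\ref{thm:opm}\ref{it:oeohh}. So the one point needing care is to show that item (ii) alone already implies item (i). This follows directly from Proposition~\ref{prop:iphr}. Its only hypothesis is that $S$ is an $A$-module homomorphism, and it states that $T$ is extended of mass $(-1, 0)$ if and only if $\pi_{\pm}$ preserves the two operations $\{\,,\,\}_T$ and $\circ_T$, without requiring that $(V,\{\,,\,\}_T,\circ_T)$ be a Poisson algebra.

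For completeness, when (i) holds, Theorem~\ref{thm:eo2ps} shows that $(V,\{\,,\,\}_T,\circ_T)$ is indeed a Poisson algebra, so ``homomorphism of Poisson algebras'' is meaningful. The remaining equivalence (i)$\Leftrightarrow$(iii) is then Theorem~\ref{thm:opm} verbatim.
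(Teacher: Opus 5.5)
Your proposal is correct and follows the paper's proof: regard $(V,\rho,\zeta)$ as an $A$-module Poisson algebra with trivial bracket and product, then specialize Proposition~\ref{prop:bae2amp} and Theorem~\ref{thm:opm} to $\lambda=0$. Your extra step for item (ii), which shows via Proposition~\ref{prop:iphr} and Theorem~\ref{thm:eo2ps} that the ``is a Poisson algebra'' clause is not needed as a separate hypothesis, is a sound refinement that the paper leaves implicit.
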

\begin{proof}
	Regarding $(V, \rho, \zeta)$ as an $A$-module Poisson algebra with trivial Poisson algebra structure, the conclusions follow from Proposition~\ref{prop:bae2amp} and Theorem~\ref{thm:opm}.
\end{proof}

\begin{corollary}\label{coro:eoqadj}
	Let $(A, [\,,\,], \cdot)$ be a Poisson algebra and $S: A \rightarrow A$ be a balanced $A$-module homomorphism associated to $(A, \ad_{[,]}, L_\cdot)$.
	Then $(A, \{\,,\,\}_{+}, \circ_{+}, \ad_{[,]}, L_\cdot)$ (resp. $(A, \{\,,\,\}_{-}, \circ_{-}, \ad_{[,]}, L_\cdot)$) is an $A$-module Poisson algebra where
	\begin{align*}
		\{a, b\}_{+} & = \lambda [a, b] - 2[S(a), b] \quad \text{(resp. $\{a, b\}_{-} = \lambda [a, b] + 2[S(a), b]$)},        \\
		a \circ_{+} b & = \lambda a \cdot b - 2 S(a) \cdot b \quad \text{(resp. $a \circ_{-} b = \lambda a \cdot b + 2 S(a) \cdot b$)}.
	\end{align*}
	Furthermore, the following conditions are equivalent.
	\begin{enumerate}[label=(\roman*)]
		\item 
			$T$ is an extended $\mathcal{O}$-operator of weight $\lambda$ with extension $S$ of mass $(-1, \lambda)$ (resp. $(-1, -\lambda)$) on $(A, [\,,\,], \cdot)$ associated to $(A, [\,,\,], \cdot, \ad_{[,]}, L_\cdot)$.
		
		\item 
			$T+S$ (resp. $T-S$) is a homomorphism of Poisson algebras between $(A, \{\,,\,\}_{T}, \circ_{T})$ and $(A, [\,,\,], \cdot)$,
			where $\{\,,\,\}_T$ and $\circ_T$ are defined by (for all $a, b \in A$)
			\begin{align*}
				\{a, b\}_T & := [T(a), b] + [a, T(b)] + \lambda [a, b] ,   \\
				a \circ_T b & := T(a) \cdot b + a \cdot T(b) + \lambda a \cdot b. 
			\end{align*}
			
		\item 
			$T + S$ (resp. $T - S$) is an $\mathcal{O}$-operator of weight 1 on $(A, [\,,\,], \cdot)$ associated to $(A$, $\{\,,\,\}_{+}$, $\circ_{+}$, $\ad_{[,]}$, $L_{\cdot})$ (resp. $(A$, $\{\,,\,\}_{-}$, $\circ_{-}$, $\ad_{[,]}$, $L_{\cdot})$).
	\end{enumerate}
\end{corollary}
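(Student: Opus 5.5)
The plan is to read Corollary~\ref{coro:eoqadj} as the special case $(V,[\,,\,]_V,\cdot_V,\rho,\zeta)=(A,[\,,\,],\cdot,\ad_{[,]},L_\cdot)$ of Proposition~\ref{prop:bae2amp} and Theorem~\ref{thm:opm}. The only hypothesis not given directly is that $S$ satisfies Eq.~\eqref{eq:eqvm} with $\mu=\lambda$ for the adjoint module. So the first step is to verify that a balanced $A$-module homomorphism $S$ on $(A,\ad_{[,]},L_\cdot)$ is automatically equivalent of any mass.

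Take $a,b,c\in A$. Since $S$ is an $A$-module homomorphism, $[S(a),b]=S([a,b])$ and $S(a)\cdot b=S(a\cdot b)$. For the Lie part of \eqref{eq:eqvm} we must show $[S([a,b]),c]=[[S(a),b],c]$. The right-hand side equals $[S([a,b]),c]$ directly by the homomorphism property, so this holds. For the commutative part we need $S(a\cdot b)\cdot c=(S(a)\cdot b)\cdot c$, which again follows from $S(a)\cdot b=S(a\cdot b)$. Both identities hold for every $\mu$, in particular $\mu=\lambda$. This matches the remark in the proof of the preceding corollary that such $S$ is equivalent of mass $\kappa$ or $0$. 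Here only the module-homomorphism property is needed; balancedness is not.

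With this in hand, Proposition~\ref{prop:bae2amp} shows that $(A,\{\,,\,\}_{\pm},\circ_{\pm},\ad_{[,]},L_\cdot)$ are $A$-module Poisson algebras. Substituting $\rho=\ad$ and $\zeta=L_\cdot$ turns $\lambda[u,v]_V\mp2\rho(S(u))v$ into $\lambda[a,b]\mp2[S(a),b]$, and similarly for $\circ_\pm$. Next, Theorem~\ref{thm:opm} gives the equivalence of (i), (ii) and (iii). One small rewriting is needed: $\{a,b\}_T=\ad(T(a))b-\ad(T(b))a+\lambda[a,b]=[T(a),b]+[a,T(b)]+\lambda[a,b]$, and likewise $a\circ_T b=T(a)\cdot b+a\cdot T(b)+\lambda a\cdot b$ by commutativity. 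Finally, $\pi_\pm=T\pm S$ are recovered from the symmetrizer and antisymmetrizer.

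I expect no real obstacle. The only point requiring care is confirming that the equivalence condition \eqref{eq:eqvm} holds in the adjoint case, which is the one-line computation above.
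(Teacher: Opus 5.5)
Your proposal is correct and is the paper's own argument: the paper likewise notes that in the adjoint case $S$ is equivalent of every mass $\mu$ (in particular $\mu=\lambda$), and then specializes Proposition~\ref{prop:bae2amp} and Theorem~\ref{thm:opm}. One small detail in your check of Eq.~\eqref{eq:eqvm}: the module-homomorphism property directly gives $S([a,b])=[a,S(b)]$ and $S(a\cdot b)=a\cdot S(b)$, so obtaining $[S(a),b]=S([a,b])$ and $S(a)\cdot b=S(a\cdot b)$ takes one further use of skew-symmetry and of commutativity, respectively.
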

\begin{proof}
	Note that $S: A \rightarrow A$ is equivalent of mass $\mu$ associated to $(A, [\,,\,], \cdot, \ad_{[,]}, L_\cdot)$ for all $\mu \in \mathbf{k}$.
	The conclusions follow from Proposition~\ref{prop:bae2amp} and Theorem~\ref{thm:opm}.
\end{proof}

\begin{corollary}
	Let $(A, [\,,\,], \cdot)$ be a Poisson algebra.
	Then $T: A \rightarrow A$ satisfies the following equalities for all $a, b \in A$
	\begin{align}
		[T(a), T(b)] - T([T(a), b] + [a, T(b)] + \lambda [a, b]) = (\pm\lambda - 1) [a, b] , \label{eq:bpal} \\
		T(a) \cdot T(b) - T(T(a) \cdot b + a \cdot T(b) + \lambda a \cdot b) = (\pm \lambda - 1) a \cdot b. \label{eq:bpaa}
	\end{align}
	if and only if $T \pm \id$ is a Rota-Baxter operator of weight $\lambda \mp 2$ on $(A, [\,,\,], \cdot)$.
\end{corollary}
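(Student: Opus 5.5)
The plan is to specialize Corollary~\ref{coro:eoqadj} to the extension $S=\id$. After that, the only remaining work is to identify an $\mathcal{O}$-operator of weight $1$ associated to a rescaled adjoint module Poisson algebra with a Rota-Baxter operator of a shifted weight.

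First I will check the hypothesis of Corollary~\ref{coro:eoqadj}: $S=\id:A\to A$ must be a balanced $A$-module homomorphism associated to $(A,\ad_{[,]},L_\cdot)$.
\begin{itemize}
\item Balancedness, Eq.~\eqref{eq:bal}, reads $[a,b]=-[b,a]$ and $a\cdot b=b\cdot a$. These hold by skew-symmetry of the bracket and commutativity of the product.
\item $A$-invariance of any nonzero mass is trivial, since $\id$ is a module map from the adjoint module to itself.
\end{itemize}
This was already noted in the example following the definition of extended $\mathcal{O}$-operators.

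Next I read off the defining equations with $S=\id$. The left-hand sides of Eqs.~\eqref{eq:bpal} and \eqref{eq:bpaa} are exactly the left-hand sides of Eqs.~\eqref{eq:ext-o-lie} and \eqref{eq:ext-o-assoc} for the adjoint module. With $S=\id$ and mass $(\kappa,\mu)=(-1,\pm\lambda)$, the right-hand sides become
\begin{equation*}
-[a,b]\pm\lambda[a,b]=(\pm\lambda-1)[a,b]\quad\text{and}\quad(\pm\lambda-1)\,a\cdot b .
\end{equation*}
So Eqs.~\eqref{eq:bpal}--\eqref{eq:bpaa}, with upper (resp. lower) sign, say precisely that $T$ is an extended $\mathcal{O}$-operator of weight $\lambda$ with extension $\id$ of mass $(-1,\lambda)$ (resp. $(-1,-\lambda)$). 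By the equivalence (i)$\Leftrightarrow$(iii) of Corollary~\ref{coro:eoqadj}, this holds if and only if $T+\id$ (resp. $T-\id$) is an $\mathcal{O}$-operator of weight $1$ associated to $(A,\{\,,\,\}_{\pm},\circ_{\pm},\ad_{[,]},L_\cdot)$.

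With $S=\id$ the new operations are
\begin{align*}
\{a,b\}_{+}&=(\lambda-2)[a,b], & a\circ_{+}b&=(\lambda-2)\,a\cdot b,\\
\{a,b\}_{-}&=(\lambda+2)[a,b], & a\circ_{-}b&=(\lambda+2)\,a\cdot b .
\end{align*}
The final step is to observe from Definition~\ref{def:oo} that an $\mathcal{O}$-operator of weight $1$ associated to $(A,c[\,,\,],c\,\cdot,\ad_{[,]},L_\cdot)$ is the same thing as an $\mathcal{O}$-operator of weight $c$ associated to the adjoint module Poisson algebra. This is exactly the remark after Definition~\ref{def:oo}, and it also holds for $c=0$, since the defining identities coincide verbatim. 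By definition, such an operator is a Rota-Baxter operator of weight $c$. Taking $c=\lambda\mp2$ gives the claim.

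There is no genuine obstacle. The only point requiring care is the sign bookkeeping: the upper sign corresponds to $\pi_+=T+\id$, mass $(-1,\lambda)$ and weight $\lambda-2$, while the lower sign corresponds to $\pi_-=T-\id$, mass $(-1,-\lambda)$ and weight $\lambda+2$. As a sanity check, one can instead expand $[(T\pm\id)a,(T\pm\id)b]-(T\pm\id)\big([(T\pm\id)a,b]+[a,(T\pm\id)b]+(\lambda\mp2)[a,b]\big)$ directly. The cross terms $\pm[T(a),b]\pm[a,T(b)]$ cancel, and what remains is the difference of the two sides of Eq.~\eqref{eq:bpal}; the same computation applies to the product.
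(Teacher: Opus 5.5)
Your proposal is correct and follows the paper's proof essentially verbatim. Like the paper, you specialize Corollary~\ref{coro:eoqadj} to $S=\id$ with mass $(-1,\pm\lambda)$, observe that $\{\,,\,\}_{\pm}$ and $\circ_{\pm}$ become the $(\lambda\mp2)$-rescaled operations, and absorb that rescaling into the weight; your added remarks, that the last identification also holds when $\lambda\mp2=0$ and that the direct expansion alone already proves the statement, are correct refinements the paper leaves implicit.
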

\begin{proof}
	By Corollary~\ref{coro:eoqadj}, $T$ is an extended $\mathcal{O}$-operator of weight $\lambda$ with extension $S = \id$ of mass $(-1, \pm\lambda)$ on $(A, [\,,\,], \cdot)$ associated to $(A, [\,,\,], \cdot, \ad_{[,]}, L_\cdot)$ if and only if $T \pm \id$ is an $\mathcal{O}$-operator of weight 1 on $(A, [\,,\,], \cdot)$ associated to $(A, \{\,,\,\}_{\pm}, \circ_{\pm}, \ad_{[,]}, L_{\cdot})$, where
	\begin{equation*}
		\{a, b\}_{\pm} = (\lambda \mp 2) [a, b], \;\;
		a \circ_{\pm} b = (\lambda \mp 2) a \cdot b, \;\;
		\forall a, b \in A.
	\end{equation*}
	The latter is equivalent to the condition that $T \pm \id$ is an $\mathcal{O}$-operator of weight $\lambda \mp 2$ on $(A, [\,,\,], \cdot)$ associated to $(A, [\,,\,], \cdot, \ad_{[,]}, L_{\cdot})$, which completes the proof.
\end{proof}

\begin{remark}
	If $T$ satisfies Eqs.~\eqref{eq:bpal} and \eqref{eq:bpaa} for $\lambda = 0$, then $(A, [\,,\,], \cdot, T)$ is called a {\bf Baxter Poisson algebra}.
	A Baxter Poisson algebra naturally yields post-Poisson algebra structures.
	In fact, since $T \pm \id$ is a Rota-Baxter operator of weight $\mp 2$  on $(A, [\,,\,], \cdot)$, by Proposition~\ref{prop:o2pp}, $(A, \{\,,\,\}, \star, \circ, \succ)$ is a post-Poisson algebra, where $\{\,,\,\}, \star, \circ, \succ: A \otimes A \to A$ are defined for all $a, b \in A$ by 
	\begin{equation*}
		\{a, b\} = \mp 2 [a, b], \;
		a \star b = [(T \pm \id)(a), b], \;
		a \circ b = \mp 2 a \cdot b, \;
		a \succ b = (T \pm \id)(a) \cdot b.
	\end{equation*}
\end{remark}

\section{Tensor forms of extended \texorpdfstring{$\mathcal{O}$}{O}-operators and the extended Poisson Yang-Baxter equation}\label{sec:tensor}
In this section, we introduce the notion of the generalized Poisson Yang-Baxter equations and the extended Poisson Yang-Baxter equations, both of which generalize the Poisson Yang-Baxter equations. 
The latter is shown to be a special case of the former under the invariant condition. 
We then establish the relationship between extended $\mathcal{O}$-operators and the generalized Poisson Yang-Baxter equations, as well as the extended Poisson Yang-Baxter equations. 
In particular, the connection between the Poisson Yang-Baxter equations and extended $\mathcal{O}$-operators is also obtained.

\subsection{Extended \texorpdfstring{$\mathcal{O}$}{O}-operators and generalized Poisson Yang-Baxter equations}\label{sec:generalized}
In this subsection, we introduce the notion of generalized Poisson Yang-Baxter equations, which arise naturally in the study of Poisson bialgebras.
We then establish the relationship between extended $\mathcal{O}$-operators and the generalized Poisson Yang-Baxter equations.

\begin{definition}
	(\cite{ni2013poisson})
	A Poisson bialgebra $(A, [\ ,\ ], \cdot, \delta, \Delta)$ is called \textbf{coboundary} if there exists $r \in A \otimes A$ such that:
	\begin{align}
		\delta(a) & = (\id \otimes \ad_{[,]}(a) + \ad_{[,]}(a) \otimes \id )(r), \label{eq:pcbdl}         \\
		\Delta(a) & = (\id \otimes L_\cdot(a) - L_\cdot(a) \otimes \id)(r), \;\; \forall a \in A. \label{eq:pcbda}
	\end{align}
	A coboundary Poisson bialgebra is denoted by $(A, [\ ,\ ], \cdot, \delta_r, \Delta_r)$.
\end{definition}

\begin{theorem}\label{thm:cbd}
	{\rm (\cite[Theorem~2]{ni2013poisson})}
	Let $(A, [\ ,\ ], \cdot)$ be a Poisson algebra and $r \in A \otimes A$.
	Writing $r$ as $r = \Theta + \Lambda$ with $\Theta \in \mathrm{Sym}^2(A)$ and $\Lambda \in \mathrm{Alt}^2(A)$.
	Let $\delta: A \rightarrow A \otimes A$ and $\Delta: A \rightarrow A \otimes A$ be two linear maps defined by Eqs.~\eqref{eq:pcbdl} and \eqref{eq:pcbda} respectively.
	Then $(A^*, \delta^*, \Delta^*)$ is a Poisson algebra such that $(A, [\ ,\ ]_A, \cdot_A, \delta, \Delta)$ is a Poisson bialgebra if and only the following conditions are satisfied (for all $a \in A$):
	\begin{align}
		(\ad_{[,]}(a) \otimes \id + \id \otimes \ad_{[,]}(a))\Theta                                         & = 0, \label{eq:cpbi1}\\
		(L_{\cdot}(a) \otimes \id - \id \otimes L_{\cdot}(a))\Theta                                           & = 0, \label{eq:cpbi2}\\
		(\ad_{[,]}(a) \otimes \id \otimes \id + \id \otimes \ad_{[,]}(a) \otimes \id + \id \otimes \id \otimes \ad_{[,]}(a))\mathbf{C}(r)      & = 0, \label{eq:gepbe1}\\
		(L_{\cdot}(a) \otimes \id \otimes \id - \id \otimes \id \otimes L_{\cdot}(a))\mathbf{A}(r)                         & = 0, \label{eq:gepbe2}\\
		(\ad_{[,]}(a) \otimes \id \otimes \id)\mathbf{A}(r) - (\id \otimes L_{\cdot}(a) \otimes \id - \id \otimes \id \otimes L_{\cdot}(a))\mathbf{C}(r) & = 0, \label{eq:gepbe3}
	\end{align}
	where $\mathbf{C}(r) := [r_{12}, r_{13}] + [r_{13}, r_{23}] + [r_{12}, r_{23}]$ and $\mathbf{A}(r) := r_{12} \cdot r_{13} + r_{13} \cdot r_{23} - r_{23} \cdot r_{12}$.
\end{theorem}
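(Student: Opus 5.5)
The statement is Theorem~\ref{thm:cbd}, quoted from \cite[Theorem~2]{ni2013poisson}. I would follow the standard coboundary argument. The plan is to dualize and treat the two structures separately, then the compatibility conditions.

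\textbf{Step 1 (the Lie part).} Since $\delta$ is given by Eq.~\eqref{eq:pcbdl}, the pair $(A,[\,,\,],\delta)$ is exactly the coboundary Lie bialgebra setting of Drinfeld. The known Lie-bialgebra computation then applies.
\begin{itemize}
\item $\delta^*$ is skew-symmetric if and only if $(\ad(a)\otimes\id+\id\otimes\ad(a))(r+\tau(r))=0$. Because $r+\tau(r)=2\Theta$, this is Eq.~\eqref{eq:cpbi1}.
\item The Jacobi identity for $\delta^*$ amounts to the vanishing of $\sum_{\circlearrowleft}(\delta\otimes\id)\delta(a)$. A direct expansion rewrites this as $(\ad(a)\otimes\id\otimes\id+\cdots)\mathbf{C}(r)$ plus terms involving $\Theta$. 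The $\Theta$-terms vanish once Eq.~\eqref{eq:cpbi1} holds, which gives Eq.~\eqref{eq:gepbe1}.
\item The 1-cocycle condition on $\delta$ holds automatically, since $\delta$ is a coboundary.
\end{itemize}

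\textbf{Step 2 (the commutative associative part).} This is analogous, using the coboundary antisymmetric infinitesimal bialgebra theory of \cite{bai2012O} with $\Delta$ as in Eq.~\eqref{eq:pcbda}.
\begin{itemize}
\item Commutativity of $\Delta^*$ means $\tau\Delta=\Delta$. A short computation shows this is equivalent to $(L(a)\otimes\id-\id\otimes L(a))(r+\tau(r))=0$, which is Eq.~\eqref{eq:cpbi2}.
\item Associativity of $\Delta^*$ means $(\Delta\otimes\id)\Delta=(\id\otimes\Delta)\Delta$. Expanding both sides and using associativity and commutativity of $\cdot$ gives $(L(a)\otimes\id\otimes\id-\id\otimes\id\otimes L(a))\mathbf{A}(r)$, modulo $\Theta$-terms killed by Eq.~\eqref{eq:cpbi2}. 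This is Eq.~\eqref{eq:gepbe2}.
\item The compatibility making $(A,\cdot,\Delta)$ a commutative and cocommutative infinitesimal bialgebra holds automatically for coboundary $\Delta$.
\end{itemize}

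\textbf{Step 3 (the Leibniz rule for $(A^*,\delta^*,\Delta^*)$).} The Leibniz rule dualizes to the identity
\[
(\id\otimes\Delta)\delta(a)=(\delta\otimes\id)\Delta(a)+(\tau\otimes\id)(\id\otimes\delta)\Delta(a).
\]
I would substitute Eqs.~\eqref{eq:pcbdl} and \eqref{eq:pcbda} and collect terms. Using the Leibniz rule in $A$, the difference should reduce to the left side of Eq.~\eqref{eq:gepbe3}, plus $\Theta$-dependent terms that vanish by Eqs.~\eqref{eq:cpbi1} and \eqref{eq:cpbi2}.

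\textbf{Step 4 (the remaining bialgebra compatibilities).} The mixed compatibilities in the definition of Poisson bialgebra (\cite{ni2013poisson}) link $\delta$ with $\cdot$ and $\Delta$ with $[\,,\,]$. I would verify that these hold identically for coboundary $\delta,\Delta$, using only that $A$ is a Poisson algebra.

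\textbf{Main obstacle.} I expect Step 3 to be the hardest. Its terms mix $\ad$ and $L$ across three tensor slots, and one must carefully identify both $\mathbf{A}(r)$ acted on by $\ad(a)$ in the first slot and $\mathbf{C}(r)$ acted on by $L(a)$ in slots two and three. My first attempt would be to write $r=\sum_i a_i\otimes b_i$ and group the terms by which slot receives $a$. I would then isolate the symmetric-part contributions by substituting $r=\Theta+\Lambda$ and checking that they cancel using Eqs.~\eqref{eq:cpbi1}--\eqref{eq:cpbi2}. Since the statement is cited from \cite[Theorem~2]{ni2013poisson}, one may alternatively simply invoke that reference.
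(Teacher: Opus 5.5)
Your outline is correct and follows the standard coboundary verification behind \cite[Theorem~2]{ni2013poisson}. The paper gives no argument of its own and simply imports that result, so your route is the intended one: skew-symmetry and commutativity give Eqs.~\eqref{eq:cpbi1}--\eqref{eq:cpbi2}, Jacobi, associativity and Leibniz give Eqs.~\eqref{eq:gepbe1}--\eqref{eq:gepbe3}, and the cocycle and mixed compatibilities hold automatically. Two refinements: associativity of $\Delta^*$ gives Eq.~\eqref{eq:gepbe2} with no $\Theta$-terms at all; and in Step 3, writing $r=\sum_j a_j\otimes b_j$, the difference $(\id\otimes\Delta)\delta(a)-(\delta\otimes\id)\Delta(a)-(\tau\otimes\id)(\id\otimes\delta)\Delta(a)$ equals the left side of Eq.~\eqref{eq:gepbe3} plus $\sum_j\big[(L_\cdot(a)\ad(a_j)\otimes\id-\ad(a_j)\otimes L_\cdot(a)-L_\cdot([a,a_j])\otimes\id)(r+\tau(r))\big]\otimes b_j$, which vanishes by Eqs.~\eqref{eq:cpbi1}--\eqref{eq:cpbi2} and the identity $L_\cdot(a)\ad(c)-\ad(c)L_\cdot(a)=L_\cdot([a,c])$, as you anticipated.
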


\begin{remark}\label{rmk:gped}
	As shown in \cite{ni2013poisson}, $(A^*, \delta^*, \Delta^*)$ is a Poisson algebra if and only if Eqs.~\eqref{eq:gepbe1}-\eqref{eq:gepbe3} hold, provided that Eqs.~\eqref{eq:cpbi1}-\eqref{eq:cpbi2} are satisfied. 
\end{remark}

Skew-symmetric solutions of the Poisson Yang-Baxter equation were studied in \cite{ni2013poisson}, and solutions with invariant symmetric part were investigated in \cite{lin2026quasitriangular}; both are special cases of Eqs.~\eqref{eq:cpbi1}--\eqref{eq:cpbi2}. In the present paper, we retain the invariant condition and study a generalized version of the Poisson Yang-Baxter equation.

\begin{definition}
	(\cite{ni2013poisson})
	Let $(A, [\ ,\ ], \cdot)$ be a Poisson algebra and $r \in A \otimes A$.
	Then $r$ is called a solution of the \textbf{Poisson Yang-Baxter equation (PYBE)} in $(A, [\ ,\ ], \cdot)$ if 
	$\mathbf{C}(r) = \mathbf{A}(r) = 0$.
\end{definition}

\begin{definition}
	Let $(A, [\,,\,], \cdot)$ be a Poisson algebra.
	An element $r \in A \otimes A$ is called a solution of the \textbf{generalized Poisson Yang-Baxter equation (GPYBE)} in $(A, [\,,\,], \cdot)$ if for all $a \in A$, Eqs.~\eqref{eq:gepbe1}-\eqref{eq:gepbe3} hold.
\end{definition}

\begin{definition}[\cite{lin2026quasitriangular}]
	Let $(A, [\ ,\ ], \cdot)$ be a Poisson algebra and $r \in A \otimes A$.
	Then $r$ is called \textbf{$(\ad, L)$-invariant} if for all $a \in A$, the following equalities hold
	\begin{align}
		(\ad_{[,]}(a) \otimes \id + \id \otimes \ad_{[,]}(a))(r) &= 0, \label{eq:adiv} \\
		(L_\cdot(a) \otimes \id - \id \otimes L_\cdot(a))(r) &= 0. \label{eq:liv}
	\end{align}
\end{definition}

Clearly, solutions of the GPYBE with $(\ad, L)$-invariant symmetric part give rise to coboundary Poisson bialgebras. 
\begin{corollary}
	Let $(A, [\ ,\ ], \cdot)$ be a Poisson algebra and $r \in A \otimes A$.
	 Writing $r$ as $r = \Theta + \Lambda$ with $\Theta \in \mathrm{Sym}^2(A)$ and $\Lambda \in \mathrm{Alt}^2(A)$.
	Suppose that $\Theta$ is $(\ad, L)$-invariant,
	then $r$ is a solution of the GPYBE if and only if $\tau(r)$ is a solution of the GPYBE in $(A, [\ ,\ ], \cdot)$.
\end{corollary}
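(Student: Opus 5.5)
The plan is to avoid manipulating $\mathbf{C}(r)$ and $\mathbf{A}(r)$ directly. Instead, I would use Theorem~\ref{thm:cbd} together with Remark~\ref{rmk:gped}, which turn the GPYBE into the statement that a certain dual structure is a Poisson algebra. Write $r=\Theta+\Lambda$. Since $\tau(\Theta)=\Theta$ and $\tau(\Lambda)=-\Lambda$, we get $\tau(r)=\Theta-\Lambda$. So $\tau(r)$ has the same symmetric part $\Theta$ and antisymmetric part $-\Lambda$.

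First, the invariance of $\Theta$ is exactly Eqs.~\eqref{eq:cpbi1}--\eqref{eq:cpbi2}, and this hypothesis is the same for $r$ and for $\tau(r)$. By Remark~\ref{rmk:gped}, $r$ (resp. $\tau(r)$) solves the GPYBE if and only if $(A^*,\delta_r^*,\Delta_r^*)$ (resp. $(A^*,\delta_{\tau(r)}^*,\Delta_{\tau(r)}^*)$) is a Poisson algebra. Here $\delta_r,\Delta_r$ are defined by Eqs.~\eqref{eq:pcbdl}--\eqref{eq:pcbda}.

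Second, I compare the two cobrackets. By linearity,
\begin{equation*}
\delta_{\tau(r)}(a)=(\id\otimes\ad_{[,]}(a)+\ad_{[,]}(a)\otimes\id)(\Theta)-(\id\otimes\ad_{[,]}(a)+\ad_{[,]}(a)\otimes\id)(\Lambda).
\end{equation*}
The first term vanishes by Eq.~\eqref{eq:cpbi1}, and the same term also drops out of $\delta_r(a)$. Hence $\delta_{\tau(r)}=-\delta_r$. In the same way Eq.~\eqref{eq:cpbi2} gives $\Delta_{\tau(r)}=-\Delta_r$.

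Third, the dual structure for $\tau(r)$ is $(A^*,-\delta_r^*,-\Delta_r^*)$. Replacing a bracket and a product simultaneously by their negatives preserves every Poisson axiom. Skew-symmetry and commutativity are linear in the operations. The Jacobi identity, associativity and the Leibniz rule are each homogeneous of degree two in the operations, so every term picks up the same factor $(-1)^2=1$. Therefore $(A^*,\delta_r^*,\Delta_r^*)$ is Poisson if and only if $(A^*,-\delta_r^*,-\Delta_r^*)$ is, and the equivalence follows.

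The only delicate point is the reliance on Remark~\ref{rmk:gped}: that the GPYBE is equivalent to the Poisson property of the dual structure once Eqs.~\eqref{eq:cpbi1}--\eqref{eq:cpbi2} hold. If a fully direct check is preferred, there is an alternative. One can verify the identities $\mathbf{C}(\tau(r))=-\sigma_{13}\mathbf{C}(r)$ and $\mathbf{A}(\tau(r))=\sigma_{13}\mathbf{A}(r)$, where $\sigma_{13}$ swaps the first and third tensor factors. Eqs.~\eqref{eq:gepbe1}--\eqref{eq:gepbe2} then transform into themselves. Eq.~\eqref{eq:gepbe3}, however, transforms into a version with the operators acting on permuted tensor positions. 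Recovering Eq.~\eqref{eq:gepbe3} from that version would require the invariance of $\Theta$ and is the main obstacle, which is why I prefer the bialgebra route above.
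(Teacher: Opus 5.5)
Your proposal is correct and follows the paper's proof. Both use Remark~\ref{rmk:gped} to turn the GPYBE into the Poisson property of $(A^*,\delta_r^*,\Delta_r^*)$, both observe that $\delta_{\tau(r)}=-\delta_r$ and $\Delta_{\tau(r)}=-\Delta_r$, and both note that negating the bracket and the product together preserves the Poisson axioms; where the paper only asserts the sign flip, you justify it explicitly from the invariance of $\Theta$.
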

\begin{proof}
	Define linear maps $\delta_r$ and $\Delta_r$ by Eqs.~\eqref{eq:pcbdl} and \eqref{eq:pcbda} through $r$, respectively.
	Note that $\delta_{\tau(r)} = -\delta_{r}$ and $\Delta_{\tau(r)} = - \Delta_r$, and that $(A^*,- \delta_{r}^*, -\Delta_{r}^*)$ is a Poisson algebra if and only if $(A^*, \delta_{r}^*, \Delta_{r}^*)$ is a Poisson algebra.
	Remark~\ref{rmk:gped} then implies that $\tau(r)$ is a solution of the GPYBE if and only if $r$ is a solution of the GPYBE.
\end{proof}

Let $A$ be a vector space.
Any $r \in A \otimes A$ can be identified as maps from $A^*$ to $A$, which we denote by $r_{+}: A^* \rightarrow A$ and $r_{-}: A^* \rightarrow A$, respectively and explicitly,
\begin{equation*}
	\langle r_{+}(x^*), y^*\rangle = - \langle x^*, r_{-}(y^*)\rangle = \langle r, x^* \otimes y^*\rangle, \;\; \forall x^*, y^* \in A^*.
\end{equation*}
Clearly, $-r_{-} = r_{+}^* = (\tau(r))_{+}$.
Note that $r$ is called the \textbf{2-tensor form} of a linear map $\varphi: A^* \rightarrow A$ if $r_{+} = \varphi$.
Moreover, the bracket and multiplication on $A^*$ defined by Eqs.~\eqref{eq:pcbdl}-\eqref{eq:pcbda} (as the duals) are respectively given by
\begin{align}
	[x^*, y^*]_r  & = -\ad_{[,]}^*(r_{+}(x^*)) y^* +\ad_{[,]}^*(r_{-}(y^*)) x^* , \label{eq:pcbdr1}             \\
	x^* \cdot_r y^* & = L^*_{\cdot} (r_{+}(x^*)) y^* + L^*_{\cdot} (r_{-}(y^*)) x^*, \;\; \forall x^*,y^* \in A^*. \label{eq:pcbdr2}
\end{align}
Writing $r$ as $r = \Theta + \Lambda$ with $\Theta \in \mathrm{Sym}^2(A)$ and $\Lambda \in \mathrm{Alt}^2(A)$, i.e., 
\begin{equation*}
	\Theta = \frac{r + \tau(r)}{2}, \;\; \Lambda = \frac{r - \tau(r)}{2}
\end{equation*}
Immediately, $\tau(r) = \Theta - \Lambda$, and
\begin{equation*}
	\Theta_{+} = - \Theta_{-} = \frac{r_{+} - r_{-}}{2}, \;\; \Lambda_{+} = \Lambda_{-} = \frac{r_{+} + r_{-}}{2},
\end{equation*}
that is, $\Theta_{+}$ and $\Lambda_{+}$ are antisymmetrizer and symmetrizer of $r_{\pm}$ respectively.

\begin{lemma}\label{lem:sinvb}
	Let $(A, [\,,\,], \cdot)$ be a Poisson algebra and $r \in A \otimes A$.
	Writing $r$ as $r = \Theta + \Lambda$ with $\Theta \in \mathrm{Sym}^2(A)$ and $\Lambda \in \mathrm{Alt}^2(A)$.
	Then the following conditions are equivalent.
	\begin{enumerate}[label=(\roman*)]
		\item
		$\Theta$ is $(\ad, L)$-invariant.
		
		\item
		$\Theta_{+}: A^* \rightarrow A$ is $A$-invariant of mass 1 associated to $(A^*, -\ad_{[,]}^*, L_\cdot^*)$.
		
		\item
		$\Theta_{+}: A^* \rightarrow A$ is balanced associated to $(A^*, -\ad_{[,]}^*, L_\cdot^*)$.
	\end{enumerate}
\end{lemma}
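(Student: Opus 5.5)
The plan is to show that each of (i), (ii), (iii) is equivalent to the same pair of operator identities:
\begin{equation*}
[a, \Theta_{+}(x^*)] = \Theta_{+}\big(-\ad_{[,]}^*(a)x^*\big), \qquad a \cdot \Theta_{+}(x^*) = \Theta_{+}\big(L_\cdot^*(a)x^*\big), \qquad \forall a \in A,\; x^* \in A^*.
\end{equation*}
These identities are exactly Eq.~\eqref{eq:ainv} with $\kappa = 1$, $S = \Theta_{+}$ and $(\rho, \zeta) = (-\ad_{[,]}^*, L_\cdot^*)$. So (ii) is this pair by definition, and only (i) and (iii) need to be matched to it.

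The one structural fact used throughout is that $\Theta$ is symmetric. Hence $\langle \Theta_{+}(x^*), y^*\rangle = \langle \Theta, x^*\otimes y^*\rangle = \langle x^*, \Theta_{+}(y^*)\rangle$, so $\Theta_{+}^* = \Theta_{+}$. All identities are tested by pairing with arbitrary $x^*\otimes y^*$ or with arbitrary $a \in A$, using nondegeneracy of the pairing in finite dimensions.

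For (i)$\Leftrightarrow$(ii), pair Eq.~\eqref{eq:adiv} for $\Theta$ with $x^*\otimes y^*$ and move the operators to the dual side. This gives
\begin{equation*}
-\langle \Theta_{+}(\ad_{[,]}^*(a)x^*), y^*\rangle - \langle \Theta, x^*\otimes \ad_{[,]}^*(a)y^*\rangle .
\end{equation*}
The second term equals $\langle \Theta_{+}(x^*), \ad_{[,]}^*(a)y^*\rangle = \langle [a,\Theta_{+}(x^*)], y^*\rangle$ up to the sign convention $\langle \ad_{[,]}^*(a)y^*, b\rangle = \langle y^*, [a,b]\rangle$. Vanishing for all $y^*$ is therefore exactly the first identity. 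In the same way, pairing Eq.~\eqref{eq:liv} with $x^*\otimes y^*$ gives $\langle \Theta_{+}(L_\cdot^*(a)x^*) - a\cdot\Theta_{+}(x^*), y^*\rangle$, which is the second identity.

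For (iii)$\Leftrightarrow$(ii), write out the balanced condition Eq.~\eqref{eq:bal} with $\rho = -\ad_{[,]}^*$ and $\zeta = L_\cdot^*$, and pair both sides with an arbitrary $a \in A$. The Lie part becomes
\begin{equation*}
\langle y^*, [a, \Theta_{+}(x^*)]\rangle + \langle x^*, [a, \Theta_{+}(y^*)]\rangle = 0.
\end{equation*}
By the self-duality of $\Theta_{+}$, the second term is $\langle \ad_{[,]}^*(a)x^*, \Theta_{+}(y^*)\rangle = \langle \Theta_{+}(\ad_{[,]}^*(a)x^*), y^*\rangle$. So the condition is equivalent to $[a,\Theta_{+}(x^*)] + \Theta_{+}(\ad_{[,]}^*(a)x^*) = 0$, the first identity. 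The commutative part is $\langle y^*, \Theta_{+}(x^*)\cdot a\rangle = \langle x^*, \Theta_{+}(y^*)\cdot a\rangle$. Its right side equals $\langle \Theta_{+}(L_\cdot^*(a)x^*), y^*\rangle$ by the same trick, so it reduces to the second identity.

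The computations are routine. The only real obstacle is sign bookkeeping: the coadjoint module carries $-\ad_{[,]}^*$, and $\Theta_{\pm}$ differ by a sign. I would fix the conventions $\langle \ad_{[,]}^*(a)x^*, b\rangle = \langle x^*, [a,b]\rangle$ and $\langle L_\cdot^*(a)x^*, b\rangle = \langle x^*, a\cdot b\rangle$ once at the start. Symmetry of $\Theta$ (not the antisymmetric part) is what makes the balanced and invariant conditions coincide.
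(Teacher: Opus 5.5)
Your proof is correct. The paper gives no argument for this lemma; its proof is the single line citing \cite[Lemma~2.14]{lin2026quasitriangular}. Your pairing computation is therefore a self-contained replacement rather than a reconstruction of an in-text proof.

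Your route makes clear where each hypothesis enters. The equivalence (i)$\Leftrightarrow$(ii) holds for an arbitrary $r$ with $r_{+}$ in place of $\Theta_{+}$, since no symmetry is used there. By contrast, (ii)$\Leftrightarrow$(iii) uses exactly the self-duality $\langle\Theta_{+}(x^*),y^*\rangle=\langle x^*,\Theta_{+}(y^*)\rangle$ together with commutativity of $\cdot$. Your approach also lets a reader check that the conventions for $r_{+}$ and for the coadjoint module $(A^*,-\ad_{[,]}^*,L_\cdot^*)$ agree with the ones used here, which the bare citation leaves implicit.

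One small slip: use the convention you fix at the end, which is the paper's convention (vi). Then pairing $(\ad_{[,]}(a)\otimes\id+\id\otimes\ad_{[,]}(a))\Theta$ with $x^*\otimes y^*$ gives $+\langle\Theta_{+}(\ad_{[,]}^*(a)x^*),y^*\rangle+\langle\Theta,x^*\otimes\ad_{[,]}^*(a)y^*\rangle$, not two minus signs. Both terms carry the same sign and the expression is set to zero, so nothing in the argument changes.
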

\begin{proof}
	It follows from \cite[Lemma~2.14]{lin2026quasitriangular}.
\end{proof}

\begin{proposition}\label{prop:eocad2gp}
	Let $(A, [\,,\,], \cdot)$ be a Poisson algebra and $r \in A \otimes A$.
	Writing $r$ as $r = \Theta + \Lambda$ with $\Theta \in \mathrm{Sym}^2(A)$ and $\Lambda \in \mathrm{Alt}^2(A)$.
	Suppose that $\Theta$ is $(\ad, L)$-invariant.
	If $\Lambda_{+}$ is an extended $\mathcal{O}$-operator of weight $0$ with extension $\Theta_{+}$ of mass $(\kappa, 0)$ on $(A, [\,,\,], \cdot)$ associated to $(A^*, -\ad_{[,]}^*, L_\cdot^*)$, then $(A, \delta^*, \Delta^*)$ is a Poisson algebra on $A^*$ and $r$ is a solution of the GPYBE in $(A, [\,,\,], \cdot)$.
\end{proposition}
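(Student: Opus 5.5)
The plan is to reduce the statement to Theorem~\ref{thm:eo2ps}, identify the resulting Poisson structure with $(A^*,[\,,\,]_r,\cdot_r)$, and then invoke Remark~\ref{rmk:gped}.

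First I would translate the hypothesis on $\Theta$ into operator language via Lemma~\ref{lem:sinvb}. Since $\Theta$ is $(\ad,L)$-invariant, $\Theta_{+}:A^*\to A$ is balanced associated to $(A^*,-\ad_{[,]}^*,L_\cdot^*)$. It is also $A$-invariant of mass $1$. Invariance of mass $1$ says exactly that $\Theta_{+}$ is an $A$-module homomorphism to $(A,\ad_{[,]},L_\cdot)$, so Eq.~\eqref{eq:ainv} holds for every mass, in particular for $\kappa$.

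Next, I regard the coadjoint module as an $A$-module Poisson algebra with trivial bracket and multiplication on $A^*$. Then both sides of Eq.~\eqref{eq:eqvm} vanish, so $\Theta_{+}$ is equivalent of mass $0$. All hypotheses of Theorem~\ref{thm:eo2ps} are met with $T=\Lambda_{+}$, $S=\Theta_{+}$, $\lambda=0$ and mass $(\kappa,0)$. Hence $(A^*,\{\,,\,\}_{\Lambda_+},\circ_{\Lambda_+})$ is a Poisson algebra, where
\begin{align*}
\{x^*,y^*\}_{\Lambda_+}&=-\ad_{[,]}^*(\Lambda_{+}(x^*))y^*+\ad_{[,]}^*(\Lambda_{+}(y^*))x^*,\\
x^*\circ_{\Lambda_+}y^*&=L_\cdot^*(\Lambda_{+}(x^*))y^*+L_\cdot^*(\Lambda_{+}(y^*))x^*.
\end{align*}

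The key step is to show these operations coincide with $[\,,\,]_r$ and $\cdot_r$ of Eqs.~\eqref{eq:pcbdr1}--\eqref{eq:pcbdr2}. Since $\Lambda_+$ and $\Theta_+$ are the symmetrizer and antisymmetrizer of $r_\pm$, we have $r_{+}=\Lambda_{+}+\Theta_{+}$ and $r_{-}=\Lambda_{+}-\Theta_{+}$. Substituting these gives
\begin{equation*}
[x^*,y^*]_r=\{x^*,y^*\}_{\Lambda_+}-\ad_{[,]}^*(\Theta_{+}(x^*))y^*-\ad_{[,]}^*(\Theta_{+}(y^*))x^*,
\end{equation*}
and the balanced condition for $\rho=-\ad_{[,]}^*$ kills the last two terms. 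Likewise,
\begin{equation*}
x^*\cdot_r y^*=x^*\circ_{\Lambda_+}y^*+L_\cdot^*(\Theta_{+}(x^*))y^*-L_\cdot^*(\Theta_{+}(y^*))x^*,
\end{equation*}
and the balanced condition for $\zeta=L_\cdot^*$ makes the correction vanish. This is essentially the computation already carried out in the proof of Proposition~\ref{prop:pm2p}, applied with $\pi_\pm=r_\pm$. Consequently $(A^*,\delta^*,\Delta^*)=(A^*,[\,,\,]_r,\cdot_r)$ is a Poisson algebra.

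Finally, the $(\ad,L)$-invariance of $\Theta$ is exactly Eqs.~\eqref{eq:cpbi1}--\eqref{eq:cpbi2}. Given these, Remark~\ref{rmk:gped} says that $(A^*,\delta^*,\Delta^*)$ is a Poisson algebra if and only if Eqs.~\eqref{eq:gepbe1}--\eqref{eq:gepbe3} hold. Therefore $r$ is a solution of the GPYBE.

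The main obstacle I expect is bookkeeping of signs. Specifically, I must check that the dual formulas \eqref{eq:pcbdr1}--\eqref{eq:pcbdr2} together with the conventions $-r_-=r_+^*$ really produce the cancellation above, rather than a doubling of the $\Theta_+$ terms. If a sign turns out differently, I would instead rewrite $[\,,\,]_r$ via $\pi_\pm=r_\pm$ and apply Proposition~\ref{prop:pm2p} directly, which is designed for exactly this situation.
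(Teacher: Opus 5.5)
Your proposal is correct and follows the paper's proof: Lemma~\ref{lem:sinvb} makes $\Theta_+$ a balanced $A$-module homomorphism. Balancedness rewrites $[\,,\,]_r$ and $\cdot_r$ as $\{\,,\,\}_{\Lambda_+}$ and $\circ_{\Lambda_+}$, and you then apply Theorem~\ref{thm:eo2ps} and conclude via Remark~\ref{rmk:gped}. Your signs check out (the $\Theta_+$ terms cancel rather than double), and you add two details the paper leaves implicit: mass-$1$ invariance gives mass-$\kappa$ invariance, and Eq.~\eqref{eq:eqvm} holds trivially for $\mu=0$.
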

\begin{proof}
	By Lemma~\ref{lem:sinvb}, $\Theta_{+}$ is a balanced $A$-homomorphism associated to $(A^*, -\ad_{[,]}^*, L_\cdot^*)$.
	Hence, for all $x^*, y^* \in A^*$,
	\begin{align*}
		[x^*, y^*]_r & = -\ad_{[,]}^*(r_{+}(x^*)) y^* +\ad_{[,]}^*(r_{-}(y^*)) x^* = -\ad_{[,]}^*(\Lambda_{+}(x^*)) y^* +\ad_{[,]}^*(\Lambda_{+}(y^*)) x^*, \\
		x^* \cdot_r y^* & = L_{\cdot}^*(r_{+}(x^*)) y^* + L_{\cdot}^*(r_{-}(y^*)) x^* = L_{\cdot}^*(\Lambda_{+}(x^*)) y^* + L_{\cdot}^*(\Lambda_{+}(y^*)) x^*. 
	\end{align*}
	Theorem~\ref{thm:eo2ps} then implies that $(A^*, \delta^* = [\,,\,]_r, \Delta^* = \cdot_r)$ is a Poisson algebra.
	Moreover, by Remark~\ref{rmk:gped}, $r$ is a solution of the GPYBE in $(A, [\,,\,], \cdot)$.
\end{proof}

Let $T: V \rightarrow A$ be a linear map.
Through the identification $\Hom(V, A) \cong A \otimes V^* \subset (A \oplus V^*) \otimes (A \oplus V^*)$, we may regard $T$ as an element $r^{T} \in (A \oplus V^*) \otimes (A \oplus V^*)$ of the tensor product.
Concretely, for a basis $\{e_1, \cdots, e_n\}$ of $V$ with dual basis $\{e^1, \cdots, e^n\}$, we have $r^T = \sum_{i} T(e_i) \otimes e^i$.
The associated maps $r^{T}_{+}, r^{T}_{-}: A^* \oplus V \rightarrow A \oplus V^*$ are then explicitly given by
\begin{equation*}
	r^T_{+}(x^* + u) = T^*(x^*), \;\;
	r^T_{-}(x^* + u) = - T(u), \;\;
	\forall x^* \in A^*, \; u \in V.
\end{equation*}

\begin{theorem}\label{thm:gl2gp}
	Let $(A, [\,,\,], \cdot)$ be a Poisson algebra, $(V, \rho, \zeta)$ be a module of $(A, [\,,\,], \cdot)$, and $(\mathfrak{A} := A \ltimes_{-\rho^*, \zeta^*} V^*, \courant{\,,\,}, \bullet)$ be the semi-direct product of $(A, [\,,\,], \cdot)$ and $(V^*, -\rho^*, \zeta^*)$.
	Let $T: V \rightarrow A$ be a linear map.
	Then $\hat{r} := r^T - \tau(r^T)$ is a skew-symmetric solution of the GPYBE in $(A \ltimes_{-\rho^*, \zeta^*} V^*, \courant{\,,\,}, \bullet)$ if and only if (for all $a \in A$ and $u, v \in V$)
	\begin{align}
		[a, \mathcal{L}_{T}(u, v)] - \mathcal{L}_{T}(\rho(a)u, v) - \mathcal{L}_{T}(u, \rho(a)v)  & = 0, \label{eq:glgp1} \\
		\rho(\mathcal{L}_{T}(u, v))w + \rho(\mathcal{L}_{T}(v, w))u + \rho(\mathcal{L}_{T}(w, u))v & = 0, \label{eq:glgp2} \\
		a \cdot \mathcal{A}_T(u, v) - \mathcal{A}_T(\zeta(a)u, v)                 & = 0, \label{eq:glgp3} \\
		\zeta(\mathcal{A}_{T}(u, v))w - \zeta(\mathcal{A}_{T}(w, u))v & = 0, \label{eq:glgp4} \\
		[a, \mathcal{A}_T(u, v)] - \mathcal{L}_T(\zeta(a)u, v) + \mathcal{L}_T(u, \zeta(a)v)    & = 0, \label{eq:glgp5} \\
		\mathcal{A}_T(\rho(a)u, v) - a \cdot \mathcal{L}_T(u, v) + \mathcal{L}_T(u, \zeta(a)v)   & = 0, \label{eq:glgp6} \\
		\rho(\mathcal{A}_T(u, v)) w - \zeta(\mathcal{L}_T(v, w)) u + \zeta(\mathcal{L}_T(w, u)) v & = 0, \label{eq:glgp7} 
	\end{align}
	where $\mathcal{L}_{T}$ and $\mathcal{A}_{T}$ are respectively defined by
	\begin{align*}
		\mathcal{L}_{T}(u, v) & := [T(u), T(v)] - T(\rho(T(u))v - \rho(T(v))u),   \\
		\mathcal{A}_{T}(u, v) & := T(u) \cdot T(v) - T(\zeta(T(u))v + \zeta(T(v))u).
	\end{align*}
\end{theorem}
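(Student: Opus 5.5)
The strategy is to compute the two tensors $\mathbf{C}(\hat r)$ and $\mathbf{A}(\hat r)$ inside $\mathfrak{A}^{\otimes 3}$ explicitly in terms of $\mathcal{L}_T$ and $\mathcal{A}_T$. Then substitute into Eqs.~\eqref{eq:gepbe1}--\eqref{eq:gepbe3}, test against elements of $\mathfrak{A}$ and of $\mathfrak{A}^*$, and read off the conditions component by component.

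Fix a basis $\{e_i\}$ of $V$ with dual basis $\{e^i\}$, so that
\[
\hat r=\sum_i \big(T(e_i)\otimes e^i-e^i\otimes T(e_i)\big).
\]
In $\mathfrak{A}$, the products of two elements of $V^*$ vanish, and $\courant{a,e^i}=-\rho^*(a)e^i$, $a\bullet e^i=\zeta^*(a)e^i$. The first step is the analogue of the classical computation of Bai for the Lie case and of Bai--Guo--Ni for the associative case. It should show that $\mathbf{C}(\hat r)$ has no $A\otimes A\otimes A$ component, and no components with two or more factors in $V^*$. Its components of type $A\otimes V^*\otimes V^*$ and its cyclic permutations should be exactly $\sum_{j,k}\mathcal{L}_T(e_j,e_k)\otimes e^j\otimes e^k$, suitably permuted with signs.

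This is obtained by pairing with $x^*\otimes u\otimes v$ and using that $\langle \rho^*(T(e_i))e^j, u\rangle$ encodes $\rho(T(\cdot))$. The same computation gives $\mathbf{A}(\hat r)$ in terms of $\sum \mathcal{A}_T(e_j,e_k)\otimes e^j\otimes e^k$ in the three positions. The skew-symmetry of $\hat r$ makes $\mathbf{C}(\hat r)$ totally skew-symmetric. For $\mathbf{A}(\hat r)$ I expect a partial symmetry, which explains why \eqref{eq:glgp4} has only two terms.

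Next, apply the operators in \eqref{eq:gepbe1}--\eqref{eq:gepbe3} for a general element $a+\xi\in\mathfrak{A}$, where $a\in A$ and $\xi\in V^*$, and pair the result with $x^*\otimes u\otimes v$ or $w\otimes u\otimes v$ in $\mathfrak{A}^{*\otimes 3}=(A^*\oplus V)^{\otimes 3}$.

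For $a\in A$, the $A$-component of the result yields the invariance-type identities \eqref{eq:glgp1}, \eqref{eq:glgp3}, \eqref{eq:glgp5} and \eqref{eq:glgp6}. This uses $\ad(a)$ on the $A$-slot together with $-\rho^*(a)$ and $\zeta^*(a)$ acting on the $V^*$-slots, transposed into $\rho(a)$ and $\zeta(a)$ on $u$ and $v$.

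For $\xi\in V^*$, the operator $\ad(\xi)$ sends the $A$-factor $\mathcal{L}_T(e_j,e_k)$ to $\rho^*(\mathcal{L}_T(e_j,e_k))\xi\in V^*$, and kills $V^*$-factors. Pairing with $w\otimes u\otimes v$ then gives the cyclic identities \eqref{eq:glgp2}, \eqref{eq:glgp4} and \eqref{eq:glgp7}. Linearity in $a+\xi$ justifies treating the two cases separately, and all remaining components vanish identically.

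The main obstacle is bookkeeping in \eqref{eq:gepbe2} and \eqref{eq:gepbe3}, which are asymmetric in the tensor slots. One must track exactly which slot carries the $A$-factor and the signs coming from $-\rho^*$ and from $\tau$. Only after this tracking can one match the seven listed identities and check that the redundant ones coincide up to relabelling of $u,v,w$. I would handle this by fixing the convention $\mathbf{C}(\hat r)=\sum\big(\Phi_{123}+\Phi_{231}+\Phi_{312}\big)$ with $\Phi=\mathcal{L}_T\otimes\mathrm{id}\otimes\mathrm{id}$ once, and then evaluating each operator slot-wise.
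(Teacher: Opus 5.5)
Your plan is essentially the paper's proof: expand $\hat r$ in a basis, and write $\mathbf{C}(\hat r)$ and $\mathbf{A}(\hat r)$ as sums of $\mathcal{L}_T$, resp.\ $\mathcal{A}_T$, placed in each of the three slots with $e^i\otimes e^j$ in the other two. Then apply the operators of Eqs.~\eqref{eq:gepbe1}--\eqref{eq:gepbe3} separately for $a\in A$ and $\xi\in V^*$, and read off Eqs.~\eqref{eq:glgp1}--\eqref{eq:glgp7} slot by slot.

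Two small corrections, neither of which affects the argument. First, every nonzero component has exactly one $A$-factor and two $V^*$-factors, so you mean that there are no components with two or more factors in $A$, not in $V^*$. Second, $\mathbf{A}(\hat r)$ is in fact totally symmetric, because $\mathcal{A}_T$ is symmetric; the two-term shape of \eqref{eq:glgp4} comes from the two-term operator in \eqref{eq:gepbe2}, not from a lack of symmetry.
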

\begin{proof}
	Let $\{e_1, \cdots, e_n\}$ be a basis of $V$ and $\{e^1, \cdots, e^n\}$ be the dual basis.
	Then
	\begin{equation*}
		\hat{r} := r^{T} - \tau(r^T) = \sum_{i} \big( T(e_i) \otimes e^i - e^i \otimes T(e_i) \big).
	\end{equation*}
	Thus,
	\begin{align*}
		\mathbf{C}(\hat{r}) & =\sum_{i, j}\big( \mathcal{L}_{T}(e_i, e_j) \otimes e^i \otimes e^j + e^i \otimes e^j \otimes \mathcal{L}_T(e_i, e_j) - e^i \otimes \mathcal{L}_T(e_i, e_j) \otimes e^j\big),  \\
		\mathbf{A}(\hat{r}) & = \sum_{i, j}\big( \mathcal{A}_{T}(e_i, e_j) \otimes e^i \otimes e^j + e^i \otimes e^j \otimes \mathcal{A}_T(e_i, e_j) + e^i \otimes \mathcal{A}_T(e_i, e_j) \otimes e^j \big).
	\end{align*}
	Note that $(\ad_{\courant{,}}(a + \xi^*) \otimes \id \otimes \id + \id \otimes \ad_{\courant{,}}(a + \xi^*) \otimes \id + \id \otimes \id \otimes \ad_{\courant{,}}(a + \xi^*))\mathbf{C}(\hat{r}) = 0$ holds for all $a \in A$ and $\xi^* \in V^*$ if and only if
	\begin{align*}
		(\ad_{\courant{,}}(e_{k}) \otimes \id \otimes \id + \id \otimes \ad_{\courant{,}}(e_{k}) \otimes \id + \id \otimes \id \otimes \ad_{\courant{,}}(e_{k}))\mathbf{C}(\hat{r}) &= 0, \\
		(\ad_{\courant{,}}(e^k) \otimes \id \otimes \id + \id \otimes \ad_{\courant{,}}(e^k) \otimes \id + \id \otimes \id \otimes \ad_{\courant{,}}(e^k))\mathbf{C}(\hat{r}) &= 0,
	\end{align*}
	hold for all $e_{k} \in A$ and $e^k \in V^*$ ($k =1,\cdots n$). 
	For all $e_{k} \in A$ and $s, t, l, k \in \{1, \cdots n\}$, we have
	\begin{align*}
		&(\ad_{\courant{,}}(e_{k}) \otimes \id \otimes \id + \id \otimes \ad_{\courant{,}}(e_{k}) \otimes \id + \id \otimes \id \otimes \ad_{\courant{,}}(e_{k}))\mathbf{C}(\hat{r}) \\
		&= \sum_{i, j}\big( [e_{k}, \mathcal{L}_{T}(e_i, e_j)] \otimes e^i \otimes e^j - e^i \otimes e^j \otimes \mathcal{L}_T(\rho(e_{k}) e_i, e_j) + e^i \otimes \mathcal{L}_T(e_i, \rho(e_{k}) e_j) \otimes e^j \big) \\
		&\quad + \sum_{i, j}\big( -\mathcal{L}_{T}(\rho(e_{k})e_i, e_j) \otimes e^i \otimes e^j - e^i \otimes e^j \otimes \mathcal{L}_T(e_i, \rho(e_{k}) e_j) - e^i \otimes [e_{k}, \mathcal{L}_T(e_i, e_j)] \otimes e^j \big) \\
		&\quad + \sum_{i, j}\big( -\mathcal{L}_{T}(e_i, \rho(e_{k})e_j) \otimes e^i \otimes e^j + e^i \otimes e^j \otimes [e_{k}, \mathcal{L}_T(e_i, e_j)] + e^i \otimes \mathcal{L}_T(e_i, \rho(e_{k}) e_j) \otimes e^j \big) \\
		&= \sum_{i,j}\big( [e_{k}, \mathcal{L}_{T}(e_i, e_j)] - \mathcal{L}_{T}(\rho(e_{k})e_i, e_j) - \mathcal{L}_{T}(e_i, \rho(e_{k})e_j ) \big) \otimes e^i \otimes e^j \\
		&\quad + \sum_{i,j} e^i \otimes e^j \otimes \big( [e_{k}, \mathcal{L}_{T}(e_i, e_j)] - \mathcal{L}_{T}(\rho(e_{k})e_i, e_j) - \mathcal{L}_{T}(e_i, \rho(e_{k})e_j ) \big) \\
		&\quad - \sum_{i, j} e^i \otimes \big( [e_{k}, \mathcal{L}_{T}(e_i, e_j)] - \mathcal{L}_{T}(\rho(e_{k})e_i, e_j) - \mathcal{L}_{T}(e_i, \rho(e_{k})e_j ) \big) \otimes e^j,
	\end{align*}
	and 
	\begin{align*}
		&\langle (\ad_{\courant{,}}(e^k) \otimes \id \otimes \id + \id \otimes \ad_{\courant{,}}(e^k) \otimes \id + \id \otimes \id \otimes \ad_{\courant{,}}(e^k))\mathbf{C}(\hat{r}), e_s \otimes e_t \otimes e_l \rangle \\
		&= \langle \sum_{i, j} (\rho(\mathcal{L}_{T}(e_i, e_j))e^k \otimes e^i \otimes e^j - e^i \otimes \rho(\mathcal{L}_{T}(e_i, e_j))e^k \otimes e^j + e^i \otimes e^j \otimes \rho(\mathcal{L}_{T}(e_i, e_j))e^k, e_s \otimes e_t \otimes e_k \rangle \\
		&= \langle e^k, \rho(\mathcal{L}_{T}(e_t, e_k))e_s - \rho(\mathcal{L}_{T}(e_s, e_k))e_t + \rho(\mathcal{L}_{T}(e_s, e_t))e_k \rangle \\
		&= \langle e^k, \rho(\mathcal{L}_{T}(e_t, e_k))e_s + \rho(\mathcal{L}_{T}(e_k, e_s))e_t + \rho(\mathcal{L}_{T}(e_s, e_t))e_k \rangle.
	\end{align*}
	Hence, $(\ad_{\courant{,}}(a + \xi^*) \otimes \id \otimes \id + \id \otimes \ad_{\courant{,}}(a + \xi^*) \otimes \id + \id \otimes \id \otimes \ad_{\courant{,}}(a + \xi^*))\mathbf{C}(\hat{r}) = 0$ holds for all $a \in A$ and $\xi^* \in V^*$ if and only if
	\begin{align*}
		[e_{k}, \mathcal{L}_{T}(e_i, e_j)] - \mathcal{L}_{T}(\rho(e_{k})e_i, e_j) - \mathcal{L}_{T}(e_i, \rho(e_{k})e_j ) &= 0, \\
		\rho(\mathcal{L}_{T}(e_i, e_j))e_k + \rho(\mathcal{L}_{T}(e_j, e_k))e_i + \rho(\mathcal{L}_{T}(e_k, e_i))e_j &= 0,
	\end{align*}
	hold for all $a \in A$ and $i, j, k \in \{1, \cdots n\}$, 
	which is equivalent to the condition that Eqs.~\eqref{eq:glgp1} and \eqref{eq:glgp2} hold for all $a \in A$ and $u, v, w \in V$.

	Similarly, $(L_{\bullet}(a + \xi^*) \otimes \id \otimes \id - \id \otimes \id \otimes L_{\bullet}(a + \xi^*))\mathbf{A}(\hat{r}) = 0$ holds for all $a \in A$ and $\xi^* \in V^*$ if and only if Eqs.~\eqref{eq:glgp3}-\eqref{eq:glgp4} hold for all $a \in A$ and $u, v, w \in V$.
	Finally, $(\ad_{\courant{,}}(a + \xi^*) \otimes \id \otimes \id)\mathbf{A}(\hat{r}) = (\id \otimes L_{\bullet}(a + \xi^*) \otimes \id - \id \otimes \id \otimes L_{\bullet}(a + \xi^*))\mathbf{C}(\hat{r})$ holds for all $a \in A$ and $\xi^* \in V^*$ if and only if Eqs.~\eqref{eq:glgp5}-\eqref{eq:glgp7} hold for all $a \in A$ and $u, v, w \in V$.
	The proof is complete.
\end{proof}

\begin{proposition}\label{prop:ex2gp}
	Let $(A, [\,,\,], \cdot)$ be a Poisson algebra, $(V, [\,,\,]_V, \cdot_V, \rho, \zeta)$ be an $A$-module Poisson algebra, and $S, T: V \to A$ two linear maps.
	Let $(A \ltimes_{-\rho^*, \zeta^*} V^*, \courant{\,,\,}, \bullet)$ be the semi-direct product of $(A, [\,,\,], \cdot)$ and $(V^*, -\rho^*, \zeta^*)$.
	If $S$ satisfies Eqs.~\eqref{eq:bal}, \eqref{eq:ainv} and \eqref{eq:eqvm} and $T: V \to A$ is an extended $\mathcal{O}$-operator of weight $\lambda$ with extension $S$ of mass $(\kappa, \mu)$ associated to $(V, [\,,\,]_V, \cdot_V, \rho, \zeta)$,
	then $r^T - \tau(r^T)$ is a skew-symmetric solution of the GPYBE in $(A \ltimes_{-\rho^*, \zeta^*} V^*, \courant{\,,\,}, \bullet)$ if and only if
	\begin{align}
		\lambda [a, T([u, v]_V)] - \lambda T(\rho(a)[u, v]_V)                                            & = 0, \label{eq:eogp1} \\
		\lambda \rho(T([u, v]_V))w + \lambda \rho(T([v, w]_V))u + \lambda \rho(T([w, u]_V))v                            & = 0, \label{eq:eogp2 }\\
		\lambda a \cdot T(u \cdot_V v) - \lambda T\big( \zeta(a)(u \cdot_V v) \big)                                 & = 0, \label{eq:eogp3} \\
		\lambda \zeta(T(u \cdot_V v))w - \lambda \zeta(T(w \cdot_V u))v                                       & = 0, \label{eq:eogp4} \\
		\lambda [a, T(u \cdot_V v) ] - \lambda T \big(\rho(a)(u \cdot_V v) \big) + \mu [a, S(u \cdot_V v) ] - \mu S \big(\rho(a)(u \cdot_V v) \big) & = 0, \label{eq:eogp5} \\
		\lambda a \cdot T([u, v]_V) -\lambda T\big(\zeta(a) [u, v]_V \big) + \mu a \cdot S([u, v]_V) - \mu S\big(\zeta(a) [u, v]_V \big) & = 0, \label{eq:eogp6} \\
		\lambda \rho(T(u \cdot_V v))w + \lambda \zeta(T([w, v]_V))u + + \lambda \zeta(T([w, u]_V))v                         & = 0, \label{eq:eogp7} 
	\end{align}
	for all $u, v, w \in V$.
\end{proposition}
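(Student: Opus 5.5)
The plan is to reduce everything to Theorem~\ref{thm:gl2gp}. The module $(V,\rho,\zeta)$ underlies the $A$-module Poisson algebra, so that theorem applies to $T$. It says that $r^T-\tau(r^T)$ is a skew-symmetric solution of the GPYBE in $A\ltimes_{-\rho^*,\zeta^*}V^*$ if and only if Eqs.~\eqref{eq:glgp1}--\eqref{eq:glgp7} hold. Those equations are written in terms of the weight-zero defects $\mathcal{L}_T(u,v)$ and $\mathcal{A}_T(u,v)$. The extended $\mathcal{O}$-operator identities \eqref{eq:ext-o-lie}--\eqref{eq:ext-o-assoc} let us rewrite these defects as
\begin{align*}
\mathcal{L}_T(u,v)&=\kappa[S(u),S(v)]+\mu S([u,v]_V)+\lambda T([u,v]_V),\\
\mathcal{A}_T(u,v)&=\kappa S(u)\cdot S(v)+\mu S(u\cdot_V v)+\lambda T(u\cdot_V v).
\end{align*}
We substitute these expressions into each of \eqref{eq:glgp1}--\eqref{eq:glgp7}. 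The claim then amounts to showing that the $\kappa$-terms vanish identically, that most $\mu$-terms vanish, and that what remains is exactly \eqref{eq:eogp1}--\eqref{eq:eogp7}.

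\textbf{The $\kappa$-terms.} These all involve $[S(u),S(v)]$ or $S(u)\cdot S(v)$, and they are handled with \eqref{eq:ainv} and \eqref{eq:bal}. In \eqref{eq:glgp1}, the Jacobi identity together with $\kappa[a,S(v)]=\kappa S(\rho(a)v)$ turns $[a,[S(u),S(v)]]$ into $[S(\rho(a)u),S(v)]+[S(u),S(\rho(a)v)]$, and these cancel. Eq.~\eqref{eq:glgp3} is treated the same way using associativity and commutativity. Eqs.~\eqref{eq:glgp5} and \eqref{eq:glgp6} use the Leibniz rule: first $\kappa S(\zeta(a)u)=\kappa\, a\cdot S(u)$ and $\kappa S(\rho(a)u)=\kappa[a,S(u)]$, then the Leibniz rule closes the expression. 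For the three-variable equations \eqref{eq:glgp2}, \eqref{eq:glgp4} and \eqref{eq:glgp7}, the $\kappa$-parts are precisely Eqs.~\eqref{eq:ipf2}, \eqref{eq:ipf3} and \eqref{eq:ipf4} of Lemma~\ref{lem:baie}. For \eqref{eq:glgp7} this needs the rewriting $\zeta([S(w),S(u)])v=-\zeta([S(u),S(w)])v$ and a relabelling of variables.

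\textbf{The $\mu$-terms.} In the three-variable equations these vanish by \eqref{eq:ipf5}, \eqref{eq:ipf6} and \eqref{eq:ipf1}. In \eqref{eq:glgp1} the $\mu$-part is
\[
\mu[a,S([u,v]_V)]-\mu S([\rho(a)u,v]_V+[u,\rho(a)v]_V)=\mu\big([a,S([u,v]_V)]-S(\rho(a)[u,v]_V)\big),
\]
using the derivation property \eqref{eq:pmc}. This vanishes if $\kappa\neq0$ by $A$-invariance. In general, I expect it must be handled by the equivalence condition \eqref{eq:eqvm} combined with balancedness, and this point needs care. Eq.~\eqref{eq:glgp3} is handled similarly via $\zeta(a)(u\cdot_V v)=(\zeta(a)u)\cdot_V v$. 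In \eqref{eq:glgp5} and \eqref{eq:glgp6} the $\mu$-parts do not cancel, and they appear explicitly in \eqref{eq:eogp5} and \eqref{eq:eogp6}.

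\textbf{The $\lambda$-terms.} These are simplified with the same module-Poisson-algebra identities \eqref{eq:pmc} and \eqref{eq:pmd}. For example, in \eqref{eq:glgp5},
\[
-\lambda T([\zeta(a)u,v]_V)+\lambda T([u,\zeta(a)v]_V)
\]
should combine, via \eqref{eq:pmd} and the Leibniz rule in $V$, into $-\lambda T(\rho(a)(u\cdot_V v))$. The remaining $\lambda$-terms give \eqref{eq:eogp1}--\eqref{eq:eogp7} after relabelling.

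\textbf{Main obstacle.} I expect the hardest part to be the bookkeeping in the mixed equations \eqref{eq:glgp5}--\eqref{eq:glgp7}. There one must choose the right combination of \eqref{eq:pmd} and the derivation rules so that the $\lambda$- and $\mu$-parts collapse to single terms in $u\cdot_V v$ or $[u,v]_V$. My approach is to expand $[\zeta(a)u,v]_V$ via \eqref{eq:pmd} and antisymmetry, then use $\rho(a)(u\cdot_V v)$ expanded by \eqref{eq:pmd}, and match term by term.
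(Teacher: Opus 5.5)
Your route is exactly the paper's: substitute $\mathcal{L}_T(u,v)=\lambda T([u,v]_V)+\kappa[S(u),S(v)]+\mu S([u,v]_V)$ and the analogous $\mathcal{A}_T$ into Theorem~\ref{thm:gl2gp}, then cancel. Your handling of the $\kappa$-parts is correct. So is your handling of the $\mu$-parts of the three-variable equations via \eqref{eq:ipf1}, \eqref{eq:ipf5}, \eqref{eq:ipf6}, and of the $\lambda$-parts.

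The gap is the step you flagged. The $\mu$-part of \eqref{eq:glgp1} is $\mu\big([a,S([u,v]_V)]-S(\rho(a)[u,v]_V)\big)$, and that of \eqref{eq:glgp3} is $\mu\big(a\cdot S(u\cdot_V v)-S(\zeta(a)(u\cdot_V v))\big)$. These are $A$-invariance defects of $S$, i.e.\ identities in $A$. By contrast, \eqref{eq:eqvm} and \eqref{eq:bal} only constrain the operators $\rho(S(\cdot))$ and $\zeta(S(\cdot))$ on $V$, so they cannot force these defects to vanish.

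When $\kappa=0\neq\mu$, \eqref{eq:ainv} is vacuous and the claimed equivalence is actually false. Take the nonabelian Lie algebra $\mathfrak{g}$ with $[e_1,e_2]=e_2$. Let $A=V=\mathfrak{g}$ with zero products, $[\,,\,]_V=[\,,\,]$ and $\rho=\zeta=0$, and put $\lambda=\kappa=0$, $T=\id$, $S=\mu^{-1}\id$. Then:
\begin{itemize}
\item \eqref{eq:bal}, \eqref{eq:ainv}, \eqref{eq:eqvm} hold;
\item $T$ is an extended $\mathcal{O}$-operator of mass $(0,\mu)$, since $[T(u),T(v)]=[u,v]=\mu S([u,v]_V)$;
\item \eqref{eq:eogp1}--\eqref{eq:eogp7} hold trivially.
\end{itemize}
But $\mathcal{L}_T(u,v)=[u,v]$, so \eqref{eq:glgp1} reads $[a,[u,v]]=0$, which fails for $a=u=e_1$, $v=e_2$. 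Directly, \eqref{eq:gepbe1} in $\mathfrak{g}\ltimes_{0,0}\mathfrak{g}^*$ with $a=e_1$ has the nonzero component $e_2\otimes(e^1\otimes e^2-e^2\otimes e^1)$.

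So the step cannot be closed the way you hoped. You have two options:
\begin{itemize}
\item assume $\kappa\neq0$, i.e.\ $S$ is a balanced $A$-module homomorphism, and then \eqref{eq:ainv} kills these defects directly;
\item for $\kappa=0$, carry the two $\mu$-defects into \eqref{eq:eogp1} and \eqref{eq:eogp3}.
\end{itemize}
Your claim that the $\mu$-parts of \eqref{eq:glgp5} and \eqref{eq:glgp6} ``do not cancel'' is also inconsistent with how you treat \eqref{eq:glgp1}. They are the same defects, evaluated at $u\cdot_V v$ and $[u,v]_V$. For $\kappa\neq0$ they vanish too, so the $\mu$-terms in \eqref{eq:eogp5} and \eqref{eq:eogp6} are redundant; for $\kappa=0$ all four survive. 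The paper's one-line proof asserts the termwise equivalence without addressing this point, so the printed statement is valid only when $\kappa\neq0$ or $\mu=0$.
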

\begin{proof}
	Since $T: V \to A$ is an extended $\mathcal{O}$-operator of weight $\lambda$ with extension $S$ of mass $(\kappa, \mu)$ associated to $(V, [\,,\,]_V, \cdot_V, \rho, \zeta)$, we have (for all $u, v \in V$)
	\begin{align*}
		\mathcal{L}_{T}(u, v) & := [T(u), T(v)] - T(\rho(T(u))v - \rho(T(v))u) = \lambda T([u, v]_V) + \kappa [S(u), S(v)] + \mu S([u, v]_V),        \\
		\mathcal{A}_{T}(u, v) & := T(u) \cdot T(v) - T(\zeta(T(u))v + \zeta(T(v))u) = \lambda T(u \cdot_V v) + \kappa S(u) \cdot S(v) + \mu S(u \cdot_V v).
	\end{align*}
	Since $S$ satisfies Eqs.~\eqref{eq:bal}-\eqref{eq:eqvm}, it follows that Eqs.~\eqref{eq:glgp1}-\eqref{eq:glgp7} and Eqs.~\eqref{eq:eogp1}-\eqref{eq:eogp7} are respectively equivalent.
	The conclusion then follows from Theorem~\ref{thm:gl2gp}.
\end{proof}

\begin{corollary}
	Let $(A, [\,,\,], \cdot)$ be a Poisson algebra.
	\begin{enumerate}[label=(\roman*)]
		\item
		   Let $(V, \rho, \zeta)$ be a module of $(A, [\,,\,], \cdot)$ and $(A \ltimes_{-\rho^*, \zeta^*} V^*, \courant{\,,\,}, \bullet)$ be the semi-direct product of $(A, [\,,\,], \cdot)$ and $(V^*, -\rho^*, \zeta^*)$.
		   If $S$ satisfies Eqs.~\eqref{eq:bal} and \eqref{eq:ainv}, and $T: V \to A$ is an extended $\mathcal{O}$-operator of weight $\lambda$ with extension $S$ of mass $(\kappa, 0)$ associated to $(V, \rho, \zeta)$,
		   then $r^T - \tau(r^T)$ is a skew-symmetric solution of the GPYBE in $(A \ltimes_{-\rho^*, \zeta^*} V^*, \courant{\,,\,}, \bullet)$.

		\item
			 Let $(A \ltimes_{-\ad_{[,]}^*, L_\cdot^*} A^*, \courant{\,,\,}, \bullet)$ be the semi-direct product of $(A, [\,,\,], \cdot)$ and $(A^*, -\ad_{[,]}^*, L_\cdot^*)$.
		   If $T: A \rightarrow A$ be an extended $\mathcal{O}$-operator of weight 0 with extension $\id: A \rightarrow A$ of mass $(\kappa, 0)$ associated to $(A, \ad_{[,]}, L_\cdot)$,
		   then $r^T - \tau(r^T)$ is a skew-symmetric solution of the GPYBE in $(A \ltimes_{-\ad_{[,]}^*, L_\cdot^*} A^*, \courant{\,,\,}, \bullet)$.
	\end{enumerate}
\end{corollary}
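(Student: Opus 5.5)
Both parts of the corollary will follow from Proposition~\ref{prop:ex2gp}; all we have to check is that the seven conditions \eqref{eq:eogp1}--\eqref{eq:eogp7} hold automatically in the stated situations.

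For part (i), we regard the module $(V,\rho,\zeta)$ as an $A$-module Poisson algebra with trivial Poisson structure, so $[\,,\,]_V=0$ and $\cdot_V=0$. We first check the hypotheses of Proposition~\ref{prop:ex2gp}.
\begin{enumerate}
\item[(a)] Eq.~\eqref{eq:eqvm} holds for any mass $\mu$ when $[\,,\,]_V=0$ and $\cdot_V=0$: the left-hand sides $\mu\rho(S(0))w$ and $\mu\zeta(S(0))w$ vanish, and so do the right-hand sides $\mu[\cdot,w]_V$ and $\mu(\cdot)\cdot_V w$.
\item[(b)] In particular Eq.~\eqref{eq:eqvm} holds with $\mu=0$, so together with the assumed Eqs.~\eqref{eq:bal} and \eqref{eq:ainv}, $S$ satisfies all three conditions.
\item[(c)] $T$ is by assumption an extended $\mathcal{O}$-operator of weight $\lambda$ with extension $S$ of mass $(\kappa,0)$. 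With trivial $V$-operations, the weight-$\lambda$ term $\lambda[u,v]_V$ (resp. $\lambda u\cdot_V v$) disappears from the defining equations \eqref{eq:ext-o-lie}--\eqref{eq:ext-o-assoc}.
\end{enumerate}
Each of Eqs.~\eqref{eq:eogp1}--\eqref{eq:eogp7} is a sum of terms containing $[u,v]_V$, $u\cdot_V v$ or one of their variants as an argument of $T$ or $S$. All of these vanish, so each condition reduces to $0=0$ and Proposition~\ref{prop:ex2gp} yields the claim. The terms carrying $\mu$ vanish for two independent reasons, since both $\mu=0$ and $\cdot_V=0$.

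As a cross-check that avoids Proposition~\ref{prop:ex2gp}, one could use Theorem~\ref{thm:gl2gp} directly. Here $\mathcal{L}_T(u,v)=\kappa[S(u),S(v)]$ and $\mathcal{A}_T(u,v)=\kappa S(u)\cdot S(v)$, so Eqs.~\eqref{eq:glgp2} and \eqref{eq:glgp4} follow from Eqs.~\eqref{eq:ipf2}--\eqref{eq:ipf3} of Lemma~\ref{lem:baie}. The invariance-type equations \eqref{eq:glgp1}, \eqref{eq:glgp3}, \eqref{eq:glgp5} and \eqref{eq:glgp6} follow from $A$-invariance and balancedness of $S$ together with the Poisson identities of $A$. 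The main obstacle in this route is the mixed equations, \eqref{eq:glgp5}--\eqref{eq:glgp7}. For these I would use Eq.~\eqref{eq:ipf4} and the module compatibility \eqref{eq:repp}, as in the proof of Lemma~\ref{lem:baie}. Since the proposition route already absorbs this work, I would present only that route.

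For part (ii), we specialise (i) to $(V,\rho,\zeta)=(A,\ad_{[,]},L_\cdot)$ with $S=\id$; the weight-$0$ hypothesis in (ii) is just the case $\lambda=0$ of (i). By the earlier Example, $\id$ is balanced and $A$-invariant of every mass $\kappa$ associated to $(A,\ad_{[,]},L_\cdot)$, so Eqs.~\eqref{eq:bal} and \eqref{eq:ainv} hold. Applying (i) with trivial Poisson structure on the module gives that $r^T-\tau(r^T)$ is a skew-symmetric solution of the GPYBE in $(A\ltimes_{-\ad^*_{[,]},L^*_\cdot}A^*,\courant{\,,\,},\bullet)$.
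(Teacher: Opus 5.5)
Your proposal is correct and is essentially the paper's own argument. You regard $(V,\rho,\zeta)$ as an $A$-module Poisson algebra with trivial bracket and product, so Eq.~\eqref{eq:eqvm} holds vacuously and each of Eqs.~\eqref{eq:eogp1}--\eqref{eq:eogp7} collapses to $0=0$; Proposition~\ref{prop:ex2gp} then gives (i), and (ii) is the specialization to $(A,\ad_{[,]},L_\cdot)$ with $S=\id$, with your explicit hypothesis checks only spelling out what the paper's one-line proof leaves implicit.
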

\begin{proof}
	Interpreting a module of $(A, [\,,\,], \cdot)$ as an $A$-module Poisson algebra endowed with the trivial Poisson algebra structure, the conclusion is a direct consequence of Proposition~\ref{prop:ex2gp}.
\end{proof}

\begin{example}\label{HesPois2}
Let $(A, [\,,\,], \cdot)$ be the 3-dimensional Poisson algebra given in Example~\ref{HesPois} with basis $\{e_1,e_2,e_3\}$ whose non-zero bracket and multiplication are given by Eq.~\eqref{HPCF1}. Define linear maps $ T: A \to A$ by
\begin{align*} 
T\left(e_{1}\right)&=2e_{1}, && T\left(e_{2}\right)=\frac{1}{2} e_{2}, 
&& T\left(e_{3}\right)=0. 
\end{align*}
 Then $T: A \rightarrow A$ is an extended $\mathcal{O}$-operator of weight 0 with extension $S = \id$ of mass $(1, 0)$ associated to $(A, \ad_{[,]}, L_\cdot)$.
	Let $\{e_1^*, e^*_2, e_3^*\}$ be the dual basis of $\{e_1,e_2,e_3\}$.
	Then 
	\begin{equation*}
		r^T - \tau(r^T) = 2e_1 \otimes e_1^* + \frac{1}{2}e_2 \otimes e_2^*  -2e_1^* \otimes e_1 - \frac{1}{2}e_2^* \otimes e_2
	\end{equation*}
	is a skew-symmetric solution of the GPYBE in $(A \ltimes_{-\ad_{[,]}^*, L_\cdot^*} A^*, \courant{\,,\,}, \bullet)$, where non-zero bracket $\courant{,}$ and multiplication $\bullet$ are given by
	\begin{equation}
	\courant{e_1,e_2} = e_3, \;\;  \courant{e_3^*,e_1}= e_2^*, \;\; \courant{e_3^*,e_2} = -e_1^*, \;\;
	e_1 \bullet e_2 = e_3, \;\; e_3^* \bullet e_1 = e_2^*, \;\; e_3^* \bullet e_2 = e_1^*. \label{HesPoisSP}
	\end{equation}

\end{example}

\subsection{The extended Poisson Yang-Baxter equations}\label{ss:epybe}

\begin{definition}\label{def:epybe}
	Let $(A, [\,,\,], \cdot)$ be a Poisson algebra and $\epsilon \in \mathbf{k}$.
	We say that $r\in A \otimes A$ is a solution of the \textbf{extended Poisson Yang-Baxter equation (EPYBE)} of mass $\epsilon$ in $(A, [\,,\,], \cdot)$ if
	\begin{align}
		\mathbf{C}(r) & = \epsilon [ r_{13} +r_{31}, r_{23} + r_{32} ], \label{eq:epybel}       \\
		\mathbf{A}(r) & = \epsilon ( r_{13} +r_{31}) \cdot ( r_{23} + r_{32} ). \label{eq:epybea}
	\end{align}
\end{definition}
 
\begin{remark}
	When $\epsilon = 0$ or $r$ is skew-symmetric, the EPYBE of mass $\epsilon$ coincides with the PYBE~\cite{ni2013poisson}.
	Later, we will show that solutions of the EPYBE whose symmetric part is $(\ad, L)$-invariant are also solutions of the GPYBE.
\end{remark}

\begin{remark}
	When the symmetric part of $r$ is $(\ad, L)$-invariant, a quick computation shows the EPYBE of mass $\epsilon$ in $(A, [\,,\,], \cdot)$ is equivalent to either of the following forms:
	\begin{equation*}
		\mathbf{C}(r) = \epsilon [r_{23} + r_{32}, r_{12} + r_{21}], \;\; \mathbf{A}(r) = \epsilon ( r_{23} +r_{32}) \cdot ( r_{12} + r_{21} )
	\end{equation*}
	or
	\begin{equation*}
		\mathbf{C}(r) = \epsilon [r_{12} + r_{21}, r_{13} + r_{31}], \;\; \mathbf{A}(r) = \epsilon ( r_{12} +r_{21}) \cdot ( r_{13} + r_{31} ).
	\end{equation*}
	Indeed, we show that 
	\begin{align*}
		&[r_{23} + r_{32}, r_{12} + r_{21}] \\
		&= \sum_{i,j} \big( a_j \otimes [a_i, b_j] \otimes b_i + a_j \otimes [b_i, b_j] \otimes a_i + b_j \otimes [a_i, a_j] \otimes b_i + b_j \otimes [b_i, a_j] \otimes a_i \big) \\
		&= - \sum_{i,j} \big( a_j \otimes a_i \otimes [b_i, b_j] + a_j \otimes b_i \otimes [a_i, b_j] + b_j \otimes a_i \otimes [b_i, a_j] + b_j \otimes b_i \otimes [a_i, a_j] \big) \\
		&= [ r_{13} +r_{31}, r_{23} + r_{32} ],
	\end{align*}
	and the other identities follow similarly.
\end{remark}

\begin{theorem}\label{thm:epybe-o}
	Let $(A, [\,,\,], \cdot)$ be a Poisson algebra and $r \in A \otimes A$.
	Writing $r$ as $r = \Theta + \Lambda$ with $\Theta \in \mathrm{Sym}^2(A)$ and $\Lambda \in \mathrm{Alt}^2(A)$.
	Suppose that $\Theta$ is $(\ad, L)$-invariant.
	Then $r$ is a solution of the EPYBE of mass $\frac{\kappa+1}{4}$ in $(A, [\,,\,], \cdot)$ if and only if $\Lambda_{+}: A^* \rightarrow A$ is an extended $\mathcal{O}$-operator of weight $0$ with extension $\Theta_{+}$ of mass $(\kappa, 0)$ on $(A, [\,,\,], \cdot)$ associated to $(A^*, -\ad_{[,]}^*, L_\cdot^*)$.
\end{theorem}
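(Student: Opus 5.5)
The plan is to translate both sides of the EPYBE into identities on $A^*$ by pairing with $x^*\otimes y^*\otimes z^*$, and then to compare them with the defining identities \eqref{eq:ext-o-lie}--\eqref{eq:ext-o-assoc} for $T=\Lambda_{+}$, $S=\Theta_{+}$, $\lambda=\mu=0$, with the module $(A^*,-\ad_{[,]}^*,L_\cdot^*)$. Write $r=\sum_i a_i\otimes b_i$. Then $r_{13}+r_{31}$ and $r_{23}+r_{32}$ are the legs of $2\Theta$, so the right-hand sides of \eqref{eq:epybel}--\eqref{eq:epybea} become $4\epsilon[\Theta_{13},\Theta_{23}]$ and $4\epsilon\,\Theta_{13}\cdot\Theta_{23}$. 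Pairing these with $x^*\otimes y^*\otimes z^*$ gives $4\epsilon\langle [\Theta_{+}(x^*),\Theta_{+}(y^*)],z^*\rangle$ and $4\epsilon\langle \Theta_{+}(x^*)\cdot\Theta_{+}(y^*),z^*\rangle$, up to a sign fixed by the convention $\langle r_{+}(x^*),y^*\rangle=\langle r,x^*\otimes y^*\rangle$. With $\epsilon=\frac{\kappa+1}{4}$ these terms are $(\kappa+1)[\Theta_{+}(x^*),\Theta_{+}(y^*)]$ and $(\kappa+1)\Theta_{+}(x^*)\cdot\Theta_{+}(y^*)$.

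Next I compute $\mathbf{C}(r)$ and $\mathbf{A}(r)$ paired with $x^*\otimes y^*\otimes z^*$. The standard computation (as in the Lie case of \cite{bai2010nonabelian} and the associative case of \cite{bai2012O}) gives
\[
\langle \mathbf{C}(r),x^*\otimes y^*\otimes z^*\rangle=\langle [r_{+}(x^*),r_{+}(y^*)]+r_{+}(\ad^*_{[,]}(r_{+}(x^*))y^*)-r_{+}(\ad^*_{[,]}(r_{-}(y^*))x^*),z^*\rangle,
\]
possibly up to sign, together with an analogous formula for $\mathbf{A}(r)$ involving $L_\cdot^*$ and $r_{\pm}$. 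I will verify these leg by leg. I then substitute $r_{+}=\Lambda_{+}+\Theta_{+}$ and $r_{-}=\Lambda_{+}-\Theta_{+}$.

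The key step is simplifying the cross terms using Lemma~\ref{lem:sinvb}. Since $\Theta$ is $(\ad,L)$-invariant, $\Theta_{+}$ is both balanced and $A$-invariant of mass $1$ for $(A^*,-\ad^*_{[,]},L^*_\cdot)$, so it is a balanced $A$-module homomorphism. This is the computation already carried out in Proposition~\ref{prop:iphr} with $\pi_{+}=r_{+}$. The mixed terms $[\Lambda_{+}(x^*),\Theta_{+}(y^*)]+[\Theta_{+}(x^*),\Lambda_{+}(y^*)]$ cancel against the $\Theta_{+}(\ldots\Lambda_{+}\ldots)$ terms by $A$-invariance. The terms $\Theta_{+}(\ad^*(\Theta_{+}(x^*))y^*)$ are turned into $\pm[\Theta_{+}(x^*),\Theta_{+}(y^*)]$ by invariance combined with balancedness. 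The result is that $\langle\mathbf{C}(r),\cdot\rangle$ equals $\mathcal{L}_{\Lambda_{+}}(x^*,y^*)+[\Theta_{+}(x^*),\Theta_{+}(y^*)]$ paired with $z^*$. Similarly, $\mathbf{A}(r)$ gives $\mathcal{A}_{\Lambda_+}(x^*,y^*)+\Theta_{+}(x^*)\cdot\Theta_{+}(y^*)$.

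Equating these with the right-hand sides, the EPYBE of mass $\frac{\kappa+1}{4}$ becomes $\mathcal{L}_{\Lambda_+}=\kappa[\Theta_+,\Theta_+]$ and $\mathcal{A}_{\Lambda_+}=\kappa\,\Theta_+\cdot\Theta_+$. These are exactly \eqref{eq:ext-o-lie}--\eqref{eq:ext-o-assoc} with mass $(\kappa,0)$ and weight $0$. Since $x^*,y^*,z^*$ are arbitrary, both implications follow.

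I expect the main obstacle to be the sign and coefficient bookkeeping. The coefficient of the $[\Theta_+,\Theta_+]$ term coming from $\mathbf{C}(r)$ must be exactly $1$, so that subtracting $(\kappa+1)$ leaves $\kappa$. The same must hold for the commutative part, where $\mathbf{A}(r)$ carries the asymmetric sign $-r_{23}\cdot r_{12}$. The remark on equivalent forms of the EPYBE under invariance can be used to rearrange legs conveniently if needed.
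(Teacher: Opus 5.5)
Your proposal is correct and follows the paper's proof essentially line by line. Like the paper, you pair both sides with $x^*\otimes y^*\otimes z^*$, express $\mathbf{C}(r)$ and $\mathbf{A}(r)$ through $r_{\pm}$, substitute $r_{+}=\Lambda_{+}+\Theta_{+}$ and $r_{-}=\Lambda_{+}-\Theta_{+}$, and use that $\Theta_{+}$ is a balanced $A$-module homomorphism to reduce to $\mathcal{L}_{\Lambda_+}(x^*,y^*)+[\Theta_{+}(x^*),\Theta_{+}(y^*)]$ (and its commutative analogue); against $(\kappa+1)[\Theta_{+}(x^*),\Theta_{+}(y^*)]$ this leaves exactly mass $(\kappa,0)$.

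One clarification for your bookkeeping: the $\Theta_{+}(\cdots\Theta_{+}\cdots)$ terms occur only in the combination $\Theta_{+}\big(-\ad_{[,]}^*(\Theta_{+}(x^*))y^*-\ad_{[,]}^*(\Theta_{+}(y^*))x^*\big)$, because of the minus sign in $r_{-}$. This combination vanishes by balancedness, so the coefficient-one term $[\Theta_{+}(x^*),\Theta_{+}(y^*)]$ comes directly from $[r_{+}(x^*),r_{+}(y^*)]$. The signs in your displayed formula for $\langle\mathbf{C}(r),x^*\otimes y^*\otimes z^*\rangle$ are in fact exact.
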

\begin{proof}
	For all $x^*, y^*, z^* \in A^*$, note that $\Theta_{+}: A^* \rightarrow A$ is a balance $A$-module homomorphism associated to $(A^*, -\ad_{[,]}^*, L_\cdot^*)$, we have
	\begin{align*}
		 & [ r_{+}(a^{*}) , r_{+}(b^{*})] - r_{+}(-\ad_{[,]}^{*}(r_{+}(a^{*})) b^{*}+\ad_{[,]}^{*}(r_{-}(b^{*})) a^{*})                                    \\
		 & =[ (\Lambda_{+}+\Theta_{+})(a^{*}) , (\Lambda_{+}+\Theta_{+})(b^{*})] - (\Lambda_{+}+\Theta_{+})(-\ad_{[,]}^{*}((\Lambda_{+}+\Theta_{+})(a^{*})) b^{*}+\ad_{[,]}^{*}((\Lambda_{+}-\Theta_{+})(b^{*})) a^{*}) \\
		 & =[\Lambda_{+}(x^*), \Lambda_{+}(y^*)] - \Lambda_{+}\big(-\ad_{[,]}^*(\Lambda_{+}(x^*))y^* + \ad_{[,]}^*(\Lambda_{+}(y^*))x^*\big) + [\Theta_{+}(x^*), \Theta_{+}(y^*)]      \\
		 & \quad + [\Theta_{+}(x^*), \Lambda_{+}(y^*)] + [\Lambda_{+}(x^*), \Theta_{+}(y^*)] - \Lambda_{+}\big(-\ad_{[,]}^*(\Theta_{+}(x^*))y^* - \ad_{[,]}^*(\Theta_{+}(y^*))x^*\big)         \\
		 & \quad - \Theta_{+}(-\ad_{[,]}^{*}((\Lambda_{+})(a^{*})) b^{*}+\ad_{[,]}^{*}((\Lambda_{+})(b^{*})) a^{*}) - \Theta_{+}(-\ad_{[,]}^{*}((\Theta_{+})(a^{*})) b^{*}+\ad_{[,]}^{*}((-\Theta_{+})(b^{*})) a^{*})     \\
		 & =[\Lambda_{+}(x^*), \Lambda_{+}(y^*)] - \Lambda_{+}\big(-\ad_{[,]}^*(\Lambda_{+}(x^*))y^* + \ad_{[,]}^*(\Lambda_{+}(y^*))x^*\big) + [\Theta_{+}(x^*), \Theta_{+}(y^*)].
	\end{align*}
	Therefore,
	\begin{align*}
		 & \langle [\Lambda_{+}(x^*), \Lambda_{+}(y^*)] - \Lambda_{+}\big(-\ad_{[,]}^*(\Lambda_{+}(x^*))y^* + \ad_{[,]}^*(\Lambda_{+}(y^*))x^*\big) - \kappa [\Theta_{+}(x^*), \Theta_{+}(y^*)], z^* \rangle \\
		 & =\langle [ r_{+}(a^{*}) , r_{+}(b^{*})] - r_{+}(-\ad_{[,]}^{*}(r_{+}(a^{*})) b^{*}+\ad_{[,]}^{*}(r_{-}(b^{*})) a^{*}) - (\kappa+1) [\Theta_{+}(x^*), \Theta_{+}(y^*)], z^*\rangle               \\
		 & =\langle [r_{13}, r_{23}] + [r_{12}, r_{23}] + [r_{12}, r_{13}] - \frac{(\kappa+1)}{4}[r_{13}+r_{31}, r_{23}+r_{32}], x^* \otimes y^* \otimes z^*\rangle.
	\end{align*}
	Similarly, we show that
	\begin{align*}
		 & \langle \Lambda_{+}(x^*) \cdot \Lambda_{+}(y^*) - \Lambda_{+}\big(L_\cdot^*(\Lambda_{+}(x^*))y^* + L_\cdot^*(\Lambda_{+}(y^*))x^* \big) - \kappa \Theta_{+}(x^*) \cdot \Theta_{+}(y^*) , z^* \rangle \\
		 & = \langle r_{12} \cdot r_{13} - r_{12} \cdot r_{23} + r_{13} \cdot r_{23} - \frac{(\kappa+1)}{4} ( r_{13} +r_{31}) \cdot ( r_{23} + r_{32} ), x^* \otimes y^* \otimes z^* \rangle.
	\end{align*}
	Hence, $r$ is a solution of the EPYBE of mass $\frac{\kappa+1}{4}$ in $(A, [\,,\,], \cdot)$ if and only if $\Lambda_{+}: A^* \rightarrow A$ is an extended $\mathcal{O}$-operator of weight $0$ with extension $\Theta_{+}$ of mass $(\kappa, 0)$ on $(A, [\,,\,], \cdot)$ associated to $(A^*, -\ad_{[,]}^*, L_\cdot^*)$.
\end{proof}

\begin{corollary}
	Let $(A, [\,,\,], \cdot)$ be a Poisson algebra.
	If $r \in A \otimes A$ is a solution of the EPYBE of mass $\epsilon$ in $(A, [\,,\,], \cdot)$ with $(\ad, L)$-invariant symmetric part, then $r$ is a solution of the GPYBE in $(A, [\,,\,], \cdot)$.
\end{corollary}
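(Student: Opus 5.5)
The plan is to chain Theorem~\ref{thm:epybe-o} with Proposition~\ref{prop:eocad2gp}; the only real work is matching the mass parameters. Write $r = \Theta + \Lambda$ with $\Theta \in \mathrm{Sym}^2(A)$ and $\Lambda \in \mathrm{Alt}^2(A)$. By hypothesis $\Theta$ is $(\ad, L)$-invariant, and $r$ solves the EPYBE of mass $\epsilon$. Set $\kappa := 4\epsilon - 1$, so that $\epsilon = \frac{\kappa+1}{4}$. Since $\epsilon \in \mathbf{k}$ is arbitrary and the characteristic is $0$, this choice is always available.

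With this $\kappa$, Theorem~\ref{thm:epybe-o} applies directly, because its only hypothesis is the $(\ad, L)$-invariance of $\Theta$. It gives that $\Lambda_{+}: A^* \to A$ is an extended $\mathcal{O}$-operator of weight $0$ with extension $\Theta_{+}$ of mass $(\kappa, 0)$ on $(A, [\,,\,], \cdot)$, associated to $(A^*, -\ad_{[,]}^*, L_\cdot^*)$.

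Next I invoke Proposition~\ref{prop:eocad2gp}. Its hypotheses are exactly the invariance of $\Theta$ and the extended $\mathcal{O}$-operator condition just obtained, so it yields that $r$ is a solution of the GPYBE in $(A, [\,,\,], \cdot)$.

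The one place needing care is internal to Proposition~\ref{prop:eocad2gp}, which relies on Theorem~\ref{thm:eo2ps}. That theorem requires $S = \Theta_{+}$ to satisfy Eqs.~\eqref{eq:bal}, \eqref{eq:ainv} and \eqref{eq:eqvm}. Balancedness and $A$-invariance of mass $1$ come from Lemma~\ref{lem:sinvb}. Invariance of mass $\kappa$ follows from mass $1$ by scaling. Eq.~\eqref{eq:eqvm} is vacuous here for two reasons: the mass $\mu$ is $0$, and the module $A^*$ carries the trivial Poisson structure. So there is no obstacle beyond bookkeeping the parameter $\kappa = 4\epsilon - 1$.
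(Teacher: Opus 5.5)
Your proposal is correct and is essentially the paper's own proof: with $\kappa = 4\epsilon - 1$, Theorem~\ref{thm:epybe-o} makes $\Lambda_{+}$ an extended $\mathcal{O}$-operator of weight $0$ with extension $\Theta_{+}$ of mass $(4\epsilon-1, 0)$, and Proposition~\ref{prop:eocad2gp} then gives the GPYBE. Your check that $\Theta_{+}$ meets the hypotheses of Theorem~\ref{thm:eo2ps} (balanced, $A$-invariant of mass $\kappa$ by scaling, and Eq.~\eqref{eq:eqvm} vacuous since $\mu = 0$) is accurate, but it is already covered by the proof of Proposition~\ref{prop:eocad2gp}.
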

\begin{proof}
	Write $r$ as $r = \Theta + \Lambda$, where $\Theta \in \mathrm{Sym}^2(A)$ and $\Lambda \in \mathrm{Alt}^2(A)$.
	Since $\Theta$ is $(\ad, L)$-invariant, Theorem~\ref{thm:epybe-o} implies that $\Lambda_{+}: A^* \rightarrow A$ is an extended $\mathcal{O}$-operator of weight $0$ with extension $\Theta_{+}$ of mass $(4\epsilon - 1, 0)$ on $(A, [\,,\,], \cdot)$ associated to $(A^*, -\ad_{[,]}^*, L_\cdot^*)$.
	Proposition~\ref{prop:eocad2gp} then shows that $r$ is a solution of the GPYBE in $(A, [\,,\,], \cdot)$.
\end{proof}

\begin{proposition}\label{prop:peqc}
	Let $(A, [\,,\,], \cdot)$ be a Poisson algebra and $r \in A \otimes A$.
	Writing $r$ as $r = \Theta + \Lambda$ with $\Theta \in \mathrm{Sym}^2(A)$ and $\Lambda \in \mathrm{Alt}^2(A)$.
	Suppose that $\Theta$ is $(\ad, L)$-invariant.
	Then the following conditions are equivalent.
	\begin{enumerate}[label=(\roman*)]
		\item\label{it:reqd1}
		   $r$ is a solution of the PYBE in $(A, [\,,\,], \cdot)$.

		\item\label{it:reqd2}
		   $\Lambda_{+}$ is an extended $\mathcal{O}$-operator of weight $0$ with extension $\Theta_{+}$ of mass $(-1, 0)$ associated to $(A^*, -\ad_{[,]}^*, L_\cdot^*)$.

		\item\label{it:reqd3}
		   $r_{+}$ (resp. $r_{-}$) is an $\mathcal{O}$-operator of weight $1$ on $(A, [\,,\,], \cdot)$ associated to $A$-module Poisson algebra $(A^*, \{\,,\,\}_{+}, \circ_{+}, -\ad_{[,]}^*, L_\cdot^*)$ (resp. $(A^*, \{\,,\,\}_{-}, \circ_{-}, -\ad_{[,]}^*, L_\cdot^*)$), where $\{\,,\,\}_{+}$ and $\circ_{+}$ (resp. $\{\,,\,\}_{-}$ and $\circ_{-}$) are defined by 
		   \begin{align}
		   	\{x^*, y^*\}_{+} & = 2 \ad_{[,]}^*(\Theta_{+}(x^*)) y^* \quad \text{(resp. $\{x^*, y^*\}_{-} = - 2 \ad_{[,]}^*(\Theta_{+}(x^*)) y^*$)}, \label{eq:dbd} \\
		   	x^* \circ_{+} y^* & = - 2 L_\cdot^*(\Theta_{+}(x^*)) y^* \quad\;\;\; \text{(resp. $x^* \circ_{-} y^* = 2 L_\cdot^*(\Theta_{+}(x^*)) y^*$)}. \label{eq:dmd}
		   \end{align}

		\item\label{it:reqd4}
		   $(A^*, [\,,\,]_r, \cdot_r)$ is a Poisson algebra, where $[\,,\,]_r$ and $\cdot_r$ are defined by Eqs.~\eqref{eq:pcbdr1} and \eqref{eq:pcbdr2} respectively, and $r_{+}$ (resp. $r_{-}$) is homomorphism from $(A^*, [\,,\,]_r, \cdot_r)$ to $(A, [\,,\,], \cdot)$.
	\end{enumerate}
\end{proposition}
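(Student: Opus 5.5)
The plan is to reduce everything to results already established, applied to the coadjoint module $(A^*, -\ad_{[,]}^*, L_\cdot^*)$ regarded as an $A$-module Poisson algebra with trivial Poisson structure, with $\lambda=0$, $T=\Lambda_{+}$ and $S=\Theta_{+}$. By Lemma~\ref{lem:sinvb}, the $(\ad,L)$-invariance of $\Theta$ makes $\Theta_{+}$ a balanced $A$-module homomorphism associated to $(A^*, -\ad_{[,]}^*, L_\cdot^*)$. Since the bracket and product on $A^*$ vanish, Eq.~\eqref{eq:eqvm} holds trivially with $\mu=\lambda=0$. Hence the hypotheses of Theorem~\ref{thm:opm}, and of its specialization Corollary~\ref{coro:bah2e}, are met. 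Recall also that $\Lambda_{+}$ and $\Theta_{+}$ are the symmetrizer and antisymmetrizer of $r_{\pm}$, so $r_{\pm}=\Lambda_{+}\pm\Theta_{+}$.

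For \ref{it:reqd1}$\Leftrightarrow$\ref{it:reqd2}, apply Theorem~\ref{thm:epybe-o} with $\kappa=-1$. Then the EPYBE has mass $\frac{\kappa+1}{4}=0$, and the EPYBE of mass $0$ is exactly $\mathbf{C}(r)=\mathbf{A}(r)=0$, i.e. the PYBE. This step is immediate.

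For \ref{it:reqd2}$\Leftrightarrow$\ref{it:reqd3}, use Corollary~\ref{coro:bah2e} with the module $(A^*,-\ad_{[,]}^*,L_\cdot^*)$. It gives that $\Lambda_{+}$ is an extended $\mathcal{O}$-operator of weight $0$ with extension $\Theta_{+}$ of mass $(-1,0)$ if and only if $\Lambda_{+}\pm\Theta_{+}=r_{\pm}$ is an $\mathcal{O}$-operator of weight $1$ associated to $(A^*,\{\,,\,\}_{\pm},\circ_{\pm},-\ad^*_{[,]},L^*_\cdot)$. Substituting $\rho=-\ad_{[,]}^*$ and $\zeta=L_\cdot^*$ into Corollary~\ref{coro:bah2e} yields
\begin{equation*}
\{x^*,y^*\}_{+}=-2\rho(\Theta_{+}(x^*))y^*=2\ad_{[,]}^*(\Theta_{+}(x^*))y^*,\qquad x^*\circ_{+}y^*=-2L_\cdot^*(\Theta_{+}(x^*))y^*,
\end{equation*}
which match Eqs.~\eqref{eq:dbd}--\eqref{eq:dmd}; the $-$ case is analogous. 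The one point needing care is the sign pairing: the $+$ (resp. $-$) case of Corollary~\ref{coro:bah2e}, mass $(-1,\lambda)$ (resp. $(-1,-\lambda)$), must correspond to $r_{+}$ (resp. $r_{-}$). With $\lambda=0$ both masses collapse to $(-1,0)$, so either choice of sign is equivalent to \ref{it:reqd2}.

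For \ref{it:reqd2}$\Leftrightarrow$\ref{it:reqd4}, the bracket and product from Eqs.~\eqref{eq:pcbdr1}--\eqref{eq:pcbdr2} must first be identified with $\{\,,\,\}_{\Lambda_{+}}$ and $\circ_{\Lambda_{+}}$ of Eqs.~\eqref{eq:soo2p1}--\eqref{eq:soo2p2} for $\lambda=0$. Exactly as in the proof of Proposition~\ref{prop:eocad2gp}, balancedness of $\Theta_{+}$ cancels the $\Theta_{+}$ contributions. This gives $[x^*,y^*]_r=\{x^*,y^*\}_{\Lambda_{+}}$ and $x^*\cdot_r y^*=x^*\circ_{\Lambda_{+}}y^*$. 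Then the equivalence (i)$\Leftrightarrow$(ii) of Corollary~\ref{coro:bah2e} (equivalently of Theorem~\ref{thm:opm}) gives that \ref{it:reqd2} holds if and only if $(A^*,[\,,\,]_r,\cdot_r)$ is a Poisson algebra and $r_{\pm}=\Lambda_{+}\pm\Theta_{+}$ is a homomorphism of Poisson algebras into $(A,[\,,\,],\cdot)$.

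I expect this identification of $[\,,\,]_r,\cdot_r$ with the $\Lambda_{+}$-induced operations to be the main obstacle, though it is only bookkeeping. One must track that $r_{-}=\Lambda_{+}-\Theta_{+}$ and use the balanced relations $\ad^*(\Theta_{+}(x^*))y^*=-\ad^*(\Theta_{+}(y^*))x^*$ and $L^*(\Theta_{+}(x^*))y^*=L^*(\Theta_{+}(y^*))x^*$ with the correct signs.
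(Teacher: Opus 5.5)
Your proposal is correct and follows the paper's proof essentially step for step. You get (i)$\Leftrightarrow$(ii) from Theorem~\ref{thm:epybe-o} with $\kappa=-1$, and (ii)$\Leftrightarrow$(iii)$\Leftrightarrow$(iv) from Lemma~\ref{lem:sinvb} together with Corollary~\ref{coro:bah2e}, after using balancedness of $\Theta_{+}$ to identify $[\,,\,]_r,\cdot_r$ with the operations induced by $\Lambda_{+}$; your notes on the sign pairing (both masses reduce to $(-1,0)$ when $\lambda=0$) and on Eq.~\eqref{eq:eqvm} holding trivially for $\mu=0$ just spell out what the paper leaves implicit.
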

\begin{proof}
	\ref{it:reqd1} $\Longleftrightarrow$ \ref{it:reqd2}.
	Note that $r$ is a solution of the PYBE in $(A, [\,,\,], \cdot)$ if and only if $r$ is a solution of the EPYBE of mass $0$ in $(A, [\,,\,], \cdot)$.
	Then by Theorem~\ref{thm:epybe-o}, we show that $r$ is a solution of the PYBE in $(A, [\,,\,], \cdot)$ if and only if $\Lambda_{+}$ is an extended $\mathcal{O}$-operator of weight $0$ with extension $\Theta_{+}$ of mass $(-1, 0)$ associated to $(A^*, -\ad_{[,]}^*, L_\cdot^*)$.

	\ref{it:reqd2} $\Longleftrightarrow$ \ref{it:reqd3} $\Longleftrightarrow$ \ref{it:reqd4}.
	By Lemma~\ref{lem:sinvb}, $\Theta_{+}: A^* \rightarrow A$ is a balanced $A$-module homomorphism associated to $(A^*, -\ad_{[,]}^*, L_\cdot^*)$.
	Moreover, for all $x^*, y^* \in A^*$, we have
	\begin{align*}
		-\ad_{[,]}^*(\Lambda_{+}(x^*))y^* + \ad_{[,]}^*(\Lambda_{+}(y^*))x^* &= -\ad_{[,]}^*(r_{+}(x^*)) y^* +\ad_{[,]}^*(r_{-}(y^*)) x^*, \\
		L_{\cdot}^*(\Lambda_{+}(x^*))y^* + L_{\cdot}^*(\Lambda_{+}(y^*))x^* &= L_{\cdot}^*(r_{+}(x^*)) y^* + L_{\cdot}^*(r_{-}(y^*)) x^*.
	\end{align*}
	The desired conclusion now follows immediately from Corollary~\ref{coro:bah2e}.
\end{proof}

\begin{remark}
	\begin{enumerate}[nosep, label=(\alph*)]
		\item The equivalence \ref{it:reqd1} $\Longleftrightarrow$ \ref{it:reqd4} was previously demonstrated in \cite[Theorem~2.15]{lin2026quasitriangular} in the study of quasi-triangular Poisson bialgebras.
		Proposition \ref{prop:peqc} offers a new proof of this equivalence by means of extended $\mathcal{O}$-operators.
		
		\item 
		In particular, if $r$ is skew-symmetric, i.e., $\Theta = 0$, then $r$ is a solution of the PYBE in $(A, [\,,\,], \cdot)$ if and only if $r_{+}$ is an $\mathcal{O}$-operator of weight 0 on $(A, [\,,\,], \cdot)$ associated to $(A^*, -\ad_{[,]}^*, L_\cdot^*)$, as shown in~\cite{lin2023differential, LBS2020}. 
		This recovers the known $\mathcal{O}$-operator characterization.
	\end{enumerate}

\end{remark}

\begin{corollary}
	Let $(A, [\,,\,], \cdot)$ be a Poisson algebra and $r \in A \otimes A$ be a solution of the PYBE in $(A, [\,,\,], \cdot)$ whose symmetric part $\Theta$ is $(\ad, L)$-invariant.
	Define the following binary operations for all $x^*, y^* \in A^*$:
	\begin{align*}
		\{x^*, y^*\} &= 2 \ad_{[,]}^*(\Theta_{+}(x^*)) y^*, \quad\quad
		x^* \star y^* = -\ad_{[,]}^*(r_{+}(x^*))y^*, \\
		x^* \circ y^* &= - 2 L_\cdot^*(\Theta_{+}(x^*)) y^*, \quad\quad\;\;
		x^* \succ y^* = L_\cdot^*(r_{+}(x^*))y^*.
	\end{align*}
	Then $(A^*, \{\,,\,\}, \star, \circ, \succ)$ is a post-Poisson algebra.
	In particular, if $r_{+}$ is invertible, then there exists a post-Poisson algebra structure defined on $A$, whose associated Poisson algebra is precisely $(A, [\,,\,], \cdot)$.
\end{corollary}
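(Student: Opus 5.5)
The plan is to combine Proposition~\ref{prop:peqc} with Proposition~\ref{prop:o2pp} and Theorem~\ref{thm:ippeq}. Since $r$ solves the PYBE and $\Theta$ is $(\ad, L)$-invariant, the implication \ref{it:reqd1} $\Longrightarrow$ \ref{it:reqd3} of Proposition~\ref{prop:peqc} shows that $r_{+}: A^* \to A$ is an $\mathcal{O}$-operator of weight $1$ on $(A, [\,,\,], \cdot)$. The associated $A$-module Poisson algebra is $(A^*, \{\,,\,\}_{+}, \circ_{+}, -\ad_{[,]}^*, L_\cdot^*)$, with $\{\,,\,\}_{+}$ and $\circ_{+}$ given by Eqs.~\eqref{eq:dbd} and \eqref{eq:dmd}. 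That this quintuple really is an $A$-module Poisson algebra comes from Corollary~\ref{coro:bah2e}, applied through Lemma~\ref{lem:sinvb}, which says $\Theta_{+}$ is a balanced $A$-module homomorphism associated to $(A^*, -\ad_{[,]}^*, L_\cdot^*)$.

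Next I apply Proposition~\ref{prop:o2pp} with $T = r_{+}$, $\lambda = 1$, $V = A^*$, $\rho = -\ad_{[,]}^*$, $\zeta = L_\cdot^*$, $[\,,\,]_V = \{\,,\,\}_{+}$ and $\cdot_V = \circ_{+}$. Formula~\eqref{eq:oo2pp} then produces exactly the operations in the statement:
\begin{align*}
x^* \star y^* &= -\ad_{[,]}^*(r_{+}(x^*))y^*, & x^*\succ y^* &= L_\cdot^*(r_{+}(x^*))y^*,\\
\{x^*,y^*\} &= \{x^*,y^*\}_{+} = 2\ad_{[,]}^*(\Theta_{+}(x^*))y^*, & x^*\circ y^* &= x^*\circ_{+}y^* = -2L_\cdot^*(\Theta_{+}(x^*))y^*.
\end{align*}
Hence $(A^*, \{\,,\,\}, \star, \circ, \succ)$ is a post-Poisson algebra.

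For the last claim, assume $r_{+}$ is invertible. Then $r_{+}$ is an invertible $\mathcal{O}$-operator of weight $1$ associated to an $A$-module Poisson algebra. The converse direction of Theorem~\ref{thm:ippeq} gives a post-Poisson structure on $A$, obtained by transporting the structure along $r_{+}$, and its associated Poisson algebra is $(A, [\,,\,], \cdot)$. Concretely, the transported operations are $a\star_{r_{+}} b = r_{+}(-\ad_{[,]}^*(a)r_{+}^{-1}(b))$ and similar formulas for the other three operations.

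The only point needing care is a bookkeeping check of signs. Eq.~\eqref{eq:dbd} carries a $+2$ in front of $\ad^*_{[,]}$. This matches the general formula $\{u,v\}_{+} = -2\rho(S(u))v$ from Corollary~\ref{coro:bah2e}, because $\rho = -\ad_{[,]}^*$ and $S = \Theta_{+}$. I do not expect any other obstacle, since every step is a direct invocation of an earlier result.
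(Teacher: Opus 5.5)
Your proposal is correct and follows the paper's own argument. You use the implication (i)$\Rightarrow$(iii) of Proposition~\ref{prop:peqc} to make $r_{+}$ a weight-$1$ $\mathcal{O}$-operator associated to $(A^*, \{\,,\,\}_{+}, \circ_{+}, -\ad_{[,]}^*, L_\cdot^*)$, then Proposition~\ref{prop:o2pp} with $\lambda=1$ for the post-Poisson structure and Theorem~\ref{thm:ippeq} for the invertible case (your sign check against Corollary~\ref{coro:bah2e} is a harmless extra verification).
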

\begin{proof}
	By Proposition~\ref{prop:peqc}, $r_{+}$ is an $\mathcal{O}$-operator of weight $1$ on $(A, [\,,\,], \cdot)$ associated to the $A$-module Poisson algebra $(A^*, \{\,,\,\}_{+}, \circ_{+}, -\ad_{[,]}^*, L_\cdot^*)$, where $\{\,,\,\}_{+}$ and $\circ_{+}$ are defined by Eqs.~\eqref{eq:dbd} and \eqref{eq:dmd}.
	The conclusions now follow from Proposition~\ref{prop:o2pp} and Theorem~\ref{thm:ippeq}.
\end{proof}

\subsection{Extended \texorpdfstring{$\mathcal{O}$}{O}-operators and EPYBE on quadratic Poisson algebras}\label{sec:quadratic}

\begin{definition}
	A \textbf{quadratic Poisson algebra} is a quadruple $(A, [\,,\,], \cdot, \mathcal{B})$, where $(A, [\,,\,], \cdot)$ is a Poisson algebra and $\mathcal{B}$ is a nondegenerate symmetric bilinear form on $A$ such that
	\begin{equation*}
		\mathcal{B}([a, b], c) = \mathcal{B}(a, [b, c]), \;\;
		\mathcal{B}(a \cdot b, c) = \mathcal{B}(a, b \cdot c), \;\;
		\forall a, b, c \in A.
	\end{equation*}
\end{definition}

\begin{definition}
	Let $(A, [\,,\,], \cdot)$ be a Poisson algebra and $\mathcal{B}$ be a nondegenerate symmetric bilinear form on $A$.
	A linear map $T: A \rightarrow A$ is called {\bf self-adjoint} (resp. {\bf skew-adjoint}) with respect to $\mathcal{B}$ if for all $a, b \in A$
	\begin{equation*}
		\mathcal{B}(T(a), b) = \mathcal{B}(a, T(b)) \quad \text{(resp. $\mathcal{B}(T(a), b) = -\mathcal{B}(a, T(b))$)}.
	\end{equation*}
\end{definition}

\begin{lemma}\label{lem:sa2s}
	Let $(A, [\,,\,], \cdot, \mathcal{B})$ be a quadratic Poisson algebra and $I_\mathcal{B}$ be the induced linear isomorphism by $\mathcal{B}$.
	Then $T: A \to A$ is self-adjoint (resp. skew-adjoint) with respect to $\mathcal{B}$ if and only if the 2-tensor form of $T I_\mathcal{B}: A^* \to A$ is symmetric (resp. skew-symmetric).
\end{lemma}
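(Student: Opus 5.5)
The argument is a direct unwinding of the definitions. First we fix conventions. The induced isomorphism $I_\mathcal{B}: A^* \to A$ is determined by
\[
\mathcal{B}(I_\mathcal{B}(x^*), b) = \langle x^*, b\rangle, \qquad \forall x^* \in A^*,\ b \in A.
\]
Equivalently, $x^* = \mathcal{B}(I_\mathcal{B}(x^*), \cdot\,)$. Symmetry of $\mathcal{B}$ gives $\langle x^*, b\rangle = \mathcal{B}(b, I_\mathcal{B}(x^*))$ as well.

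Let $r \in A \otimes A$ be the 2-tensor form of $\varphi := T I_\mathcal{B}$, so $r_{+} = \varphi$. By definition, for all $x^*, y^* \in A^*$,
\[
\langle r, x^* \otimes y^*\rangle = \langle r_{+}(x^*), y^*\rangle = \langle y^*, T I_\mathcal{B}(x^*)\rangle .
\]
Put $a = I_\mathcal{B}(x^*)$ and $b = I_\mathcal{B}(y^*)$. Since $I_\mathcal{B}$ is bijective, $a$ and $b$ range over all of $A$. The defining identity of $I_\mathcal{B}$ then gives
\[
\langle r, x^* \otimes y^*\rangle = \mathcal{B}(b, T(a)) = \mathcal{B}(T(a), b).
\]
The same computation gives $\langle \tau(r), x^* \otimes y^*\rangle = \langle r, y^* \otimes x^*\rangle = \mathcal{B}(T(b), a) = \mathcal{B}(a, T(b))$.

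Now $r$ is symmetric (resp. skew-symmetric) exactly when $\langle r, x^*\otimes y^*\rangle = \pm\langle r, y^*\otimes x^*\rangle$ for all $x^*, y^*$. By the two formulas above, this says $\mathcal{B}(T(a), b) = \pm \mathcal{B}(a, T(b))$ for all $a, b \in A$, which is precisely self-adjointness (resp. skew-adjointness) of $T$ with respect to $\mathcal{B}$.

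An equivalent check uses the relation $-r_{-} = r_{+}^*$ noted before Lemma~\ref{lem:sinvb}: $r$ is symmetric iff $r_{+} = r_{+}^*$, and skew-symmetric iff $r_{+} = -r_{+}^*$.

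The only care needed is consistency of conventions, namely which slot of $\mathcal{B}$ defines $I_\mathcal{B}$ and the sign relation between $r_{+}$ and $r_{-}$. Because $\mathcal{B}$ is symmetric, either choice of slot gives the same $I_\mathcal{B}$, so the argument is unaffected.
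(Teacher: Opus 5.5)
Your proposal is correct and is essentially the paper's proof. Both arguments unwind $\langle r, x^*\otimes y^*\rangle$ and $\langle \tau(r), x^*\otimes y^*\rangle$ through $I_\mathcal{B}$ and the symmetry of $\mathcal{B}$; the paper writes out only the self-adjoint case via $\langle r-\tau(r), x^*\otimes y^*\rangle$, while you handle both signs at once.
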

\begin{proof}
	It is enough to prove the self-adjoint case, since the skew-adjoint case is analogous.
	Let $r$ be the 2-tensor form of $T I_{\mathcal{B}}$, i.e., $r_{+} = T I_{\mathcal{B}}$.
	Then, for all $x^*, y^* \in A^*$, we have
	\begin{align*}
		& \langle r - \tau(r), x^* \otimes y^* \rangle = \langle y^*, r_{+}(x^*) \rangle - \langle x^*, r_{+}(y^*) \rangle = \mathcal{B}(I_{\mathcal{B}}(y^*), r_{+}(x^*)) - \mathcal{B}(I_{\mathcal{B}}(x^*), r_{+}(y^*)) \\
		=\; & \mathcal{B}(I_{\mathcal{B}}(y^*), T I_{\mathcal{B}}(x^*)) - \mathcal{B}(I_{\mathcal{B}}(x^*), T I_{\mathcal{B}}(y^*)) = \mathcal{B}(I_{\mathcal{B}}(y^*), T I_{\mathcal{B}}(x^*)) - \mathcal{B}(T I_{\mathcal{B}}(y^*), I_{\mathcal{B}}(x^*)).
	\end{align*}
	Hence, $r$ is symmetric if and only if $T$ is self-adjoint.
\end{proof}

Let $A$ be a vector space and $\mathcal{B}$ be a nondegenerate bilinear form on $A$.
Denote by $I_\mathcal{B}: A^* \rightarrow A$ the induced linear isomorphism defined by
\begin{equation*}
	\langle I_\mathcal{B}^{-1}(a), b\rangle := \mathcal{B}(a, b), \quad \forall a,b \in A. 
\end{equation*}

\begin{lemma}\label{lem:eq4q}
	Let $(A, [\,,\,], \cdot, \mathcal{B})$ be a quadratic Poisson algebra and $I_\mathcal{B}$ be the induced linear isomorphism by $\mathcal{B}$.
	Suppose that $S: A \rightarrow A$ is a linear map that is self-adjoint with respect to $\mathcal{B}$.
	Then the following conditions are equivalent.
	\begin{enumerate}[label=(\roman*)]
		\item\label{it:eq41}
		   $S$ is balanced associated to $(A, \ad_{[,]}, L_\cdot)$.

		\item\label{it:eq42}
		   $S$ is $A$-invariant of mass 1 associated to $(A, \ad_{[,]}, L_\cdot)$.

		\item\label{it:eq43}
		   $P_S = S I_\mathcal{B}: A^* \rightarrow A$ is balanced associated to $(A^*, -\ad_{[,]}^*, L_\cdot^*)$.

		\item\label{it:eq44}
		   $P_S = S I_\mathcal{B}: A^* \rightarrow A$ is $A$-invariant of mass $1$ associated to $(A^*, -\ad_{[,]}^*, L_\cdot^*)$.
	\end{enumerate}
\end{lemma}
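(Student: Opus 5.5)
We split the statement into two blocks: first \ref{it:eq41} $\Leftrightarrow$ \ref{it:eq42}, then \ref{it:eq41} $\Leftrightarrow$ \ref{it:eq43} and \ref{it:eq42} $\Leftrightarrow$ \ref{it:eq44}. Throughout we use only two facts: invariance of $\mathcal{B}$ together with self-adjointness of $S$, and the identity $\mathcal{B}(I_\mathcal{B}(x^*), b) = \langle x^*, b\rangle$ for all $x^* \in A^*$, $b \in A$, which follows from the definition of $I_\mathcal{B}$ and the symmetry of $\mathcal{B}$.

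\emph{Step 1: \ref{it:eq41} $\Leftrightarrow$ \ref{it:eq42}.}
We pair each identity with an arbitrary $c \in A$ and use nondegeneracy. For the Lie part, balancedness reads $[S(a), b] = -[S(b), a]$, i.e. $[S(a), b] = [a, S(b)]$. Invariance reads $[a, S(b)] = S([a, b])$.
\begin{itemize}
\item Assume invariance. Then
\begin{equation*}
\mathcal{B}([S(a), b], c) = \mathcal{B}(S(a), [b, c]) = \mathcal{B}(a, S([b, c])) = \mathcal{B}(a, [b, S(c)]) = \mathcal{B}([a, b], S(c)) = \mathcal{B}(S([a, b]), c).
\end{equation*}
Hence $[S(a), b] = S([a, b]) = [a, S(b)]$, which is balancedness.
\item Assume balancedness. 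Then
\begin{equation*}
\mathcal{B}(S([a, b]), c) = \mathcal{B}([a, b], S(c)) = \mathcal{B}(a, [b, S(c)]) = \mathcal{B}(a, [S(b), c]) = \mathcal{B}([a, S(b)], c),
\end{equation*}
which is invariance.
\end{itemize}
The associative part is the same argument with $\cdot$ in place of $[\,,\,]$. Here commutativity gives $S(a) \cdot b = S(b) \cdot a$ if and only if $a \cdot S(b) = S(a \cdot b)$, by the analogous chain through $\mathcal{B}(a \cdot b, S(c))$.

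\emph{Step 2: \ref{it:eq41} $\Leftrightarrow$ \ref{it:eq43} and \ref{it:eq42} $\Leftrightarrow$ \ref{it:eq44}.}
We transport each condition through $I_\mathcal{B}$. The key intermediate claim is that $I_\mathcal{B}$ is an $A$-module isomorphism from $(A^*, -\ad_{[,]}^*, L_\cdot^*)$ to $(A, \ad_{[,]}, L_\cdot)$, i.e.
\begin{equation*}
I_\mathcal{B}(-\ad_{[,]}^*(a)x^*) = [a, I_\mathcal{B}(x^*)], \qquad I_\mathcal{B}(L_\cdot^*(a)x^*) = a \cdot I_\mathcal{B}(x^*).
\end{equation*}
To check the first, we pair with $b$:
\begin{equation*}
\langle -\ad_{[,]}^*(a)x^*, b\rangle = -\langle x^*, [a, b]\rangle = -\mathcal{B}(I_\mathcal{B}(x^*), [a, b]) = \mathcal{B}([a, I_\mathcal{B}(x^*)], b),
\end{equation*}
using invariance and antisymmetry. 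The second is checked the same way. With this claim, the conditions for $P_S = S I_\mathcal{B}$ become the conditions for $S$ after substituting $a = I_\mathcal{B}(x^*)$, $b = I_\mathcal{B}(y^*)$:
\begin{itemize}
\item Balancedness of $P_S$ is $-\ad^*(S(a))\,I_\mathcal{B}^{-1}(b) = \ad^*(S(b))\,I_\mathcal{B}^{-1}(a)$. Applying $I_\mathcal{B}$, this becomes $[S(a), b] = -[S(b), a]$, exactly \ref{it:eq41}. The $\zeta$ part is analogous.
\item $A$-invariance of $P_S$ becomes $[a, S(b)] = S([a, b])$ and $a \cdot S(b) = S(a \cdot b)$, exactly \ref{it:eq42}.
\end{itemize}
This gives \ref{it:eq41} $\Leftrightarrow$ \ref{it:eq43} and \ref{it:eq42} $\Leftrightarrow$ \ref{it:eq44}. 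Self-adjointness of $S$ is not needed in this step.

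\emph{Alternative route.} One could instead obtain \ref{it:eq43} $\Leftrightarrow$ \ref{it:eq44} via Lemma~\ref{lem:sinvb}. By Lemma~\ref{lem:sa2s}, the 2-tensor form of $P_S$ is symmetric, say $\Theta$ with $\Theta_{+} = P_S$. Lemma~\ref{lem:sinvb} then says directly that balancedness and mass-1 invariance of $P_S$ are equivalent.

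The main obstacle is sign bookkeeping: the sign convention in $-\ad^*$ and the pairing order in the definition of $I_\mathcal{B}$. This matters most in the module-isomorphism claim of Step 2, so we check it carefully before using it.
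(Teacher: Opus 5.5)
Your proof is correct. Step 1 matches the paper's argument for (i)$\Leftrightarrow$(ii). The paper records a single identity, $\mathcal{B}(a, S([b,c])-[b,S(c)]) = \mathcal{B}(-[S(a),c]-[S(c),a], b)$, together with its associative analogue, and this gives both implications at once. You run two separate chains instead, but the ingredients are identical: invariance of $\mathcal{B}$, self-adjointness of $S$, and nondegeneracy.

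Step 2 takes a genuinely different route. The paper proves (i)$\Leftrightarrow$(iii) and (i)$\Leftrightarrow$(iv) by explicit pairings, and its (i)$\Leftrightarrow$(iv) computation uses self-adjointness of $S$ to move $P_S$ across $\mathcal{B}$. You instead first show that $I_\mathcal{B}$ is a module isomorphism $(A^*,-\ad_{[,]}^*,L_\cdot^*)\to(A,\ad_{[,]},L_\cdot)$. This is exactly the fact the paper later imports from \cite{lin2026quasitriangular} in the proof of Proposition~\ref{prop:ajo2s}. With it, the conditions on $P_S=SI_\mathcal{B}$ become literally the conditions on $S$ under the substitution $a=I_\mathcal{B}(x^*)$, and you pair (iv) with (ii) rather than with (i).

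Your approach costs one small verification up front but removes most of the sign bookkeeping. It also shows that (i)$\Leftrightarrow$(iii) and (ii)$\Leftrightarrow$(iv) hold for every linear $S$, with self-adjointness needed only for (i)$\Leftrightarrow$(ii). The paper's version is more computational, but it is self-contained and never has to isolate the module-isomorphism statement.

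Your alternative route to (iii)$\Leftrightarrow$(iv) through Lemmas~\ref{lem:sa2s} and~\ref{lem:sinvb} is also valid. However, it leans on a result the paper only cites, so the direct transport argument is the better primary proof.
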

\begin{proof}
	Let $x^*, y^* \in A^*$ and $a, b, c \in A$.
	Then the following identities
	\begin{align*}
		 & \mathcal{B}(a, S([b, c]) - [b, S(c)]) = \mathcal{B}([c, S(a)] - [S(c), a], b) = \mathcal{B}(-[S(a), c] - [S(c), a], b),          \\
		 & \mathcal{B}(a, S(b \cdot c) - b \cdot S(c)) = \mathcal{B}(c \cdot S(a) - S(c) \cdot a, b) = \mathcal{B}(S(a) \cdot c - S(c) \cdot a, b),
	\end{align*}
	imply the equivalence between (\ref{it:eq41}) and (\ref{it:eq42}).
	Similarly, the following computation
	\begin{align*}
		& \langle -\ad_{[,]}^*(P_S(x^*))y^* -\ad_{[,]}^*(P_S(y^*))x^*, a\rangle = -\langle y^*, [P_S(x^*), a] \rangle -\langle x^*, [P_S(y^*), a] \rangle                                   \\
		& =-\mathcal{B}(I_\mathcal{B}(y^*), [P_S(x^*), a]) - \mathcal{B}(I_\mathcal{B}(x^*), [P_S(y^*), a]) = -\mathcal{B}([I_\mathcal{B}(y^*), SI_\mathcal{B}(x^*)] + [I_\mathcal{B}(x^*), SI_\mathcal{B}(y^*)], a),      \\
		& \langle L_\cdot^*(P_S(x^*))y^* - L_\cdot^*(P_S(y^*))x^*, a\rangle = \langle y^*, P_S(x^*) \cdot a \rangle - \langle x^*, P_S(y^*) \cdot a \rangle                                   \\
		& = \mathcal{B}(I_\mathcal{B}(y^*), P_S(x^*) \cdot a) - \mathcal{B}(I_\mathcal{B}(x^*), P_S(y^*) \cdot a) = \mathcal{B}(I_\mathcal{B}(y^*) \cdot SI_\mathcal{B}(x^*) - I_\mathcal{B}(x^*) \cdot SI_\mathcal{B}(y^*), a),
	\end{align*}
	establishes the equivalence between (\ref{it:eq41}) and (\ref{it:eq43}).
	Finally, identities
	\begin{align*}
		 & \langle y^*, P_S(-\ad_{[,]}^*(a)x^*) - [a, P_S(x^*)]\rangle = \mathcal{B}(P_S(-\ad_{[,]}^*(a)x^*), I_\mathcal{B}(y^*)) - \mathcal{B}(I_\mathcal{B}(y^*), [a, P_S(x^*)])           \\
		 & = \mathcal{B}(I_\mathcal{B}(-\ad_{[,]}^*(a)x^*), P_S(y^*)) - \mathcal{B}(a, [P_S(x^*), I_\mathcal{B}(y^*)])                                         \\
		 & = \langle (-\ad_{[,]}^*(a)x^*, P_S(y^*) \rangle - \mathcal{B}(a, [P_S(x^*), I_\mathcal{B}(y^*)]) = -\langle x^*, [a, P_S(y^*)]\rangle - \mathcal{B}(a, [P_S(x^*), I_\mathcal{B}(y^*)])   \\
		 & = -\mathcal{B}(a, [SI_\mathcal{B}(y^*), I_\mathcal{B}(x^*)]) - \mathcal{B}(a, [SI_\mathcal{B}(x^*), I_\mathcal{B}(y^*)]),                                  \\
		 & \langle y^*, P_S(L_\cdot^*(a)x^*) - a \cdot P_S(x^*)\rangle = \mathcal{B}(P_S(L_\cdot^*(a)x^*), I_\mathcal{B}(y^*)) - \mathcal{B}(I_\mathcal{B}(y^*), a \cdot P_S(x^*))           \\
		 & = \mathcal{B}(I_\mathcal{B}(L_\cdot^*(a)x^*), P_S(y^*)) - \mathcal{B}(a, P_S(x^*) \cdot I_\mathcal{B}(y^*))                                         \\
		 & = \langle (L_\cdot^*(a)x^*, P_S(y^*) \rangle - \mathcal{B}(a, P_S(x^*) \cdot I_\mathcal{B}(y^*)) = \langle x^*, a \cdot P_S(y^*)\rangle - \mathcal{B}(a, P_S(x^*) \cdot I_\mathcal{B}(y^*)) \\
		 & = \mathcal{B}(a, SI_\mathcal{B}(y^*) \cdot I_\mathcal{B}(x^*)) - \mathcal{B}(a, SI_\mathcal{B}(x^*) \cdot I_\mathcal{B}(y^*)),
	\end{align*}
	show the equivalence between (\ref{it:eq41}) and (\ref{it:eq44}).
	The proof is complete.
\end{proof}

\begin{proposition}\label{prop:ajo2s}
	Let $(A, [\,,\,], \cdot, \mathcal{B})$ be a quadratic Poisson algebra, $I_\mathcal{B}$ be the induced linear isomorphism by $\mathcal{B}$, and $S, T: A \rightarrow A$ be linear maps.
	Then $T$ is an extended $\mathcal{O}$-operator of weight $0$ with extension $S$ of mass $(\kappa, 0)$ on $(A, [\,,\,], \cdot)$ associated to $(A, \ad_{[,]}, L_\cdot)$ if and only if $P_T=T I_\mathcal{B}: A^* \rightarrow A$ is an extended $\mathcal{O}$-operator of weight $0$ with extension $P_S = S I_\mathcal{B}: P^* \rightarrow P$ of mass $(\kappa, 0)$ on $(A, [\,,\,], \cdot)$ associated to $(A^*, -\ad_{[,]}^*, L_\cdot^*)$.	
\end{proposition}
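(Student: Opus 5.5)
The plan is to show that $I_\mathcal{B}: A^* \to A$ is an isomorphism of $A$-modules from $(A^*, -\ad_{[,]}^*, L_\cdot^*)$ to $(A, \ad_{[,]}, L_\cdot)$. Once this is known, both defining identities \eqref{eq:ext-o-lie} and \eqref{eq:ext-o-assoc} for $P_T$ (with extension $P_S$, weight $0$, mass $(\kappa,0)$) become exactly the corresponding identities for $T$ (with extension $S$) evaluated at $a = I_\mathcal{B}(x^*)$, $b = I_\mathcal{B}(y^*)$. Since $I_\mathcal{B}$ is bijective, the equivalence follows.

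\emph{Step 1: $I_\mathcal{B}$ is a module isomorphism.} We need
\[
I_\mathcal{B}(-\ad_{[,]}^*(a)x^*) = [a, I_\mathcal{B}(x^*)], \qquad I_\mathcal{B}(L_\cdot^*(a)x^*) = a \cdot I_\mathcal{B}(x^*).
\]
Write $x^* = I_\mathcal{B}^{-1}(c)$ and pair with an arbitrary $b\in A$. Then
\[
\langle -\ad_{[,]}^*(a)x^*, b\rangle = -\mathcal{B}(c,[a,b]).
\]
By invariance and symmetry of $\mathcal{B}$ together with skew-symmetry of the bracket, this equals $\mathcal{B}([a,c],b) = \langle I_\mathcal{B}^{-1}([a,c]), b\rangle$. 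In the same way,
\[
\langle L_\cdot^*(a)x^*, b\rangle = \mathcal{B}(c, a\cdot b) = \mathcal{B}(a\cdot c, b),
\]
using invariance and commutativity. This step is the only real computation, and the main risk is sign and ordering bookkeeping in the invariance identity $\mathcal{B}([a,b],c) = \mathcal{B}(a,[b,c])$.

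\emph{Step 2: transfer of the identities.} For $x^*, y^* \in A^*$ set $a = I_\mathcal{B}(x^*)$ and $b = I_\mathcal{B}(y^*)$. By Step 1,
\[
-\ad_{[,]}^*(P_T(x^*))y^* + \ad_{[,]}^*(P_T(y^*))x^* = I_\mathcal{B}^{-1}\big([T(a), b] + [a, T(b)]\big),
\]
so the left-hand side of \eqref{eq:ext-o-lie} for $P_T$ equals
\[
[T(a), T(b)] - T\big([T(a), b] + [a, T(b)]\big),
\]
while the right-hand side equals $\kappa [S(a), S(b)]$. This is exactly \eqref{eq:ext-o-lie} for $T$ with respect to the adjoint module. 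The commutative part is identical: $L_\cdot^*(P_T(x^*))y^* + L_\cdot^*(P_T(y^*))x^*$ is sent by $I_\mathcal{B}$ to $T(a)\cdot b + a \cdot T(b)$. Since $I_\mathcal{B}$ is bijective, as $(x^*,y^*)$ ranges over $A^*\times A^*$ the pair $(a,b)$ ranges over $A \times A$, which gives both directions of the equivalence.

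No self-adjointness or balance hypotheses are needed, because the mass has $\mu = 0$ and the weight is $0$, so $[\,,\,]_V$ and $\cdot_V$ do not enter.
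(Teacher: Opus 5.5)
Your proposal is correct and follows essentially the same route as the paper. The paper also uses that $I_\mathcal{B}$ intertwines $(A^*,-\ad_{[,]}^*,L_\cdot^*)$ with $(A,\ad_{[,]},L_\cdot)$, and then rewrites both defining identities for $P_T$ as those for $T$ evaluated at $I_\mathcal{B}(x^*)$ and $I_\mathcal{B}(y^*)$.

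The only difference is in Step 1. The paper cites \cite[Theorem~2.19]{lin2026quasitriangular} for the intertwining property, whereas you verify it directly from invariance, symmetry and (skew-)commutativity, and your sign bookkeeping there is right.
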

\begin{proof}
	By \cite[Theorem~2.19]{lin2026quasitriangular}, we have
	\begin{equation*}
		I_\mathfrak{B}^{-1} (\ad_{[,]}(a) b) = -\ad_{[,]}^*(a) I_\mathfrak{B}^{-1}(b), \;\;
		I_\mathfrak{B}^{-1} (L_\cdot(a) b) = L_\cdot^*(a) I_\mathfrak{B}^{-1} (b), \;\;
		\forall a, b \in A.
	\end{equation*}
	Therefore,
	\begin{align*}
		 & [P_T(x^*), P_T(y^*)] - P_T\big(-\ad_{[,]}^*(P_T(x^*))y^* + \ad_{[,]}^*(P_T(y^*))x^* \big) - \kappa [P_S(x^*), P_S(y^*)]                                                 \\
		 & =[TI_\mathcal{B}(x^*), TI_\mathcal{B}(y^*)] - TI_\mathcal{B}\big(-\ad_{[,]}^*(TI_\mathcal{B}(x^*))y^* + \ad_{[,]}^*(TI_\mathcal{B}(y^*))x^* \big) - \kappa [SI_\mathcal{B}(x^*), SI_\mathcal{B}(y^*)]          \\
		 & =[TI_\mathcal{B}(x^*), TI_\mathcal{B}(y^*)] - T\big(\ad_{[,]}(TI_\mathcal{B}(x^*))I_\mathcal{B}(y^*) - \ad_{[,]}(TI_\mathcal{B}(y^*))I_\mathcal{B}(x^*) \big) - \kappa [SI_\mathcal{B}(x^*), SI_\mathcal{B}(y^*)], \\
		 & P_T(x^*) \cdot P_T(y^*) - P_T\big(L_\cdot^*(P_T(x^*))t^* + L_\cdot^*(P_T(y^*))x^* \big) - \kappa P_S(x^*) \cdot P_S(y^*)                                                \\
		 & =TI_\mathcal{B}(x^*) \cdot TI_\mathcal{B}(y^*) - TI_\mathcal{B}\big(L_\cdot^*(TI_\mathcal{B}(x^*))y^* + L_\cdot^*(TI_\mathcal{B}(y^*))x^* \big) - \kappa SI_\mathcal{B}(x^*) \cdot SI_\mathcal{B}(y^*)             \\
		 & =TI_\mathcal{B}(x^*) \cdot TI_\mathcal{B}(y^*) - T\big(L_\cdot(TI_\mathcal{B}(x^*))I_\mathcal{B}(y^*) + L_\cdot(TI_\mathcal{B}(y^*))I_\mathcal{B}(x^*) \big) - \kappa SI_\mathcal{B}(x^*) \cdot SI_\mathcal{B}(y^*).
	\end{align*}
	Hence, $T$ is an extended $\mathcal{O}$-operator of weight $0$ with extension $S$ of mass $(\kappa, 0)$ on $(A, [\,,\,], \cdot)$ associated to $(A, \ad_{[,]}, L_\cdot)$ if and only if $P_T=T I_\mathcal{B}: A^* \rightarrow A$ is an extended $\mathcal{O}$-operator of weight $0$ with extension $P_S = S I_\mathcal{B}: P^* \rightarrow P$ of mass $(\kappa, 0)$ on $(A, [\,,\,], \cdot)$ associated to $(A^*, -\ad_{[,]}^*, L_\cdot^*)$.
\end{proof}

\begin{corollary}\label{coro:exoqs}
	Let $(A, [\,,\,], \cdot, \mathcal{B})$ be a quadratic Poisson algebra, $I_\mathcal{B}$ be the induced linear isomorphism by $\mathcal{B}$, and $S, T: A \rightarrow A$ be linear maps.
	Suppose that $S$ is balanced associated to $(A, \ad_{[,]}, L_\cdot)$ and self-adjoint with respect to $\mathcal{B}$, and $T$ is skew-adjoint with respect to $\mathcal{B}$. 
	Then the 2-tensor form of $r_{\pm} = T I_\mathcal{B} \pm S I_\mathcal{B}$ is a solution of the EPYBE of mass $\frac{\kappa+1}{4}$ in $(A, [\,,\,], \cdot)$ if and only if $T$ is an extended $\mathcal{O}$-operator of weight $0$ with extension $S$ of mass $(\kappa, 0)$ on $(A, [\,,\,], \cdot)$ associated to $(A, \ad_{[,]}, L_\cdot)$.
	In particular, the following statements hold.
	\begin{enumerate}[label=(\roman*)]
		\item
		The 2-tensor form of $r_{\pm} = T I_\mathcal{B} \pm S I_\mathcal{B}$ is a solution of the PYBE in $(A, [\,,\,], \cdot)$ if and only if $T$ is an extended $\mathcal{O}$-operator of weight $0$ with extension $S$ of mass $(-1, 0)$ on $(A, [\,,\,], \cdot)$ associated to $(A, \ad_{[,]}, L_\cdot)$.
		
		\item
		The 2-tensor form of $T I_\mathcal{B}$ is a solution of the PYBE in $(A, [\,,\,], \cdot)$ if and only if $T$ is a Rota-Baxter operator of weight $0$ on $(A, [\,,\,], \cdot)$.
	\end{enumerate}
\end{corollary}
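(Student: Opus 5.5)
The plan is to reduce the statement to Theorem~\ref{thm:epybe-o}, using Lemmas~\ref{lem:sa2s}, \ref{lem:eq4q} and \ref{sinvb} and Proposition~\ref{prop:ajo2s}.

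First I identify the symmetric and skew-symmetric parts. Let $r$ be the 2-tensor form of $TI_\mathcal{B}+SI_\mathcal{B}$, so $r_{+}=TI_\mathcal{B}+SI_\mathcal{B}$; the case with the minus sign is analogous and is addressed below. By Lemma~\ref{lem:sa2s}, the 2-tensor form of $TI_\mathcal{B}$ is skew-symmetric, since $T$ is skew-adjoint, and the 2-tensor form of $SI_\mathcal{B}$ is symmetric, since $S$ is self-adjoint. Hence in the decomposition $r=\Theta+\Lambda$ we have $\Theta_{+}=SI_\mathcal{B}=P_S$ and $\Lambda_{+}=TI_\mathcal{B}=P_T$.

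Second, I verify the invariance hypothesis of Theorem~\ref{thm:epybe-o}. Since $S$ is balanced and self-adjoint, Lemma~\ref{lem:eq4q} shows that $P_S$ is balanced associated to $(A^*,-\ad_{[,]}^*,L_\cdot^*)$. Lemma~\ref{lem:sinvb} then gives that $\Theta$ is $(\ad,L)$-invariant.

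Third, I chain the equivalences. By Theorem~\ref{thm:epybe-o}, $r$ solves the EPYBE of mass $\frac{\kappa+1}{4}$ if and only if $P_T$ is an extended $\mathcal{O}$-operator of weight $0$ with extension $P_S$ of mass $(\kappa,0)$ associated to $(A^*,-\ad_{[,]}^*,L_\cdot^*)$. By Proposition~\ref{prop:ajo2s}, this holds if and only if $T$ is an extended $\mathcal{O}$-operator of weight $0$ with extension $S$ of mass $(\kappa,0)$ associated to $(A,\ad_{[,]},L_\cdot)$.

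For the minus sign, $TI_\mathcal{B}-SI_\mathcal{B}$ has 2-tensor form $\Lambda-\Theta$, whose symmetric part is $-\Theta$. Here I apply the same argument with $S$ replaced by $-S$, which is still balanced and self-adjoint. The extension-$S$ conditions are unchanged because $\kappa[S(u),S(v)]$ and $\kappa S(u)\cdot S(v)$ are quadratic in $S$ and the $\mu$-term vanishes.

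The special cases follow directly. Item (i) is the case $\kappa=-1$, where the mass is $0$ and the EPYBE reduces to the PYBE. Item (ii) takes $S=0$, which is trivially balanced and self-adjoint, together with $\kappa=-1$; an extended $\mathcal{O}$-operator with zero extension is an $\mathcal{O}$-operator of weight $0$ associated to the adjoint module, that is, a Rota-Baxter operator of weight $0$.

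The main obstacle is the bookkeeping in the first step. I need to check that the paper's conventions for $r_{+}$, $r_{-}$ and $\tau$ really give $\Theta_{+}=P_S$ and $\Lambda_{+}=P_T$ with the correct signs, and that the factor-of-two normalization in $\Theta=\frac{r+\tau(r)}{2}$ is consistent with this. I would verify this by pairing against $x^*\otimes y^*$ as in the proof of Lemma~\ref{lem:sa2s}.
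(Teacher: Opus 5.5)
Your proposal is correct and follows the paper's proof step for step. Lemma~\ref{lem:sa2s} identifies the symmetric and skew-symmetric parts, Lemmas~\ref{lem:eq4q} and~\ref{lem:sinvb} give the $(\ad,L)$-invariance, and Theorem~\ref{thm:epybe-o} followed by Proposition~\ref{prop:ajo2s} gives the equivalence, with the special cases $\kappa=-1$ and $S=0$. Your explicit treatment of the minus sign, replacing $S$ by $-S$ and using that the mass-$(\kappa,0)$ conditions are quadratic in $S$, spells out a step the paper leaves implicit in its ``$\pm$''.
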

\begin{proof}
	By Lemma~\ref{lem:sa2s}, the 2-tensor form of $T I_\mathcal{B}$ is skew-symmetric, while that of $S I_\mathcal{B}$ is symmetric.
	Since $S$ is balanced associated to $(A, \ad_{[,]}, L_\cdot)$, it follows from Lemma~\ref{lem:eq4q} and \ref{lem:sinvb} that the 2-tensor form of $S I_\mathcal{B}$ is $(\ad, L)$-invariant.
	Then, by Theorem~\ref{thm:epybe-o}, the 2-tensor form of $r_{\pm} = T I_\mathcal{B} \pm S I_\mathcal{B}$ is a solution of the EPYBE of mass $\frac{\kappa+1}{4}$ in $(A, [\,,\,], \cdot)$ if and only if $T I_\mathcal{B}: A^* \rightarrow A$ is an extended $\mathcal{O}$-operator of weight $0$ with extension $S I_\mathcal{B}$ of mass $(\kappa, 0)$ on $(A, [\,,\,], \cdot)$ associated to $(A^*, -\ad_{[,]}^*, L_\cdot^*)$.
	Therefore, Proposition~\ref{prop:ajo2s} implies that the 2-tensor form of $r_{\pm}$ is a solution of the EPYBE of mass $\frac{\kappa+1}{4}$ in $(A, [\,,\,], \cdot)$ if and only if $T$ is an extended $\mathcal{O}$-operator of weight $0$ with extension $S$ of mass $(\kappa, 0)$ on $(A, [\,,\,], \cdot)$ associated to $(A, \ad_{[,]}, L_\cdot)$.
	Finally, the particular cases are obtained by taking $\kappa = -1$ and $S = 0$ respectively.
\end{proof}

\begin{corollary}
	Let $(A, [\,,\,], \cdot, \mathcal{B})$ be a quadratic Poisson algebra, $I_\mathcal{B}: A^* \rightarrow A$ be the induced linear isomorphism by $\mathcal{B}$ and $r \in A \otimes A$.
	Writing $r$ as $r = \Theta + \Lambda$ with $\Theta \in \mathrm{Sym}^2(A)$ and $\Lambda \in \mathrm{Alt}^2(A)$.
	Suppose that $\Theta$ is $(\ad, L)$-invariant.
	Then $r$ is a solution of the EPYBE of mass $\frac{\kappa+1}{4}$ if and only if $\Lambda_{+}I_\mathcal{B}^{-1}: A \rightarrow A$ is an extended $\mathcal{O}$-operator of weight $0$ with extension $\Theta_{+}I_\mathcal{B}^{-1}: A \rightarrow A$ of mass $(\kappa, 0)$ on $(A, [\,,\,], \cdot)$ associated to $(A, \ad_{[,]}, L_\cdot)$.
	In particular, the following statements hold.
	\begin{enumerate}[label=(\roman*)]
		\item
		   $r$ is a solution of the PYBE in $(A, [\,,\,], \cdot)$ if and only if $\Lambda_{+}I_\mathcal{B}^{-1}: A \rightarrow A$ is an extended $\mathcal{O}$-operator of weight $0$ with extension $\Theta_{+}I_\mathcal{B}^{-1}: A \rightarrow A$ of mass $(-1, 0)$ on $(A, [\,,\,], \cdot)$ associated to $(A, \ad_{[,]}, L_\cdot)$.

		\item
		   $\Lambda$ is a solution of the PYBE in $(A, [\,,\,], \cdot)$ if and only if $\Lambda_{+}I_\mathcal{B}^{-1}: A \rightarrow A$ is a Rota-Baxter operator of weight 0 on $(A, [\,,\,], \cdot)$.
	\end{enumerate}
\end{corollary}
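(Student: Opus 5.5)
The plan is to reduce the statement to Corollary~\ref{coro:exoqs} by transporting $r$ back along $I_\mathcal{B}$. Put $T := \Lambda_{+}I_\mathcal{B}^{-1}$ and $S := \Theta_{+}I_\mathcal{B}^{-1}$, both endomorphisms of $A$. Then $TI_\mathcal{B} = \Lambda_{+}$ and $SI_\mathcal{B} = \Theta_{+}$. Hence $r_{+} = \Lambda_{+} + \Theta_{+} = TI_\mathcal{B} + SI_\mathcal{B}$, so $r$ is precisely the 2-tensor form of $TI_\mathcal{B} + SI_\mathcal{B}$. It therefore suffices to check the hypotheses of Corollary~\ref{coro:exoqs} for this pair $(T,S)$.

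Next I verify the adjointness conditions with Lemma~\ref{lem:sa2s}. The 2-tensor form of $TI_\mathcal{B} = \Lambda_{+}$ is $\Lambda$, which is skew-symmetric, so $T$ is skew-adjoint with respect to $\mathcal{B}$. The 2-tensor form of $SI_\mathcal{B} = \Theta_{+}$ is $\Theta$, which is symmetric, so $S$ is self-adjoint.

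The remaining hypothesis is that $S$ is balanced associated to $(A, \ad_{[,]}, L_\cdot)$. Since $\Theta$ is $(\ad, L)$-invariant, Lemma~\ref{lem:sinvb} shows that $\Theta_{+} = SI_\mathcal{B} = P_S$ is balanced associated to $(A^*, -\ad_{[,]}^*, L_\cdot^*)$. Because $S$ is self-adjoint, Lemma~\ref{lem:eq4q} (implication (iii)$\Rightarrow$(i)) then gives that $S$ is balanced associated to $(A, \ad_{[,]}, L_\cdot)$.

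With all hypotheses in place, Corollary~\ref{coro:exoqs} yields the main equivalence: $r$ solves the EPYBE of mass $\frac{\kappa+1}{4}$ if and only if $T$ is an extended $\mathcal{O}$-operator of weight $0$ with extension $S$ of mass $(\kappa,0)$. Alternatively, one can combine Theorem~\ref{thm:epybe-o} with Proposition~\ref{prop:ajo2s} applied to $T$ and $S$.

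For the particular cases:
\begin{enumerate}[label=(\roman*)]
	\item Take $\kappa = -1$. The EPYBE of mass $0$ is exactly the PYBE.
	\item Apply the main statement to $r = \Lambda$, i.e. $\Theta = 0$, which is trivially invariant. Then $S = 0$, and an extended $\mathcal{O}$-operator with zero extension is an ordinary $\mathcal{O}$-operator. For the adjoint module, that is a Rota-Baxter operator of weight $0$.
\end{enumerate}

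The only step needing care is the one just flagged: the direction of Lemma~\ref{lem:eq4q} used, and making sure the convention $\Theta_{+}$ (rather than $\Theta_{-}=-\Theta_{+}$) matches the one in Lemma~\ref{lem:sinvb}. Everything else is bookkeeping.
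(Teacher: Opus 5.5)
Your proposal is correct and follows essentially the same route as the paper: set $T=\Lambda_{+}I_\mathcal{B}^{-1}$ and $S=\Theta_{+}I_\mathcal{B}^{-1}$, check that $T$ is skew-adjoint and $S$ is self-adjoint, use Lemmas~\ref{lem:sinvb} and \ref{lem:eq4q} to show $S$ is balanced, and then invoke Corollary~\ref{coro:exoqs}. The only cosmetic difference is that you obtain the adjointness properties from Lemma~\ref{lem:sa2s}, whereas the paper does a direct one-line computation using $\Lambda+\tau(\Lambda)=0$.
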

\begin{proof}
	For all $a, b \in A$, we have
	\begin{align*}
		 & \mathcal{B}(\Lambda_{+}I_\mathcal{B}^{-1}(a), b) + \mathcal{B}(\Lambda_{+}I_\mathcal{B}^{-1}(b), a) = \langle \Lambda_{+}I_\mathcal{B}^{-1}(a), I_\mathcal{B}^{-1}(b) \rangle + \langle \Lambda_{+}I_\mathcal{B}^{-1}(b), I_\mathcal{B}^{-1}(a) \rangle \\
		 & =\langle \Lambda + \tau(\Lambda), I_\mathcal{B}^{-1}(a) \otimes I_\mathcal{B}^{-1}(b)\rangle = 0.
	\end{align*}
	Thus, $\Lambda_{+}I_\mathcal{B}^{-1}$ is skew-adjoint with respect to $\mathcal{B}$.
	Similarly, $\Theta_{+}I_\mathcal{B}^{-1}$ is self-adjoint with respect to $\mathcal{B}$.
	Moreover, since $\Theta$ is $(\ad, L)$-invariant, it follows from Lemma~\ref{lem:sinvb} and \ref{lem:eq4q} that $\Theta_{+}I_\mathcal{B}^{-1}:A \rightarrow A$ is balanced associated to $(A, \ad_{[,]}, L_\cdot)$.
	The desired conclusions now follow from Corollary~\ref{coro:exoqs}.
\end{proof}

\subsection{Extended \texorpdfstring{$\mathcal{O}$}{O}-operators and EPYBE on semi-direct product Poisson algebras}
\label{sec:extension}

\begin{lemma}\label{lem:b2bsm}
	Let $(A, [\,,\,], \cdot)$ be a Poisson algebra, $(V, \rho, \zeta)$ a module of $(A, [\,,\,], \cdot)$, and $(\mathfrak{A}=A \ltimes_{-\rho^*, \zeta^*} V^*, \courant{\,,\,}, \bullet)$ be the semi-direct product of $(A, [\,,\,], \cdot)$ and $(V^*, -\rho^*, \zeta^*)$.
	Let $S: V \rightarrow A$ be a linear map.
	Then $\mathcal{S} := r^{S}_{+} - r^{S}_{-}: \mathfrak{A}^* \rightarrow \mathfrak{A}$ is a balanced $\mathfrak{A}$-module homomorphism associated to $(\mathfrak{A}^*, -\ad_{\courant{,}}^*, L_\bullet^*)$ if and only if $S$ is a balanced $A$-module homomorphism associated to $(V, \rho, \zeta)$.
\end{lemma}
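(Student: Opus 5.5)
First I make $\mathcal{S}$ explicit. Here $\mathfrak{A}^* = A^* \oplus V$, and $r^S = \sum_i S(e_i)\otimes e^i$ gives $r^S_{+}(x^*+u) = S^*(x^*)$ and $r^S_{-}(x^*+u) = -S(u)$. Hence
\begin{equation*}
	\mathcal{S}(x^*+u) = S(u) + S^*(x^*), \;\; \forall x^* \in A^*, \; u \in V.
\end{equation*}
This is the $2$-tensor form of the symmetric tensor $r^S + \tau(r^S)$, up to the convention $\Theta_{+}$. By Lemma~\ref{lem:sinvb}, applied to the Poisson algebra $\mathfrak{A}$, $\mathcal{S}$ is balanced for $(\mathfrak{A}^*, -\ad_{\courant{,}}^*, L_\bullet^*)$ iff it is $\mathfrak{A}$-invariant of mass $1$, iff $r^S+\tau(r^S)$ is $(\ad,L)$-invariant in $\mathfrak{A}$. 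So "balanced $\mathfrak{A}$-module homomorphism" reduces to checking one of these conditions. I will check the $A$-invariance form directly, since it is linear in $\mathcal{S}$.

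Next I write the coadjoint action of $\mathfrak{A}$ on $\mathfrak{A}^* = A^*\oplus V$ in components. For $a+\xi^* \in \mathfrak{A}$ and $x^*+u \in \mathfrak{A}^*$, pairing against $b+\eta^*$ gives
\begin{align*}
	-\ad_{\courant{,}}^*(a+\xi^*)(x^*+u) &= -\ad^*(a)x^* + \pi_A(\ldots) + \rho(a)u,
\end{align*}
where the $A^*$-component contains a term coming from $\xi^*$ and $u$, namely $b\mapsto -\langle \xi^*, \rho(b)u\rangle$. The operator $L_\bullet^*$ decomposes similarly, with $\zeta(a)u$ and $b\mapsto\langle\xi^*,\zeta(b)u\rangle$. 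I then impose the invariance identities
\begin{align*}
	\courant{a+\xi^*, \mathcal{S}(x^*+u)} &= \mathcal{S}\big(-\ad_{\courant{,}}^*(a+\xi^*)(x^*+u)\big),\\
	(a+\xi^*)\bullet\mathcal{S}(x^*+u) &= \mathcal{S}\big(L_\bullet^*(a+\xi^*)(x^*+u)\big)
\end{align*}
and split them by the four input types $(a,u)$, $(a,x^*)$, $(\xi^*,u)$, $(\xi^*,x^*)$ and by output components in $A$ and $V^*$.

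Matching components should give the following. The $(a,u)$ part in $A$ is $[a,S(u)] = S(\rho(a)u)$ and $a\cdot S(u)=S(\zeta(a)u)$, i.e.\ $S$ is an $A$-module homomorphism. The $(a,x^*)$ part in $V^*$ is the dual statement $S^*$ intertwines $-\ad^*$ with $-\rho^*$, which is equivalent to the same condition. The $(\xi^*,u)$ part compares $-\rho^*(S(u))\xi^*$ with $S^*$ of the functional $b\mapsto-\langle\xi^*,\rho(b)u\rangle$. Pairing with $v$ gives $-\langle \xi^*,\rho(S(u))v\rangle$ against $-\langle\xi^*,\rho(S(v))u\rangle$ with a sign, which is exactly the balanced condition~\eqref{eq:bal}; the $\zeta$ part gives the other half. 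Finally, $(\xi^*,x^*)$ contributes trivially, because $V^*\cdot V^*=0$ and $\courant{V^*,V^*}=0$. Thus the invariance of $\mathcal{S}$ is equivalent to $S$ being a module homomorphism and balanced, which gives both directions at once. Alternatively, one can verify balancedness of $\mathcal{S}$ and observe that it yields the same list, consistent with Lemma~\ref{lem:sinvb}.

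The main obstacle is the sign bookkeeping in the cross term $(\xi^*,u)$, from the dual of the $\rho$-part of $\courant{\xi^*, b}$ and the $-\rho^*$ convention. A slip there would turn "balanced" into "anti-balanced". I would fix conventions by testing on a basis pairing $\langle\cdot, b+\eta^*\rangle$ before matching terms.
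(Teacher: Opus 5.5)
Your argument is correct and is essentially the paper's: both identify $\mathcal{S}(x^*+u)=S(u)+S^*(x^*)$ with $(r^S+\tau(r^S))_{+}$, use Lemma~\ref{lem:sinvb} on $\mathfrak{A}$ to reduce to a single condition, and split into components. The only difference is that the paper expands the balanced condition for $\mathcal{S}$ while you expand the invariance condition, and both give the same list ($S$ is an $A$-module homomorphism and balanced).

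One sign in your $(\xi^*,u)$ term needs correcting. In fact $\courant{\xi^*, S(u)}=+\rho^*(S(u))\xi^*$, so pairing with $v$ gives $\langle \xi^*,\rho(S(u))v\rangle=-\langle \xi^*,\rho(S(v))u\rangle$, which is the balanced condition. With the sign you wrote, it would come out as $\rho(S(u))v=\rho(S(v))u$.
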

\begin{proof}
	Note that $r^S+(\tau(r^S))$ is symmetric, by Lemma~\ref{lem:sinvb},
	$r^{S}_{+} - r^{S}_{-} = (r^S+ \tau(r^S) )_{+}: \mathfrak{A}^* \rightarrow \mathfrak{A}$ is a balanced $\mathfrak{A}$-module homomorphism associated to $(\mathfrak{A}^*, -\ad_{\courant{,}}^*, L_\bullet^*)$ if and only if it is balanced associated to $(\mathfrak{A}^*, -\ad_{\courant{,}}^*, L_\bullet^*)$.
	It then suffices to prove that $\mathcal{S}$ is balanced associated to $(\mathfrak{A}^*, -\ad_{\courant{,}}^*, L_\bullet^*)$ if and only if $S$ is a balanced $A$-module homomorphism associated to $(V, \rho, \zeta)$.
	For all $x^* \in A^*$ and $v \in V$, we have
	\begin{equation*}
		(r^{S}_{+} - r^{S}_{-})(x^* + u) = S(u) + S^*(x^*).
	\end{equation*}
	Let $x^*, y^* \in A^*$, $a \in A$, $\xi^* \in V^*$ and $u, v \in V$, we have
	\begin{align*}
		 & \langle \ad_{\courant{,}}^*((r^{S}_{+} - r^{S}_{-})(x^* + u)) (y^* + v) + \ad_{\courant{,}}^*((r^{S}_{+} - r^{S}_{-})(y^* + u)) (x^* + v), a + \xi^*\rangle                 \\
		 & = \langle \ad_{\courant{,}}^*(S(u) + S^*(x^*)) (y^* + v) + \ad_{\courant{,}}^*(S(v) + S^*(y^*)) (x^* + u), a + \xi^* \rangle                                \\
		 & = \langle y^* + v, \courant{S(u) + S^*(x^*), a + w^*}\rangle + \langle x^* + u, \courant{S(v) + S^*(y^*), a + \xi^*}\rangle                                 \\
		 & = \langle y^*, [S(u), a]\rangle + \langle x^*, [S(v), a]\rangle - \langle v, \rho^*(S(u))\xi^* \!-\! \rho^*(a)S^*(x^*)\rangle - \langle u, \rho^*(S(v))\xi^* \!-\! \rho^*(a)S^*(y^*)\rangle \\
		 & = \langle y^*, [S(u), a] + S(\rho(a)u) \rangle + \langle x^*, [S(v), a] + S(\rho(a)v)\rangle - \langle \xi^*, \rho(S(u))v + \rho(S(v))u \rangle,
	\end{align*}
	and similarly
	\begin{align*}
		 & \langle -L_{\bullet}^*((r^{S}_{+} - r^{S}_{-})(x^* + u)) (y^* + v) + L_{\bullet}^*((r^{S}_{+} - r^{S}_{-})(y^* + u)) (x^* + v), a + \xi^*\rangle \\
		 & = \langle -L_{\bullet}^*(S(u) + S^*(x^*)) (y^* + v) + L_{\bullet}^*(S(v) + S^*(y^*)) (x^* + u), a + \xi^* \rangle                \\
		 & = -\langle y^* + v, (S(u) + S^*(x^*)) \bullet (a + \xi^*)\rangle + \langle x^* + u, (S(v) + S^*(y^*)) \bullet (a + \xi^*) \rangle        \\
		 & = -\langle y^*, S(u) \cdot a\rangle + \langle x^*, S(v) \cdot a \rangle - \langle v,
		\zeta^*(S(u))\xi^* \!+\! \zeta^*(a)S^*(x^*)\rangle + \langle u, \zeta^*(S(v))\xi^* \!+\! \zeta^*(a)S^*(y^*)\rangle                 \\
		 & = \langle y^*, -S(u) \cdot a + S(
		\zeta(a)u) \rangle + \langle x^*, S(v) \cdot a - S(\zeta(a)v)\rangle + \langle \xi^*, -\zeta(S(u))v + \zeta(S(v))u \rangle.
	\end{align*}
	Therefore, $r^{S}_{+} - r^{S}_{-}: \mathfrak{A}^* \rightarrow \mathfrak{A}$ is balanced associated to $(\mathfrak{A}^*, -\ad_{\courant{,}}^*, L_\bullet^*)$ if and only if $S$ is a balanced $A$-module homomorphism associated to $(V, \rho, \zeta)$.
	The proof is complete.
\end{proof}

\begin{theorem}\label{thm:eo2smp}
	Let $(A, [\,,\,], \cdot)$ be a Poisson algebra, $(V, \rho, \zeta)$ be a module of $(A, [\,,\,], \cdot)$, and $(\mathfrak{A}=A \ltimes_{-\rho^*, \zeta^*} V^*, \courant{\,,\,}, \bullet)$ be the semi-direct product of $(A, [\,,\,], \cdot)$ and $(V^*, -\rho^*, \zeta^*)$.
	Let $S, T: V \rightarrow A$ be linear maps.
	Suppose that $S$ is a balanced $A$-module homomorphism associated to $(V, \rho, \zeta)$.
	Then  the following conditions are equivalent.
		\begin{enumerate}[label=(\roman*)]
		 \item\label{SDJ1} $(r^T-\tau(r^T)) \pm (r^S + \tau(r^S))$ is a solution of the EPYBE of mass $\frac{\kappa+1}{4}$ in $(A \ltimes_{-\rho^*, \zeta^*} V^*, \courant{\,,\,}, \bullet)$. 
		\item\label{SDJ2} $\mathcal{T} := r_{+}^{T} + r_{-}^{T}: \mathfrak{A}^* \rightarrow \mathfrak{A}$ is an extended $\mathcal{O}$-operator of weight 0 with extension $\mathcal{S} := r_{+}^{S} - r_{-}^{S}$ of mass $(\kappa, 0)$ on $(\mathfrak{A}, \courant{\,,\,}, \bullet)$ associated $(\mathfrak{A}^*, -\ad_{\courant{\,,\,}}^*, L_\bullet^*)$.
				\item\label{SDJ3}  $T: V \rightarrow A$ is an extended $\mathcal{O}$-operator of weight 0 with extension $S$ of mass $(\kappa, 0)$ on $(A, [\,,\,], \cdot)$ associated to $(V, \rho, \zeta)$.
	 \end{enumerate}
\end{theorem}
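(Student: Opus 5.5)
The plan is to prove (i)$\Leftrightarrow$(ii) by specializing Theorem~\ref{thm:epybe-o} to $\mathfrak{A}$, and then (ii)$\Leftrightarrow$(iii) by a direct comparison of components. Set $r := (r^T-\tau(r^T)) \pm (r^S+\tau(r^S)) \in \mathfrak{A}\otimes\mathfrak{A}$. Its skew-symmetric part is $\Lambda = r^T-\tau(r^T)$ and its symmetric part is $\Theta = \pm(r^S+\tau(r^S))$. Since $\tau(r^T)_{+} = -r^T_{-}$, we get
\begin{equation*}
\Lambda_{+} = r^T_{+} + r^T_{-} = \mathcal{T}, \qquad \Theta_{+} = \pm(r^S_{+} - r^S_{-}) = \pm\mathcal{S}.
\end{equation*}
By Lemma~\ref{lem:b2bsm}, $\mathcal{S}$ is a balanced $\mathfrak{A}$-module homomorphism associated to $(\mathfrak{A}^*, -\ad_{\courant{,}}^*, L_\bullet^*)$, and so is $\pm\mathcal{S}$. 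Lemma~\ref{lem:sinvb}, applied to $\mathfrak{A}$, then shows that $\Theta$ is $(\ad,L)$-invariant. So Theorem~\ref{thm:epybe-o} applies: $r$ solves the EPYBE of mass $\frac{\kappa+1}{4}$ if and only if $\mathcal{T}$ is an extended $\mathcal{O}$-operator of weight $0$ with extension $\pm\mathcal{S}$ of mass $(\kappa,0)$. The extension enters only through $\kappa[\mathcal{S}(\cdot),\mathcal{S}(\cdot)]$ and $\kappa\,\mathcal{S}(\cdot)\bullet\mathcal{S}(\cdot)$, which are quadratic in $\mathcal{S}$, so the sign is irrelevant. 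This gives (i)$\Leftrightarrow$(ii).

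For (ii)$\Leftrightarrow$(iii), I will write out the maps explicitly. For $x^*\in A^*$ and $u\in V$,
\begin{equation*}
\mathcal{T}(x^*+u) = T^*(x^*) - T(u), \qquad \mathcal{S}(x^*+u) = S^*(x^*) + S(u),
\end{equation*}
with $T^*, S^*: A^*\to V^*$. I then expand the two defining identities, \eqref{eq:ext-o-lie} and \eqref{eq:ext-o-assoc} with $\lambda=\mu=0$, for arguments $X = x^*+u$ and $Y = y^*+v$, and sort the result into its $A$- and $V^*$-components.

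\begin{itemize}
\item The $A$-component depends only on $u,v$. Since $V^*$ is an abelian ideal of $\mathfrak{A}$, it reduces to $[T(u),T(v)] - T(\rho(T(u))v - \rho(T(v))u) - \kappa[S(u),S(v)]$, and similarly for the product. This uses the coadjoint action of $A\subset\mathfrak{A}$ restricted to $V$, namely $\rho$ and $\zeta$. So the vanishing of this component is exactly (iii).
\item The $V^*$-component is bilinear and mixed in $(x^*, v)$ and $(u, y^*)$; the $(x^*,y^*)$ part vanishes, since $V^*\bullet V^*=0$ and $\courant{V^*,V^*}=0$. Pairing it with some $w\in V$ produces a trilinear expression in $x^*,v,w$.
\end{itemize}

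The main step is to show that this $V^*$-component vanishes automatically once the $A$-component does. I expect it to be $\langle x^*, \mathcal{L}\rangle$-type terms such as $\langle x^*, \mathcal{L}_T(v,w) - \kappa[S(v),S(w)]\rangle$ and $\langle x^*, \mathcal{A}_T(v,w) - \kappa S(v)\cdot S(w)\rangle$. This mirrors the dualization computation in the proof of Theorem~\ref{thm:gl2gp}, where the components of $\mathbf{C}(\hat r)$ were all expressed through $\mathcal{L}_T$. Concretely, I would compute
\begin{equation*}
\courant{T^*(x^*), -T(v)} = -\rho^*(T(v))T^*(x^*)\cdot(-1)
\end{equation*}
and the corresponding terms, pair with $w$, and move the dual maps across the pairing. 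The $\kappa$-terms $\courant{S^*(x^*),S(v)}$ will be rewritten using the balance and $A$-invariance of $S$ to land in the form $\langle x^*,[S(v),S(w)]\rangle$.

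The hardest part will be the signs and the placement of $\rho^*$ versus $\ad^*$ in these mixed terms. If some residual term does not match directly, my fallback is to use the fact that $\mathcal{T}$ is the $+$ map of a skew tensor: its defining expressions are then antisymmetric under the pairing, so the $V^*$-component is the transpose of the $A$-component. I would try this transpose viewpoint first, since it should avoid most of the term-by-term bookkeeping.
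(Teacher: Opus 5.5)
Your proposal is correct and follows essentially the same route as the paper. For (i)$\Leftrightarrow$(ii) you combine Lemmas~\ref{lem:b2bsm} and \ref{lem:sinvb} with Theorem~\ref{thm:epybe-o}; your remark that the sign in $\Theta_{+}=\pm\mathcal{S}$ is harmless because $\mu=0$ is a detail the paper leaves implicit. For (ii)$\Leftrightarrow$(iii) you pair with $z^*+w$, so that the $V^*$-part becomes $\langle y^*,\cdot\rangle$ and $\langle x^*,\cdot\rangle$ terms built from the $A$-part evaluated at $(u,w)$ and $(v,w)$. The $\kappa$-terms convert via $A$-invariance alone, $S(\rho(S(u))w)=[S(u),S(w)]$ and $S(\zeta(S(u))w)=S(u)\cdot S(w)$. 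One small sign slip to fix when you carry it out: $\courant{T^*(x^*),-T(v)}=\rho^*(-T(v))T^*(x^*)=-\rho^*(T(v))T^*(x^*)$, not its negative.
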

\begin{proof}
	\ref{SDJ1} $\Longleftrightarrow$ \ref{SDJ2}. 
	By Lemmas~\ref{lem:sinvb} and \ref{lem:b2bsm}, $r^S + \tau(r^S)$ is $(\ad, L)$-invariant.
	Applying Theorem~\ref{thm:epybe-o}, we get that the 2-tensor $(r^T-\tau(r^T)) \pm (r^S + \tau(r^S))$, with the skew-symmetric part $r^T-\tau(r^T)$ and symmetric part $r^S + \tau(r^S)$, 
	is a solution of the EPYBE of mass $\frac{\kappa+1}{4}$ in $(\mathfrak{A}, \courant{\,,\,}, \bullet)$ if and only if $\mathcal{T}: \mathfrak{A}^* \rightarrow \mathfrak{A}$ is an extended $\mathcal{O}$-operator of weight 0 with extension $\mathcal{S}$ of mass $(\kappa, 0)$ on $(\mathfrak{A}, \courant{\,,\,}, \bullet)$ associated $(\mathfrak{A}^*, -\ad_{\courant{\,,\,}}^*, L_\bullet^*)$.

	\ref{SDJ2} $\Longleftrightarrow$ \ref{SDJ3}. 
	For all $x^* \in A^*$ and $u \in V$, we have
	\begin{equation*}
		\mathcal{T}(x^* + u) = - T(u) + T^*(x^*).
	\end{equation*}
	For all $x^*, y^*, z^* \in A^*$ and $u, v, w \in V$, we have
	\begin{align*}
		 & \langle \courant{\mathcal{T}(x^* + u), \mathcal{T}(y^* + v)} - \kappa \courant{\mathcal{S}(x^*+u), \mathcal{S}(y^*+v)}, z^*+ w\rangle               \\
		 & \quad - \langle \mathcal{T}\big( -\ad_{\courant{\,,\,}}^*(\mathcal{T}(x^* + u))(y^*+v) + \ad_{\courant{\,,\,}}^*(\mathcal{T}(y^* + v))(x^*+u)\big), z^*+ w\rangle \\
		 & =\langle \courant{-T(u) + T^*(x^*), - T(v) + T^*(y^*) } - \kappa \courant{S(u) + S^*(x^*), S(v) + S^*(y^*)}, z^*+ w\rangle                      \\
		 & \quad - \langle \mathcal{T}\big( -\ad_{\courant{\,,\,}}^*(-T(u) + T^*(x^*))(y^*+v) + \ad_{\courant{\,,\,}}^*(- T(v) + T^*(y^*))(x^*+u)\big), z^*+ w\rangle      \\
		 & =\langle z^*, [T(u), T(v)] - \kappa [S(u), S(v)] \rangle                                                     \\
		 & \quad + \langle \rho^*(T(u))T^*(y^*) - \rho^*(T(v))T^*(x^*) + \kappa \rho^*(S(u))S^*(y^*) - \kappa \rho^*(S(v))S^*(x^*), w\rangle                 \\
		 & \quad +\langle -\ad_{\courant{\,,\,}}^*(- T(u) + T^*(x^*) )(y^*+v) + \ad_{\courant{\,,\,}}^*(- T(v) + T^*(y^*) )(x^*+u), \mathcal{T}(z^* + w)\rangle          \\
		 & =\langle z^*, [T(u), T(v)] - \kappa [S(u), S(v)]\rangle                                                      \\
		 & \quad + \langle y^*, T(\rho(T(u))w) + \kappa S(\rho(S(u))w)\rangle - \langle x^*, T(\rho(T(v))w) + \kappa S(\rho(S(v))w) \rangle                 \\
		 & \quad - \langle y^* + v, \courant{ - T(u) + T^*(x^*), - T(w) + T^*(z^*) } \rangle + \langle x^* + u, \courant{- T(v) + T^*(y^*), - T(w) + T^*(z^*) } \rangle          \\
		 & =\langle z^*, [T(u), T(v)] - \kappa [S(u), S(v)]\rangle                                                      \\
		 & \quad + \langle y^*, T(\rho(T(u))w) + \kappa S(\rho(S(u))w)\rangle - \langle x^*, T(\rho(T(v))w) + \kappa S(\rho(S(v))w) \rangle                 \\
		 & \quad - \langle y^* + v, [T(u), T(w)] + \rho^*(T(u))(T^*(z^*)) - \rho^*(T(w))(T^*(x^*)) \rangle                                  \\
		 & \quad + \langle x^* + u, [T(v), T(w)] + \rho^*(T(v))(T^*(z^*)) - \rho^*(T(w))(T^*(y^*)) \rangle                                  \\
		 & =\langle z^*, [T(u), T(v)] - T(\rho(T(u))v) + T(\rho(T(v))u) - \kappa [S(u), S(v)]\rangle                                     \\
		 & \quad + \langle y^*, T(\rho(T(u))w) - T(\rho(T(w))u) - [T(u), T(w)] + \kappa S(\rho(S(u))w) \rangle                                \\
		 & \quad - \langle x^*, T(\rho(T(v))w) - T(\rho(T(w))v) -[T(v), T(w)] + \kappa S(\rho(S(v))w) \rangle                                \\
		 & =\langle z^*, [T(u), T(v)] - T(\rho(T(u))v) + T(\rho(T(v))u) - \kappa [S(u), S(v)] \rangle \\
		 & \quad + \langle y^*, - [T(u), T(w)] + T(\rho(T(u))w) - T(\rho(T(w))u) + \kappa [S(u), S(w)] \rangle \\
		 & \quad - \langle x^*, -[T(v), T(w)] + T(\rho(T(v))w) - T(\rho(T(w))v) + \kappa [S(u), S(w)] \rangle,
	\end{align*}
	and similarly
	\begin{align*}
		 & \langle \mathcal{T}(x^* + u) \bullet \mathcal{T}(y^* + v) - \kappa \mathcal{S}(x^*+u) \bullet \mathcal{S}(y^*+v), z^*+ w\rangle     \\
		 & \quad - \langle \mathcal{T}\big( L_\bullet^*(\mathcal{T}(x^* + u))(y^*+v) + L_\bullet^*(\mathcal{T}(y^* + v))(x^*+u)\big), z^*+ w\rangle \\
		 & =\langle z^*, T(u) \cdot T(v) - T(\zeta(T(u))v) - T(\zeta(T(v))u) - \kappa S(u) \cdot S(v) \rangle                    \\
		 & \quad + \langle y^*, - T(u) \cdot T(w) + T(\zeta(T(u))w) + T(\zeta(T(w))u) + \kappa S(u) \cdot S(w) \rangle               \\
		 & \quad - \langle x^*, - T(v) \cdot T(w) + T(\zeta(T(v))w) + T(\zeta(T(w))v) + \kappa S(u) \cdot S(w) \rangle.
	\end{align*}
	Therefore, $\mathcal{T}$ is an extended $\mathcal{O}$-operator of weight 0 with extension $\mathcal{S}$ of mass $(\kappa, 0)$ on $(\mathfrak{A}, \courant{\,,\,}, \bullet)$ associated $(\mathfrak{A}^*, -\ad_{\courant{\,,\,}}^*, L_\bullet^*)$ if and only if $T: V \rightarrow A$ is an extended $\mathcal{O}$-operator of weight 0 with extension $S$ of mass $(\kappa, 0)$ on $(A, [\,,\,], \cdot)$ associated to $(V, \rho, \zeta)$.
\end{proof}

\begin{corollary}\label{cor:tsc} 
	 With the conditions in Theorem \ref{thm:eo2smp}, the following statements hold.
	\begin{enumerate}[label=(\roman*)]
		\item\label{it:tcts1}
		   $T$ is an extended $\mathcal{O}$-operator of weight 0 with extension $S$ of mass $(-1, 0)$ on $(A, [\,,\,], \cdot)$ associated to $(V, \rho, \zeta)$ if and only if $(r^T-\tau(r^T)) \pm (r^S + \tau(r^S))$ is a solution of the PYBE in $(A \ltimes_{-\rho^*, \zeta^*} V^*, \courant{\,,\,}, \bullet)$.

		\item
		   $T$ is an $\mathcal{O}$-operator of weight 0 associated to $(V, \rho, \zeta)$ if and only if $r^T-\tau(r^T)$ is a skew-symmetric solution of the PYBE in $(A \ltimes_{-\rho^*, \zeta^*} V^*, \courant{\,,\,}, \bullet)$.
	\end{enumerate}
\end{corollary}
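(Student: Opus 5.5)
Both statements are specializations of Theorem~\ref{thm:eo2smp}, so the plan is to read off the equivalence \ref{SDJ1}$\Longleftrightarrow$\ref{SDJ3} at particular values of $\kappa$ and $S$, and to identify the EPYBE in those cases with the PYBE.

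For \ref{it:tcts1}, take $\kappa=-1$ in Theorem~\ref{thm:eo2smp}. The mass of the EPYBE is then $\frac{\kappa+1}{4}=0$. By Definition~\ref{def:epybe}, the EPYBE of mass $0$ reads $\mathbf{C}(r)=0$ and $\mathbf{A}(r)=0$, which is exactly the PYBE. The equivalence \ref{SDJ1}$\Longleftrightarrow$\ref{SDJ3} of Theorem~\ref{thm:eo2smp} then gives the claim directly. The hypothesis that $S$ is a balanced $A$-module homomorphism is inherited from ``the conditions in Theorem~\ref{thm:eo2smp}''.

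For the second statement, take $S=0$. The zero map is trivially balanced, since both sides of Eq.~\eqref{eq:bal} vanish. It is also an $A$-module homomorphism, since both sides of Eq.~\eqref{eq:ainv} vanish for every $\kappa$. So the hypotheses of Theorem~\ref{thm:eo2smp} hold. With $S=0$, an extended $\mathcal{O}$-operator of weight $0$ with extension $0$ of any mass $(\kappa,0)$ is just an $\mathcal{O}$-operator of weight $0$, because the right-hand sides of Eqs.~\eqref{eq:ext-o-lie}--\eqref{eq:ext-o-assoc} vanish. Also $r^S+\tau(r^S)=0$, so the tensor in \ref{SDJ1} is $r^T-\tau(r^T)$, which is skew-symmetric.

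It remains to match the EPYBE with the PYBE in this case. Either choose $\kappa=-1$ again and apply part \ref{it:tcts1} with $S=0$, or observe that for skew-symmetric $r$ we have $r_{13}+r_{31}=0$, so both sides of the EPYBE reduce to the PYBE for every mass (the Remark after Definition~\ref{def:epybe}). Either route gives that $T$ is an $\mathcal{O}$-operator of weight $0$ if and only if $r^T-\tau(r^T)$ is a skew-symmetric solution of the PYBE in the semi-direct product.

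There is no real obstacle here. The only points to check are that the zero extension satisfies the standing hypothesis of Theorem~\ref{thm:eo2smp}, and that mass $0$, or skew-symmetry, collapses the EPYBE to the PYBE; both are immediate from the definitions.
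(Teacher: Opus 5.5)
Your proposal is correct and follows the paper's own argument. The paper simply specializes Theorem~\ref{thm:eo2smp} to $\kappa=-1$, where the EPYBE of mass $\frac{\kappa+1}{4}=0$ is the PYBE, and to $S=0$. Your added checks (that the zero map satisfies the standing hypothesis, and that skew-symmetry or mass $0$ reduces the EPYBE to the PYBE) are the details the paper leaves implicit.
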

\begin{proof}
	It follows form Theorem \ref{thm:eo2smp} by taking $\kappa = -1$ or $S = 0$.
\end{proof}

\begin{corollary}\label{cor:tscx}
	Let $(A, [\,,\,], \cdot)$ be a Poisson algebra and $(\mathfrak{A} = A \ltimes_{-\ad_{[,]}^*, L_\cdot^*} A^*, \courant{\,,\,}, \bullet)$ be the semi-direct product of $(A, [\,,\,], \cdot)$ and $(A^*, -\ad_{[,]}^*, L_\cdot^*)$,
	and $T: A \rightarrow A$ be a linear map.
	Then
	\begin{enumerate}[label=(\roman*)]
		\item\label{it:c1}
		   $T$ is an extended $\mathcal{O}$-operator of weight 0 with extension $\id$ of mass $(-1,0)$ on $(A, [\,,\,], \cdot)$ associated to $(V, \rho, \zeta)$ if and only if $r^T-\tau(r^T) \pm (r^\id + \tau(r^\id))$ is a solution of the PYBE in $(A \ltimes_{-\ad_{[,]}^*, L_\cdot^*} A^*, \courant{\,,\,}, \bullet)$.

		\item\label{it:c2}
		   $T$ is a Rota-Baxter operator of nonzero weight $\lambda$ on $(A, [\,,\,], \cdot)$ if and only if both $(r^T-\tau(r^T)) + \lambda r^\id$ and $(r^T-\tau(r^T)) - \lambda\tau(r^\id)$ are solutions of the PYBE in $(A \ltimes_{-\ad_{[,]}^*, L_\cdot^*} A^*, \courant{\,,\,}, \bullet)$.
	\end{enumerate}
\end{corollary}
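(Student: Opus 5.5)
Part (i) is the specialization of Corollary~\ref{cor:tsc}\ref{it:tcts1} to $V=A$ with the adjoint module $(A,\ad_{[,]},L_\cdot)$ and $S=\id$. First I will record that $\id$ is a balanced $A$-module homomorphism associated to $(A,\ad_{[,]},L_\cdot)$. It is trivially an $A$-module homomorphism. It is balanced because $[a,b]=-[b,a]$ and $a\cdot b=b\cdot a$. Hence the hypotheses of Theorem~\ref{thm:eo2smp} hold with $(V,\rho,\zeta)=(A,\ad_{[,]},L_\cdot)$ and $S=\id$. Taking $\kappa=-1$, the mass $\frac{\kappa+1}{4}$ vanishes, so the EPYBE reduces to the PYBE, and (i) follows at once.

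For (ii), I will reduce to (i) by rescaling. Put $T'=\frac{2}{\lambda}T+\id$, i.e. $T=\frac{\lambda}{2}(T'-\id)$, which is legitimate since $\lambda\neq 0$. The corollary following Corollary~\ref{coro:eoqadj}, taken with weight $0$ and the sign that gives weight $2$, says that $T'$ satisfies Eqs.~\eqref{eq:bpal}--\eqref{eq:bpaa} with $\lambda=0$ if and only if $T'-\id$ is a Rota-Baxter operator of weight $2$. Those two equations with $\lambda=0$ are exactly the statement that $T'$ is an extended $\mathcal{O}$-operator of weight $0$ with extension $\id$ of mass $(-1,0)$.

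I will then check that $T'-\id=\frac{2}{\lambda}T$ is Rota-Baxter of weight $2$ if and only if $T$ is Rota-Baxter of weight $\lambda$. This follows from the scaling rule: $P$ is Rota-Baxter of weight $\lambda$ iff $cP$ is Rota-Baxter of weight $c\lambda$. By (i), $T$ is Rota-Baxter of weight $\lambda$ if and only if $r^{T'}-\tau(r^{T'})\pm(r^{\id}+\tau(r^{\id}))$ solve the PYBE.

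Next, since $\mathbf{C}$ and $\mathbf{A}$ are homogeneous quadratic, $r$ solves the PYBE iff $\frac{\lambda}{2}r$ does. I will therefore multiply by $\frac{\lambda}{2}$. Using $r^{T'}=\frac{2}{\lambda}r^T+r^{\id}$ and linearity of $T\mapsto r^T$:
\begin{align*}
\tfrac{\lambda}{2}\big(r^{T'}-\tau(r^{T'})+r^{\id}+\tau(r^{\id})\big)&=r^T-\tau(r^T)+\lambda r^{\id},\\
\tfrac{\lambda}{2}\big(r^{T'}-\tau(r^{T'})-r^{\id}-\tau(r^{\id})\big)&=r^T-\tau(r^T)-\lambda\tau(r^{\id}).
\end{align*}
These are precisely the two tensors in the statement, which gives (ii).

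The main obstacle is bookkeeping of signs and scalings. I must make sure the chosen sign in the Baxter-type corollary indeed yields weight $+2$ for $T'-\id$ (rather than $T'+\id$ with weight $-2$). If it comes out the other way, I will instead use $T'=-\frac{2}{\lambda}T-\id$. A cleaner alternative, if the signs resist, is to verify directly that $\frac{2}{\lambda}T+\id$ is an extended $\mathcal{O}$-operator of mass $(-1,0)$ by expanding $[T'(a),T'(b)]-T'([T'(a),b]+[a,T'(b)])$. In that expansion the cross terms cancel, leaving $\frac{4}{\lambda^2}\big([T a,T b]-T([Ta,b]+[a,Tb]+\lambda[a,b])\big)-[a,b]$, and similarly for the product $\cdot$.
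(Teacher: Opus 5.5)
Your proof is correct, and part (i) is exactly the paper's argument. For (ii) you take a different, slightly longer route than the paper.

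The paper does not rescale $T$. It observes directly that $T$ is a Rota-Baxter operator of weight $\lambda$ if and only if $T+\frac{\lambda}{2}\id$ is an extended $\mathcal{O}$-operator of weight $0$ with extension $\frac{\lambda}{2}\id$ of mass $(-1,0)$. This is a one-line check: the cross terms cancel, and the leftover $-\frac{\lambda^2}{4}[a,b]$ is exactly $\kappa[\frac{\lambda}{2}a,\frac{\lambda}{2}b]$ with $\kappa=-1$. The paper then applies Corollary~\ref{cor:tsc}\ref{it:tcts1} with $S=\frac{\lambda}{2}\id$, which is also a balanced module homomorphism. The two tensors $(r^T-\tau(r^T))+\lambda r^{\id}$ and $(r^T-\tau(r^T))-\lambda\tau(r^{\id})$ then come out with no further manipulation.

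You instead keep the extension equal to $\id$ so that you can reuse part (i). That forces the normalization $T'=\frac{2}{\lambda}T+\id=\frac{2}{\lambda}\bigl(T+\frac{\lambda}{2}\id\bigr)$. It also forces a detour through the Baxter-type corollary and the scaling rule for Rota-Baxter weights. Finally you need the quadratic homogeneity of $\mathbf{C}$ and $\mathbf{A}$ to undo the factor $\frac{2}{\lambda}$ on the tensors. Every step checks out:
\begin{itemize}
\item the sign is right, since $T'-\id$ is Rota-Baxter of weight $+2$;
\item your direct expansion giving $\frac{4}{\lambda^2}\bigl([Ta,Tb]-T([Ta,b]+[a,Tb]+\lambda[a,b])\bigr)-[a,b]$ is correct;
\item both rescaled tensors match the statement.
\end{itemize}

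The trade-off is this. The paper's choice of extension $\frac{\lambda}{2}\id$ is shorter and never divides by $\lambda$, so it even covers $\lambda=0$ trivially. Your route uses $\lambda\neq 0$ essentially, which is harmless here because the statement assumes it.
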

\begin{proof}
	\ref{it:c1}. 
	The conclusion follows from Corollary~\ref{cor:tsc} \ref{it:tcts1} by taking $S = \id$.
	
	\ref{it:c2}.
	Note that $T: A \to A$ is a Rota-Baxter operator of weight $\lambda$ on $(A, [\,,\,], \cdot)$ if and only if $T + \frac{\lambda}{2}\id$ is an extended $\mathcal{O}$-operator of weight $0$ with extension $\frac{\lambda}{2}\id$ of mass $(-1, 0)$ on $(A, [\,,\,], \cdot)$ associated to $(A, \ad_{[,]}, L_{\cdot})$.
	Therefore, the conclusion follows from Corollary~\ref{cor:tsc} \ref{it:c1}.
\end{proof}

\begin{example}
	Let $(A, [\,,\,], \cdot)$ be the 3-dimensional Poisson algebra given in Example~\ref{HesPois} with basis $\{e_1,e_2,e_3\}$ whose non-zero bracket $[\,,\,]$ and multiplication $\cdot$ are given by Eq.~\eqref{HPCF1}, and
	$P: A \to A$ be the Rota-Baxter operator of weight $1$ defined by Eq.~\eqref{HPCF1RB}.
	Let $\{e_1^*, e^*_2, e_3^*\}$ be the dual basis of $\{e_1,e_2,e_3\}$. Then Corollary~\ref{cor:tscx} \ref{it:c2} shows that both 
	\begin{align*}
		r^T - \tau(r^T) + r^\id &=  2e_1 \otimes e_1^* + 3e_2 \otimes e_2^* + \frac{3}{2}e_3 \otimes e_3^*  - e_1^* \otimes e_1 - 2e_2^* \otimes e_2 - \frac{1}{2}e_3^* \otimes e_3,\\	
				r^T - \tau(r^T) - \tau(r^\id) &=  e_1 \otimes e_1^* + 2e_2 \otimes e_2^* + \frac{1}{2}e_3 \otimes e_3^* - 2e_1^* \otimes e_1 - 3e_2^* \otimes e_2 - \frac{3}{2}e_3^* \otimes e_3,	
	\end{align*}
	are solutions of the PYBE in $(A \ltimes_{-\ad_{[,]}^*, L_\cdot^*} A^*, \courant{\,,\,}, \bullet)$, where non-zero bracket $\courant{,}$ and multiplication $\bullet$ are given Eq.~\eqref{HesPoisSP}.
\end{example}


\bigskip




\begin{thebibliography}{99}
	
\bibitem{arnol2013mathematical}
V.~I. Arnol'd, \emph{Mathematical methods of classical mechanics}, Springer Science \& Business Media, 2013.

\bibitem{bai2013splitting}
C.~Bai, O.~Bellier, L.~Guo and X.~Ni, Splitting of operations, Manin products and Rota-Baxter operators, \emph{Int. Math. Res. Not.} 3 (2013), 485--524.


\bibitem{bai2010nonabelian}
C.~Bai, L.~Guo and X.~Ni, Nonabelian generalized {Lax} pairs, the classical {Yang-Baxter} equation and {post-Lie} algebras, \emph{Commun. Math. Phys.} 297 (2010), 553--596.

\bibitem{bai2011generalizations}
C.~Bai, L.~Guo and X.~Ni, Generalizations of the classical {Yang-Baxter} equation and $\mathcal{O}$-operators, \emph{J. Math. Phys.} 52 (2011).

\bibitem{bai2012O}
C.~Bai, L.~Guo and X.~Ni, $\mathcal{O}$-operators on associative algebras and associative {Yang-Baxter} equations, \emph{Pacific J. Math.} 256 (2012), 257--289.

\bibitem{chari1995guide}
V.~Chari and A.~Pressley, \emph{A guide to quantum groups}, Cambridge University Press, 1995.

\bibitem{chen2024postpoisson}
S.~Chen, C.~Bai and L.~Guo, Solving the Poisson Yang-Baxter equation via deformation-to-quasiclassical-limits, \emph{arXiv e-prints}, arXiv:2404.11232v2, 2024.

\bibitem{dirac2013lectures}
P.~A. Dirac, \emph{Lectures on quantum mechanics}, Courier Corporation, 2013.

\bibitem{dotsenko2021endofunctors}
V.~Dotsenko and P.~Tamaroff, Endofunctors and Poincaré-Birkhoff-Witt Theorems, \emph{Int. Math. Res. Not.} 16 (2021), 12670--12690

\bibitem{drinfeld1986quantum}
V.~Drinfeld, \emph{Quantum groups}, Proc. Int. Congr. Math., 1986, 798--820.

\bibitem{drinfeld1983hamiltonian}
V.~G. Drinfeld, Hamiltonian structures on {Lie} groups, {Lie} bialgebras and the geometric meaning of the classical {Yang-Baxter} equations, \emph{Soviet Math. Dokl.} 27 (1983), 68--71.

\bibitem{ginzburg2004poisson}
V.~Ginzburg and D.~Kaledin, Poisson deformations of symplectic quotient singularities, \emph{Adv. Math.} 186 (2004), 1--57.

\bibitem{huebschmann1990poisson}
J.~Huebschmann, Poisson cohomology and quantization, \emph{J. Reine Angew. Math.} 408 (1990), 57--113.

\bibitem{kontsevich2003deformation}
M.~Kontsevich, Deformation quantization of {Poisson} manifolds, \emph{Lett. Math. Phys.} 66 (2003), 157--216.

\bibitem{kupershmidt1999classical}
B.~A. Kupershmidt, What a classical $r$-matrix really is, \emph{J. Nonlinear Math. Phys.} 6 (1999), 448--488.

\bibitem{lin2023differential}
Y.~Lin, X.~Liu and C.~Bai, Differential antisymmetric infinitesimal bialgebras, coherent derivations and Poisson bialgebras, \emph{Symmetry Integrability Geom. Methods Appl.} 19 (2023), 018.

\bibitem{lin2026quasitriangular}
Y.~Lin and D.~Lu, Quasitriangular and factorizable {Poisson} bialgebras, \emph{Pacific J. Math.} 343 (2026), 453--483.



 
 \bibitem{LBS2020}
 J. Liu, C. Bai and Y. Sheng, Noncommutative Poisson bialgebras, \emph{ J. Algebra} 556 (2020), 35--66. 

\bibitem{loday2007algebra}
J.-L. Loday, On the algebra of quasi-shuffles, \emph{Manuscripta Math.} 123 (2007), 79--93.

\bibitem{loday2004trialgebras}
J.-L. Loday and M.~Ronco, Trialgebras and families of polytopes, \emph{Contemp. Math.} 346 (2004), 369--398.

\bibitem{ni2013poisson}
X.~Ni and C.~Bai, Poisson bialgebras, \emph{J. Math. Phys.} 54 (2013), 023515.

\bibitem{odzijewicz2011hamiltonian}
A.~Odzijewicz, Hamiltonian and quantum mechanics, \emph{Geom. Topol. Monogr} 17 (2011), 385--472.

\bibitem{ospel2022polarization}
C.~Ospel, F.~Panaite and P.~Vanhaecke, Polarization and deformations of generalized dendriform algebras, \emph{J. Noncommut. Geom.} 16 (2022), 561--594.

\bibitem{polishchuk1997algebraic}
A.~Polishchuk, Algebraic geometry of {Poisson} brackets, \emph{J. Math. Sci.} 84 (1997), 1413--1444.

\bibitem{semenov1983classical}
M.~Semenov-Tyan-Shanskii, What is a classical $r$-matrix?, \emph{Funct. Anal. Appl.} 17 (1983), 259--272.

\bibitem{vaisman2012lectures}
I.~Vaisman, \emph{Lectures on the geometry of {Poisson} manifolds}, Birkh{\"a}user, 2012.

\bibitem{vallette2007homology}
B.~Vallette, Homology of generalized partition posets, \emph{J. Pure Appl. Algebra} 208 (2007), 699--725.

\bibitem{vallette2007Maninp}
B.~Vallette, Manin products, Koszul duality, Loday algebras and Deligne conjecture, \emph{J. reine angew. Math.} 620 (2008), 105--164.



\bibitem{weinstein1977lectures}
A.~Weinstein, \emph{Lectures on symplectic manifolds}, American Mathematical Soc., 1977.

\bibitem{yu2026extended}
J.~Yu and Y.~Hong, Extended $\mathcal{O}$-operators, {Novikov} {Yang-Baxter} equations and {post-Novikov} algebras, \emph{J. Nonlinear Math. Phys.} 33 (2026), 14.

\end{thebibliography}

\end{document}